\newcommand{\les}{\lesssim}

\def\Qr{\mbox{Qr}}

\def\Div{{\bf Div}}

\def\Bund{\underline{B}}
\def\kh{\hat{k}}

\documentstyle[amssymb,amsfonts]{amsart}

\newenvironment{proof}{\noindent {\bf Proof} }{\endprf\par}
\def \endprf{\hfill  {\vrule height6pt width6pt depth0pt}\medskip}
\def\emph#1{{\it #1}}
\def\textbf#1{{\bf #1}}
\newcommand{\bea}{\begin{eqnarray}}
\newcommand{\eea}{\end{eqnarray}}
\def\beaa{\begin{eqnarray*}}
\def\eeaa{\end{eqnarray*}}

\def\ba{\begin{array}}
\def\ea{\end{array}}
\def\be#1{\begin{equation} \label{#1}}
\def \eeq{\end{equation}}

\newcommand{\nn}{\nonumber}

\def\a{{\alpha}}
\def\b{{\beta}}
\def\ga{\gamma}
\def\Ga{\Gamma}
\def\de{\delta}
\def\De{\Delta}
\def\ep{\epsilon}

\def\la{\lambda}

\def\si{\sigma}
\def\Si{\Sigma}
\def\om{\omega}

\def\ze{\zeta}
\def\nab{\nabla}

\def\vphi{\varphi}
\def\aa{{\underline{\a}}}
\def\bb{{\underline{\b}}}
\def\bb{{\underline{\b}}}

\def\Lb{{\underline{L}}}

\newcommand{\trchb}{\tr \chib}
\newcommand{\chih}{\hat{\chi}}
\newcommand{\chib}{\underline{\chi}}

\newcommand{\etab}{\underline{\eta}}

\def\chih{\hat{\chi}}
\def\trch{\mbox{tr}\chi}
\def\tr{\mbox{tr}}
\def\Tr{\mbox{Tr}}
\def\Xb{{\underline X}}
\def\Yb{{\underline Y}}

\def\Null{\dot{\NN}^{-}}

\def\MM{{\cal M}}
\def\NN{{\cal N}}
\def\II{{\cal I}}
\def\FF{{\cal F}}
\def\EE{{\cal E}}

\def\JJ{{\cal J}}

\def\Lie{{\cal L}}

\def\RR{{\cal R}}

\def\GG{{\cal G}}

\def\TT{{\cal T}}

\def\A{{\bf A}}
\def\B{{\bf B}}
\def\D{{\bf D}}
\def\F{{\bf F}}
\def\G{{\bf G}}

\def\J{{\bf J}}
\def\M{{\bf M}}
\def\L{{\bf L}}

\def\Q{{\bf Q}}
\def\R{{\bf R}}

\def\U{{\bf U}}

\def\T{{\bf T}}
\def\g{{\bf g}}
\def\m{{\bf m}}
\def\t{{\bf t}}

\def\RRR{{\Bbb R}}

\def\SSS{{\Bbb S}}

\def\piT{{\,^{(\T)}\pi}}

\def\na{\overline{\nabb}}
\newcommand{\nabb}{\mbox{$\nabla \mkern-13mu /$\,}}

\def\B{{\bf B}}

\def\lap{\Delta}
\def\pr{\partial}

\def\c{\cdot}

\renewcommand{\div}{\mbox{div }}
\newcommand{\curl}{\mbox{curl }}

\def\err{\mbox{Err}}
\newcommand{\lapp}{\mbox{$\bigtriangleup  \mkern-13mu / \,$}}
\def\f14{{\frac{1}{4}}}
\def\f12{{\frac{1}{2}}}

\newcommand{\piX}{\,^{(X)}\pi}

\def\dual{{\,^\star\, \mkern-3mu}}
\def\2{{\overline 2}}

\parindent = 0 pt
\parskip = 12 pt

\begin{document}
\theoremstyle{plain}
  \newtheorem{theorem}[subsection]{Theorem}
  \newtheorem{conjecture}[subsection]{Conjecture}
  \newtheorem{proposition}[subsection]{Proposition}
  \newtheorem{lemma}[subsection]{Lemma}
  \newtheorem{corollary}[subsection]{Corollary}

\theoremstyle{remark}
  \newtheorem{remark}[subsection]{Remark}
  \newtheorem{remarks}[subsection]{Remarks}

\theoremstyle{definition}
  \newtheorem{definition}[subsection]{Definition}

\include{psfig}

\include{psfig}
\title[Curvature ]{On the breakdown criterion in General Relativity }
\author{Sergiu Klainerman}
\address{Department of Mathematics, Princeton University,
 Princeton NJ 08544}
\email{ seri@@math.princeton.edu}

\author{Igor Rodnianski}
\address{Department of Mathematics, Princeton University, 
Princeton NJ 08544}
\email{ irod@@math.princeton.edu}
\subjclass{35J10\newline\newline
The first author is partially supported by NSF grant 
DMS-0070696. The second author is partially 
supported by NSF grant DMS-0702270.
}
\begin{abstract}
We give a  geometric criterion for the breakdown of an Einstein vacuum  space-time foliated by a constant mean curvature, or maximal, foliation. 
More precisely we show that the foliated space-time can be extended as long as the the second fundamental form and  the first  derivatives of the logarithm of the  lapse of the foliation remain uniformly bounded. We make no  restrictions on the size of the initial data.
\end{abstract}
\maketitle
\vspace{-0.3in}
\section{Introduction}

This paper is concerned with the problem of a {\it geometric} criterion 
for breakdown of solutions $(\M,\g)$ of the vacuum Einstein equations. 
\be{eq:Einst-vacuum}
\R_{\a\b} (\g)=0.
\end{equation}
To describe the problem we assume that a  part of space-time $\MM_*\subset \M$ is foliated 
by the level hypersurface of a time function $t$, monotonically
increasing towards future,  with lapse $n$ and 
second fundamental form $k$ defined by,
\be{eq:def-k-n}
k(X,Y)=-\g(\D_X \T,Y),\quad \qquad  n=\big(-\g(\D t, \D
t)\big)^{-1/2}
\end{equation}
where  $\T$ is the future unit normal to $\Si_t$, $\D$ 
 is the space-time  covariant derivative associated with $\g$, and $ X,Y $ are tangent to $\Si_t$
and . Let $\Si_0$ be a fixed leaf  of the $t$ foliation,
 corresponding to $t=t_0$.  We shall refer to $\Si_0$
as initial slice. 
We assume
that the space-time region $\MM_*$ is globally hyperbolic, i.e. every
 causal curve from a point $p\in \MM_*$  intersects  $\Si_0$ at
 precisely one point. We  also  assume that the initial slice 
verifies the following assumption.

{\bf A 1.}\quad 
There exists a finite  covering 
of $\Si_0$ by a finite number of   charts $U$ such
that for any fixed chart, the induced metric $g$  verifies
\be{eq:assum-2}
 \De_0^{-1}|\xi|^2\le g_{ij}(x) \xi_i\xi_j \le \De_0|\xi|^2, \qquad \forall
x\in U
\end{equation}
with $\De_0$ a fixed positive number.

  We consider the
 following two situations:
\begin{enumerate}
\item
The surfaces $\Si_t$ are asymptotically flat and  maximal. 
$$\tr k = 0.$$

\item
The surfaces $\Si_t$ are compact,  of Yamabe type $-1$, and of 
 constant, negative mean curvature.   They form 
what is called   a (CMC) foliation .
 $$tr k=t, \qquad t<0$$
\end{enumerate}

Though our methods apply equally well to both situations
we shall only consider here the 
    latter case,  which is somewhat easier to treat due to the
compactness of the level surfaces $\Si_t$. We shall thus assume in what follows that
the region $\MM_*$   is equal to $\cup_{t\in[t_0, t_*)}\Si_t$, with $t_*<0$.
We can also assume that the initial hypersurface $\Si_0$ corresponds to  $t_0= -1$.

{\bf Remark}.\quad 
In the second case the CMC conjecture asserts that it should be possible to extend 
the foliation, in a smooth manner, to 
its maximal allowed value $\tr k=t=0$, see \cite{And} and references therein.

Given $p\in \MM_*$
 we can define a point-wise norm $|\Pi(p)|$ of any 
space-time tensor $\Pi$ via decomposition 
$$
X=- X^0 \T + \underline X,\quad X\in T{\cal M},\quad \underline X\in T\Si_t
$$
  We denote by $\|\Pi(t)\|_{L^p}$ the $L^p$ norm of $\Pi$
on $\Si_t$. More precisely,
\beaa
\|\Pi(t)\|_{L^p}=\int_{\Si_t}|\Pi|^p dv_g
\eeaa
with $dv_g$ the volume element of the metric $g$ of $\Si_t$.
The main result of this paper is the following theorem.

\begin{theorem}[Main theorem]
\label{thm:main}
Let $(\M,\g)$ be a globally hyperbolic development of $\Si_{0}$ foliated  by the CMC
level hypersurfaces of a time function
$t<0$, such that  $\Si_0$  corresponds 
to   the level surface $ t=t_0$.  Assume that $\Si_0$ verifies  {\bf A1}. Then the first time $T_*<0$, with respect to
 the $t$-foliation,
  of a breakdown is characterized by the condition 
\be{eq:criterion}
\lim\sup_{t\to T_*^-} \big (\, \|k(t)\|_{L^\infty}+ \|\nab \log n(t)\|_{L^\infty}\big )=\infty
\end{equation}
More precisely the space-time together with the foliation $\Si_t$ can be extended
beyond any value $t_*<0$ for which,
\be{eq:criterion-finite}
 \sup_{t\in[t_0, t_*)}\|k(t)\|_{L^\infty}+ \|\nab \log n(t)\|_{L^\infty}=\De_0<\infty
\end{equation}
\end{theorem}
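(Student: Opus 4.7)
My plan is to prove the theorem by contradiction: I will show that if \eqref{eq:criterion-finite} holds, then every higher Sobolev norm of the geometric data on $\Si_t$ remains bounded up to $t=t_*$, so standard local existence (Choquet-Bruhat, valid for initial data with bounded $H^s$ for $s>5/2$) allows me to extend the foliation past $t_*$. Concretely, the goal is to propagate a bound of the form $\|\R(t)\|_{L^2(\Si_t)}+\|\nab k(t)\|_{L^2(\Si_t)}+\|\nab^2 \log n(t)\|_{L^2(\Si_t)}\le C(\De_0)$ together with control of the volume radius / injectivity radius of $\Si_t$, and then bootstrap to higher regularity.

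The structural backbone is a three-legged stool. The first leg is elliptic: on each CMC leaf the lapse solves $\lap n = n|k|^2 - n\, \pr_t\tr k = n|k|^2 - n$, and the constraints $\R^{(g)}-|k|^2+(\tr k)^2=0$, $\divv k=0$ relate $\nab k$ and intrinsic curvature of $\Si_t$ to $k$ itself; with the pointwise control given by \eqref{eq:criterion-finite} these yield higher Sobolev bounds on $n,k$ in terms of the curvature $\R$ and the geometry of the slice. The second leg is hyperbolic: I will run Bel-Robinson energy estimates for $\R$. Setting $\QQ=\QQ(\R)$, divergence-freeness in vacuum and the Stokes identity on the slab $\cup_{t_0\le t\le t_1}\Si_t$ give
\be{eq:BelRob}
\int_{\Si_{t_1}}\QQ(\T,\T,\T,\T)\,dv_g \;=\;\int_{\Si_{t_0}}\QQ(\T,\T,\T,\T)\,dv_g \;+\;\int_{t_0}^{t_1}\!\!\int_{\Si_t}\QQ\cdot \piT\,dv_g\,dt,
\eeq
where the deformation tensor $\piT$ is an algebraic expression in $k$ and $n^{-1}\nab n$. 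Since \eqref{eq:criterion-finite} bounds $\piT$ in $L^\infty$, Gronwall closes the $L^2$ estimate on $\R$. The third leg is geometric: to upgrade to $H^1$ control on $\R$ one commutes Bianchi with a well-chosen collection of vector fields $\{\T, O_{(i)}, S\}$ whose deformation tensors are, in turn, controlled by one extra derivative of $k$ and $\log n$, so that the estimates can be iterated.

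Closing the iteration forces me to control the deformation tensors of the commuting fields, which contain terms like $\nab k$ and $\nab^2 \log n$ that the hypothesis does \emph{not} bound pointwise. The recovery is through elliptic and transport equations on $\Si_t$, but those in turn require good trace and Sobolev inequalities on $\Si_t$ with constants depending only on a lower bound for the volume radius. Thus a preliminary step is to show that the volume radius $r_{\rm vol}(\Si_t,1)$ stays bounded below on $[t_0,t_*)$, using \eqref{eq:criterion-finite} to control the metric via $\pr_t g = -2nk$ and the initial assumption \textbf{A1}; this keeps all Sobolev/Calderon-Zygmund constants uniform.

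I expect the main obstacle to be the step where Sobolev embedding is too weak to handle the commutator error terms generated at the $H^2$ level of the curvature — specifically, a term schematically of the form $\nab\R \cdot \nab\piT$ that cannot be closed by Cauchy-Schwarz alone. Overcoming this requires a genuinely geometric replacement for the standard Sobolev machinery: one must construct outgoing geodesic null cones $\NN^-(p)$ based on points of $\MM_*$, prove a lower bound on the null radius of conjugacy, and use a Kirchoff-Sobolev parametrix on these cones to represent $\R$ (or a commuted version) in terms of controlled quantities. This is the deepest piece of the argument, and where most of the work in the sequel to this paper will necessarily be concentrated; once this sharp null-geometric bound is in place, the three-legged stool above closes and the extension of the foliation beyond $t_*$ follows.
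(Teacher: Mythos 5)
Your plan reproduces the paper's architecture in its essentials: a continuation argument via a CMC local existence theorem whose time of existence depends only on quantities you propagate; Bel--Robinson energy estimates closed by Gronwall because \eqref{eq:criterion-finite} bounds $\piT$ in $L^\infty$; uniform control of the slice geometry and Sobolev constants from $\pr_t g=-2nk$ and {\bf A1}; elliptic (Hodge-type) estimates on $\Si_t$ recovering derivatives of $k$ and $n$ from curvature; and, as the deep step, an $L^\infty$ bound on $\R$ obtained from a Kirchoff--Sobolev parametrix on past null cones $\NN^{-}(p)$, with lower bounds on their radius of injectivity and with the curvature flux as the controlling quantity. This is exactly the route the paper takes, so in outline the proposal is sound.

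The one leg that differs, and as stated would not work, is your higher-derivative step: commuting Bianchi with $\{\T, O_{(i)}, S\}$. Rotation and scaling fields are adapted to (nearly) asymptotically flat spacetimes; on compact CMC slices of Yamabe type $-1$, with large data, no such approximate symmetries exist, and their deformation tensors would require precisely the derivatives of $k$ and $\nab\log n$ that the hypothesis does not give. The paper avoids this entirely: it derives the tensorial wave equation $\square\R=\R\star\R$ from Bianchi and runs energy estimates for the wave operator acting on tensors (using the Riemannian metric $h$), obtaining bounds for $\|\D\R(t)\|_{L^2}$ and $\|\D^2\R(t)\|_{L^2}$ in which the only uncontrolled factor is $\|\R\|_{L^\infty}$. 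Consequently the obstruction is not a commutator term $\nab\R\cdot\nab\piT$ but the quadratic term itself, and the reason the flux suffices to handle it is a structural fact you do not mention: by a signature count no null component of $\R\star\R$ is quadratic in $\aa$, so $|\R\star\R|\les|\R|^\dagger\,|\R|$ with $|\R|^\dagger$ (the $\aa$-free part) controlled by the flux through $\NN^{-}(p)$; without this bilinear structure the parametrix argument does not close. Two further points to repair: a lower bound on the radius of \emph{conjugacy} is not enough --- one must also exclude short intersecting null geodesics, i.e.\ bound the null radius of \emph{injectivity} (this is the content of the companion paper, and it uses {\bf A1}, {\bf A2} and $\RR_0$); and the CMC lapse equation is $\lap n=|k|^2 n-1$ (since $\pr_t\tr k=1$), not $n|k|^2-n$, which is what gives the uniform two-sided bound on $n$ used throughout.
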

Condition \eqref{eq:criterion-finite} can be reformulated in terms
of the deformation tensor  of the  future unit normal $\T$, $\pi=\piT=\Lie_\T\g$. 
By a simple calculation, expressed relative to an orthonormal frame 
$e_0=\T, e_1,e_2, e_3$, we find,
\be{eq:comp-pi}
\pi_{00}=0,\quad\pi_{0i}=n^{-1} \nab_i n,\quad \pi_{ij}=-2k_{ij}.
\end{equation}
Consistent with the statement of the main theorem we assume that $\T$
is an approximate Killing vectorfield in the following sense,

{\bf A2}.\quad There exists a constant $\De_0$ such that,
\be{eq:criterion-finite-pi}
\sup_{t\in [t_0, t_*)}\|\pi(t)\|_{L^\infty}\le \De_0
\end{equation} 
In addition to the constant $\De_0$ in {\bf A1}, {\bf A2} we introduce another
constant $\RR_0$ which plays an important role in the proof, which bounds the $L^2$ norm
of the spacetime curvature tensor $\R$ on $\Si_0$,

\be{eq:const-R0}
\|\R(t_0)\|_{L^2(\Si_0)}\le \RR_0
\end{equation}

To prove our main theorem we have to  show that if assumptions {\bf A1} and {\bf A2}
 are satisfied then  
the space-time $\MM_*$
 can be extended beyond $t_*$. We want to emphasize that theorem \ref{thm:main}
is a large data resulty; indeed we make no smallness assumptions on the constants
$\De_0$ and $\RR_0$.

Our theorem is connected and partially  motivated by the following three  earlier
 breakdown criteria  results:

 {\bf  1.}\quad  The first is a result of M. Andersson, \cite{And},  who showed that a
breakdown can
 be tied to the condition that 
 $$
 \lim\sup_{t\to t_*^-} \, \|\R(t)\|_{L^\infty}=\infty.
 $$
 Our result can be viewed as complimentary. It is clear however that 
 the condition \eqref{eq:criterion} is formally weaker as it refers only to 
 the second fundamental form $k$ and the lapse $n$ which requires one degree
 less of differentiability. Moreover a condition on the  boundedness
 of the $L^\infty$ norm of $\R$ 
exhausts  all the  dynamical degrees of 
freedom of the equations. Indeed, once we know that  $\|\R(t)\|_{L^\infty}$
is finite, one can  find bounds for  $n$, $\nab n$  and $k$
on $\Si_t$ purely by elliptic estimates. This is certainly  not true in our   case.
 
 \vskip 1pc

 \vskip 1pc
{\bf 2.}\quad  Our result can be also compared to  the well known  Beale-Kato-Majda,
\cite{BKM},
 criterion for  breakdown of solutions of the incompressible Euler equation
 $$
\pr_t  v + (v\c \nab ) v =-\nab p,\qquad 
\div v =0,
 $$
with smooth initial data at $t=t_0$.
A routine application of the energy estimates shows that solution $v$ blows up 
if and only if 
\be{eq:BKM1}
\int_{t_0}^{t_*} \|\nab v(t)\|_{L^\infty} dt =\infty.
\end{equation}
 The Beale-Kato-Majda improves the blow up criterion by replacing it with the 
 following  condition on  the vorticity $\omega=\curl v$:
 \be{eq:BKM2}
\int_{t_0}^{t_*} \|\omega (t)\|_{L^\infty} dt =\infty.
 \end{equation}
 To relate $\nab v$ and $\om$  one observes that  
$$
\div v =0, \quad \curl v=\omega
$$
 forms an elliptic system for $v$  in terms of $\om$. Thus $\nab v$ can be expressed in
terms  $\omega$ via a singular integral operator, i.e.  
a zero order pseudodifferential operator:
 \be{eq:BKM3}
 \nab v = P^0 (\omega).
 \end{equation}
 Although $P^0$ does not define a bounded map $L^\infty\to L^\infty$ it can 
 be shown that \eqref{eq:BKM3} is sufficient to reduce 
 the breakdown condition \eqref{eq:BKM1} to the more  satisfying one \eqref{eq:BKM2},
 in terms of the vorticity alone.
 
 Similarly, in the case of the Einstein equations energy estimates,
expressed relative to  a special system of coordinates ( such as wave coordinates),
 show 
 that breakdown does not occur unless 
 $$
 \int_{t_0}^{t_*} \|\pr\g(t)\|_{L^\infty} dt =\infty.
 $$
 This condition however is not geometric as it depends on the choice of a full
 coordinate  system. Observe that  both the spatial derivatives of the lapse $\nab n$
 and the components of the second fundamental form,  $
 k_{ij}=-\f 12 n^{-1}\, \pr_t g_{ij},  $ can be viewed   
 as components  of $\pr\g$. 

 Note however that after prescribing $k$ and $\nab n$ 
we are still left with many  more  degrees of freedom 
in determining $\pr \g$. The fundamental difficulty
that one needs to overcome is that of deriving 
bounds for $\R$ using only bounds for  $\|\nab\log  n(t)\|_{L^\infty}+\|k(t)\|_{L^\infty}$
and geometric informations   on the initial hypersurface $\Si_0$. Clearly
this cannot be done by elliptic estimates alone.
 Thus, as opposed to  both  the results of M. Anderson and  Beale-Kato-Majda,
it is far less obvious that a condition such as 
\eqref{eq:criterion} can cover all 
 {\it dynamic} degrees of freedom of the Einstein equations.
Despite the formal similarity with  
 the previous results  mentioned above,  the proof
of Theorem \ref{thm:main}
 requires a  conceptually different treatment.

{\bf 3.}\quad Finally, the result whose proof  is closest in spirit
 to  ours
and which has  played the main motivating role 
in  developing our approach, 
is the proof of global regularity 
 of solutions of the Yang-Mills equations in ${\Bbb R}^{3+1}$ by Eardley and Moncrief,
 see \cite{EM1}, \cite{EM2}.
 To explain the connection of their  result to  ours
 we review  below its main ideas.

Recall that the curvature tensor $\F_{\a\b}$ of a Yang Mills connection
 $\A_\a dx^\a$, with values in the Lie algebra $su(N)$ is a critical point 
 of the Yang-Mills functional 
 $$
 YM[\F]=\int_{{\Bbb R}^{3+1}} \Tr \,\big (\dual \F\wedge \F\,\big ) 
 $$
and verifies the wave equation,
\be{eq:YM1}
\square_{(\la)} \F= \F\star \F,
\end{equation}
where $\square_{(\la)}$ denotes the covariant
wave operator,
 \beaa
\square_{(\la)}\F&=&\D^\a \D_\a \F=\square \F+[\la, \pr \F]+[\pr \la, \F]+[\la,[\la,\F]],
\eeaa
 $\square $  denotes the usual D'Alembertian in $\RRR^{3+1}$ and $\D_a =\pr_\a+[\la_\a, \c]$
  the gauge  covariant derivative.
Since the Minkowski space-time  metric 
$$
\m=-dt^2 + \de_{ij} dx^i dx^j
$$
is static (in particular $n=1$ and $k=0$) the energy of $\F$ associated with 
the energy-momentum tensor $\Q[\F]_{\a\b}=\F_{\a}^{\,\, \la} \F_{\b\la}+\dual \F_{\a}^{\,\,
\la}\dual \F_{\b\la} $  and vectorfield $\T=\pr_t$ is conserved. 
In particular, the flux of energy $\FF_p$
through the null boundary  $\NN^{-}(p)$ of the domain of dependence $\JJ^{-}(p)$
of an arbitrary  point $p$  can be bounded by the energy of the initial
data which we denote by $I_0$.  We assume that smooth data 
for $F$ is prescribed at $t=0$
and restrict $\JJ^{-}(p)$ and $\NN^{-}(p)$ to $t\ge 0$.  We
recall that the flux has the form,
$
 \FF_p=  \big(\int_{\NN^{-}(p)} \Q[\F](L,\T)\,\big)^{1/2}
$
with 
$L=-\pr_t+\pr_r$ the null
 geodesic generator of $\NN^{-}(p)$ normalized by  the condition $<L,\T>=1$.

The proof of the global regularity of solutions of the Yang-Mills equations 
is based on  the boundedness   of the flux  $\FF_p\le I_0< \infty$. Here are
a summary of the main steps.

{\bf 1.}\quad   Rewrite \eqref{eq:YM1} in the form
$\square \F=\F\star \F-(\square_{(\la)}-\square)\F.$
 Using  the explicit representation, in $\RRR^{3+1}$,  of solutions
to  the inhomogeneous wave equation,
we deduce, for all points $p$, with $t>0$,
\bea
\F(p)&=& (4\pi)^{-1}\int_{\NN^{-}(p;\de)} r^{-1} \F \star \F+\F^{(0)}(p;\de)
\label{eq:YM-form}\\ & -&
(4\pi)^{-1}\int_{\NN^{-}(p;\de)} r^{-1}(\square_{(\la)}-\square)\F \nn .
\eea
Here $\NN^{-}(p,\de)$ represents the portion
of the null cone $\NN^{-}(p)$ included in the time slab $[t(p)-\de, t(p))$,
with $t(p)$ the value of  the time parameter at $p$. Also   $r$ is the distance,
in euclidean sense, to the vertex $p$ and $\F^{(0)}(p;\de)$ represents
an  homogeneous solution to  the wave equation whose initial data at $t=t(p)-\de$
coincide with  those of  $\F$.  

{\bf 2.}\quad Ignore, for a moment,
the presence of the third term on the right hand side of 
\eqref{eq:YM-form}. Using the explicit 
form of the nonlinear term $\F\star \F$ one notices that at least one
component of the product can be estimated by the flux  $\FF_p$ of $\F$ through
the null hypersurface $\NN^{-}(p)$. Denoting  $|\F|=\sum_{\a\b} |\F_{\a\b}|$, we have by  a
simple estimate,
\beaa
\qquad\qquad\qquad|(\F(p)-\F^{(0)}(p;\de)|&\les&  \FF_p
\big(\int_{\NN^{-}(p;\de)}
r^{-2}\big)^{1/2} \,\,\|\F\|_{L^\infty(\JJ^{-}(p;\de))}\\
&\les& \de^{1/2} \FF_p
\|\F\|_{L^\infty(\JJ^{-}(p,\de))}
\eeaa
   where $ \|\F\|_{L^\infty(\JJ^{-}(p,\de))}$ denotes the sup- norm of $|\F|$
for all points in the domain of dependence  $\JJ^{-}(p)$ of $p$
intersected with  the slab $t(p)-\de, t(p)]$. Therefore, we deduce the following,
\begin{lemma}
\label{le:YM2}
 If $\,\, \de^{1/2}\c \FF_p$ is sufficiently small, then for any $t\ge 0$
\be{eq:YM2}
\|\F(t)\|_{L^\infty}\les
 \|\F(t-\de)\|_{L^\infty} +\|\D \F(t-\de)\|_{L^\infty}
\end{equation}
\end{lemma}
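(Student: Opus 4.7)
The plan is to bound each of the three terms on the right-hand side of the representation formula (\ref{eq:YM-form}) separately, and then to exploit the smallness of $\de^{1/2}\FF_p$ to absorb the contribution of the two integral terms into the supremum $\|\F\|_{L^\infty(\JJ^-(p,\de))}$ we want to estimate. For the homogeneous piece $\F^{(0)}(p;\de)$ this is classical: Kirchhoff's formula for the scalar wave equation in $\RRR^{3+1}$ applied to each component yields
$$
|\F^{(0)}(p;\de)| \les \|\F(t(p)-\de)\|_{L^\infty}+\de\,\|\D\F(t(p)-\de)\|_{L^\infty},
$$
which is already of the form appearing on the right of (\ref{eq:YM2}). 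For the nonlinear integral $\int_{\NN^{-}(p;\de)} r^{-1}\F\star \F$, the calculation is the one indicated in the excerpt: null-decomposing $\F$ with respect to the generator $L$ of $\NN^-(p)$, one checks that in the algebraic product $\F\star \F$ at least one factor lies among the null components of $\F$ whose pointwise square is controlled by $\Q[\F](L,\T)$ and hence, after integration over the cone, by $\FF_p^2$; Cauchy--Schwarz against $\int_{\NN^-(p;\de)} r^{-2}\,\les\, \de$ produces the bound $\de^{1/2}\FF_p\,\|\F\|_{L^\infty(\JJ^-(p,\de))}$.

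The main obstacle is the third (connection) term $\int_{\NN^{-}(p;\de)} r^{-1}(\square_{(\la)}-\square)\F$, which involves $[\la,\pr\F]$, $[\pr\la,\F]$ and $[\la,[\la,\F]]$; a naive estimate would need $\|\pr\F\|_{L^\infty}$, defeating the purpose of the lemma. To handle it I would fix the Cronstrom (radial) gauge centered at the vertex $p$, in which $(q-p)^\a\la_\a(q)\equiv 0$ and the connection admits the explicit representation
$$
\la_\a(q)=\int_0^1 s\,\F_{\a\b}\bigl(p+s(q-p)\bigr)(q-p)^\b\,ds,
$$
giving $|\la(q)|\les r(q)\,\|\F\|_{L^\infty(\JJ^-(p,\de))}$ and, after differentiating and invoking the Bianchi identity $\D^\a\F_{\a\b}=0$, comparable control of $\pr\la$ directly in terms of $\F$ (and $\D\F$ along the radial segment back to the initial slab). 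These bounds convert the connection integral into an expression structurally identical to the nonlinear one: the extra powers of $r$ produced by $\la$ are harmless against the $r^{-1}$ weight on the narrow conical region $\NN^-(p;\de)$, and the same null decomposition together with Cauchy--Schwarz again delivers a bound of the form $\de^{1/2}\FF_p\,\|\F\|_{L^\infty(\JJ^-(p,\de))}$ (the cubic term $[\la,[\la,\F]]$ is strictly smaller in $\de$).

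Adding the three estimates at an arbitrary $p$ with $t(p)\in[t-\de,t]$ produces
$$
|\F(p)|\les \|\F(t-\de)\|_{L^\infty}+\|\D\F(t-\de)\|_{L^\infty}+\de^{1/2}\FF_p\,\|\F\|_{L^\infty(\JJ^-(p,\de))}.
$$
A short continuity/bootstrap argument in the variable $t(p)$, starting from the trivial bound at $t(p)=t-\de$, allows us to take the supremum over $p$ and, provided $\de^{1/2}\FF_p$ is sufficiently small, to absorb the last term into the left-hand side. The resulting inequality is exactly (\ref{eq:YM2}).
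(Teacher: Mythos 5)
Your handling of the homogeneous term and of the nonlinear term $\int_{\NN^{-}(p;\de)} r^{-1}\F\star\F$ (null decomposition, one factor controlled by the flux, Cauchy--Schwarz against $\int r^{-2}\les \de$) is exactly the paper's argument. The gap is in the connection term, and specifically in the piece $\int_{\NN^{-}(p;\de)} r^{-1}[\la,\pr\F]$. You invoke the Cronstr\"om gauge only to obtain pointwise bounds $|\la|\les r\,\|\F\|_{L^\infty(\JJ^{-}(p,\de))}$ (and a claimed bound on $\pr\la$), and then assert that these bounds make the connection integral ``structurally identical'' to the nonlinear one. They do not: after inserting a pointwise bound on $\la$ you are still left with $\int_{\NN^{-}(p;\de)}|\pr\F|$ over the cone, and neither the flux $\FF_p$ (which controls only quadratic expressions in $\F$ itself, not its derivatives) nor $\|\F\|_{L^\infty(\JJ^{-}(p,\de))}$ controls $\pr\F$ there. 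This is precisely the obstruction you identified at the start, and a sup bound on $\la$ does not remove it.

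The actual role of the Cronstr\"om condition $(x-y)^\a\la_\a=0$ is geometric, not merely quantitative: on $\NN^{-}(p)$ the position vector $(x-y)$ is the null generator, so the condition forces $\la$ to be orthogonal to $L$ and hence \emph{tangent} to the cone; therefore $\la\c\pr$ is a tangential derivative along $\NN^{-}(p;\de)$ and one may integrate by parts \emph{within the cone integral}, transferring the derivative from $\F$ onto $\la$, the weight $r^{-1}$ and the measure. Only after this integration by parts does the term reduce to expressions in $\F$ (and controlled quantities) that can be estimated like $\F\star\F$, using $|\la(q)|\les r\,\|\F\|_{L^\infty(\JJ^{-}(p,\de))}$. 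Your proposal omits this step entirely; without it the estimate $\de^{1/2}\FF_p\,\|\F\|_{L^\infty(\JJ^{-}(p,\de))}$ for the connection term does not follow. (A secondary weakness: differentiating the radial representation of $\la$ produces $\pr\F$ along the segment, so the asserted control of $\pr\la$ ``directly in terms of $\F$'' also needs the equations and more care; but the decisive missing idea is the tangentiality of $\la\c\pr$ and the integration by parts on $\NN^{-}(p;\de)$.) The final absorption/bootstrap step you describe is fine and is how the smallness of $\de^{1/2}\FF_p$ is meant to be used.
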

{\bf 3.}\quad 
   Arguing recursively and using the standard local existence theorem
 for the Yang-Mills system\footnote{In a given gauge such as   the Coulomb gauge}, 
 one can find    bounds for all components of the curvature tensor\footnote{Once bounds are
established for  $F(p)$ one can proceed in the same
manner to derive bounds for derivatives of $\F$ at $p$.} 
$\F(p)$ 
 depending only on the fact that  $\FF_p$ is  uniformly bounded
 and the initial data
 data $\F(0)$ is
smooth.

{\bf 4.}\quad One can show that \eqref{eq:YM2}
remains true even as we take into consideration the presence
of the third term in \eqref{eq:YM-form}. Consider for example
 what could be, potentially, the most dangerous term,
\beaa
\int_{\NN^{-}(p;\de)} r^{-1} \la \c \pr \F.
\eeaa
Here we have to hope that we can integrate by parts
to transfer the derivative from $\F$ to $\la $. This can only be done if 
$\la\c\pr $ is tangential to the light cone $\NN^{-}(p)$.  Miraculously,  this 
 can be achieved  by  taking $\la$ in  
 the Cronstr\"om gauge, that is
one assumes that the conection 1-form $\la$  satisfies
\be{eq:YM3}
 (x-y)^\a \la_\a=0,
\end{equation}
where $x^\a$ are the space-time coordinates of $p$ 
and $y^\a$ those of a point $q\in \NN^-(p)$.
With this choice, after the integration by parts, one can
 treat all  the remaining terms
 in $(\square_{(\la)}-\square ) \F$ roughly  in the same way 
as the main term $F\star F$. To  show  this one  has to observe that 
the value of $\la$ at any point $q$ in the domain of dependence
of $p$ can be estimated by $\|\F\|_{L^\infty(\JJ^{-}(p,\de))}$. This leads
 to the same estimate \eqref{eq:YM2} as stated in the Lemma above.
 
{\bf 5.}\quad In \cite{Kl-Ma}  the global  regularity result was reproved by 
strenghening the classical local existence result to $\la\in H^1(\RRR^3)$
and $E\in L^2(\RRR^3)$, which is at the  same regularity level
as  the energy norm. That required, instead of the pointwise
estimates
\eqref{eq:YM2}, a new generation of $L^4$ type estimates, called bilinear.
The premise of the \cite{Kl-Ma} approach was  the fact that,  once we have
 a local existence result which depends only on the energy norm
of the  initial data, global existence can be easily derived 
by a simple continuation argument.

{\bf 6.}\quad In  \cite{Kirch} we have developed a gauge independent  approach to    the proof of the  Eardley-Moncrief result.  The approach is based on a Kirchoff-Sobolev parametrix for $\square_{(\la)}$, similar to the one we use
in this paper,   which replaces  \eqref{eq:YM-form} by a  
gauge invariant  formula  depending, implicitly\footnote{Through transport equations along the  null boundary of the causal past of $p$},  
only on  the values  of   $\F$  along  $\NN^{-}(p)$.

This  paper was motivated in part by the desire to adapt the Eardley-Moncrief argument\footnote{Adapting \cite{Kl-Ma}
to General Relativity is the goal of the bounded $L^2$-curvature conjecture, see \cite{kl}.}
to General Relativity. The above discussion indicates that the Eardley-Moncrief proof 
relies on two independent ingredients: conservation of energy and pointwise bounds on
curvature, which depend only on the flux  and  initial data. Since the analogue
of the Yang-Mills energy in General Relativity (the Bel-Robinson energy) is not conserved
one can only hope to reproduce the second part of the Eardley-Moncrief argument
 and prove a conditional regularity result
which states, roughly, that smooth solutions of the Einstein
 equations, in vaccum,  remain smooth, and can therefore  be continued, as long as 
an integral quantity, we call the flux of  curvature, remains bounded.
 A  possibility of such a result  first became 
apparent  to us  in a    discussion 
with  V.  Moncrief\footnote{V. Moncrief  has been  independently  pursuing the analogy between the Einstein and Yang-Mills equations  by developing  an integral 
representation of the curvature tensor in General Relativity
based on the Hadamard-Friedlander method (as in 
\cite{Fried}), see  \cite{M}.   }. Such a  result could also be deduced, in principle,
 from the stronger bounded $L^2$-curvature conjecture, 
accoding to which the initial value problem 
is well posed for initial data sets with $L^2$
bounds on  its  curvature. 
In this paper we actually take a step closer to implementing the full analogue of the
Eardley-Moncrief result. Rather then imposing a direct condition on the finiteness of
the Bel-Robinson energy and curvature flux we formulate   conditions (perhaps more
natural albeit more restrictive) which  control the extent to which the energy is 
not conserved. These conditions, which form our breakdown criterion, involve uniform  bounds
on the second fundamental form $k$ and  derivatives of the lapse $n$.

 In what follows we give a short summary 
of how the mains ideas in the proof of 
the Eardley-Moncrief result for Yang-Mills can be adapted to GR.

{\bf 1.}\quad  A  The curvature tensor $\R$
  of a  $3+1$ dimensional vacum spacetime $(\M,\g)$, see \eqref{eq:Einst-vacuum},
verifies a wave equation of the form,
\be{eq:GR1}
\square_\g \R=\R\star\R
\end{equation} 
where  $\square_\g $ denotes
the covariant wave operator $\square_\g =\D^\a\D_\a$.

{\bf 2.}\quad
The  Bel-Robinson energy-momentum tensor has the form 
$$
\Q[\R]_{\a\b\ga\de}=\R_{\a\la\ga\mu}\R_{\b\,\,\de}^{\,\la\,\,\mu}+
\dual\, \R_{\a\la\ga\mu}\dual\,\R_{\b\,\,\de}^{\,\la\,\,\mu}. 
$$
and verifies,
$\D^\de \Q_{\a\b\ga\de}=0.$ It can thus be used to derive energy
and flux estimates for thee curvature tensor $\R$.
As opposed to the case of the Yang-Mills theory,  however,
in General Relativity the background metric is a  dynamic  variable itself  and thus 
does not admit, in general,  Killing fields (and in particular 
a time-like Killing field). 
This means that we can not associate conserved quantities to a divergence
free Bel-Robinson tensor. It is at this point where we need crucially
our  approximate Killing condition  {\bf A2}. Indeed that condition suffices
to  derive bounds for both   energy and  flux associated to the curvature
tensor $\R$. Using the Bel-Robinson energy momentum tensor  $\Q$ the energy associated
to a slice $\Si_t$ is defined by the integral 
\bea
\EE(t)=\int_{\Si_{t_2}}  \Q[\R] (\T,\T,\T,\T)
\eea
while the flux, through the null boundary   $\NN^{-}(p)$ of the domain of
dependence (or causal past)
$\JJ^{-}(p)$ of a point $p$, is given by  the integral
\be{eq:GR-flux}
 \FF^{-}(p)=\big (\int_{\NN^{-}(p)} \Q[\R](L,\T, \T, \T)\big )^{\frac 12}
\end{equation}
where $L$
is the null geodesic generator of $\NN^{-}(p)$ normalized at the vertex $p$
by $<L,\T>=1$.

As in the case of the Yang-Mills equations it is precisely the boundedness of the 
flux of curvature that plays a crucial role in our analysis. In General Relativity 
the flux takes on even more fundamental role as it is also needed  to control 
the geometry of the very object it is defined on, i.e.  the boundary of the causal 
past of $p$. This boundary,  unlike  in  the case of  
  Minkowski space, is not determined a-priori but
 depends in fact on the  space-time we are trying to control.

{\bf 3.}\quad
 In the construction of  a parametrix for \eqref{eq:GR1}
we cannot, in any meaningful way, approximate $\square_\g$ by the flat
D'Alembertian $\square$.  To deduce a formula analogous to \eqref{eq:YM2}
 one might try to proceed by the geometrics optics
construction of parametrices for $\square_\g$, as developed in \cite{Fried}.  
Such an approach would require additional bounds on the background geometry,
determined by the metric $\g$, incompatible with the limited assumption 
{\bf A2}  and   the implied
finiteness of the curvature flux. 
 We   rely  instead  on  a  geometric version, which we develop in
\cite{Kirch}, of
 the  Kirchoff-Sobolev formula,  in the spirit of   that
used by Sobolev in \cite{Sob} and Y. Choquet-Bruhat in
\cite{Br}\footnote{It is extremely important that the error term generated by our parametrix depends only on the geometry of the boundary of the causal past of a point. This feature is  absent in all previous constructions.}.  
Applying that formula to equation  \eqref{eq:GR1} we 
obtain the following analogue 
of the formula \eqref{eq:YM2}:
\bea
\R(p)=-\int_{\NN^{-}(p;\de)}\A\,\c(\R\star\R)  + \EE
+\int_{\NN^{-}(p;\de)} \err\c\R\label{eq:GR2}
\eea
where $\A$ is a $4$-covariant 4-contravariant tensor 
defined as a solution of  a transport equation  along $\NN^{-}(p,\de)$
with appropriate (blowing-up) initial data at the vertex $p$, $\NN^{-}(p;\de)$ denotes the portion 
of the null boundary $\NN^{-}(p)$ in the time interval
 $[t(p)-\de, t(p)]$  and the error term $\err$ depends only on  the
extrinsic  geometry of   $\NN^{-}(p;\de)$. 
The term $\EE$  depends, in principle,  only on the properties of the space-time in the interval $[t(p)-\de, t(p)-\de/2]$. 

{\bf 4.}\quad
As in the Yang-Mills setting the structure of the term $\R\star \R$ allows
us to estimate one of the curvature terms by the flux of curvature:
\bea
|\int_{\NN^{-}(p,\de)}\A\,\c(\R\star\R) |&\les& \FF^-(p)\, \c\, \|\R\|_{L^\infty(\NN^{-}(p,\de))} 
\c\|\A\|_{L^2(\NN^{-}(p,\de))}\label{eq:GR33}\\
&\les& \de^{1/2} \,\c \FF^-(p) \,\c \|\R\|_{L^\infty(\NN^{-}(p,\de))}, \nn
\eea
provided that, 
\be{eq:GR333}\|\A\|_{L^2(\NN^{-}(p;\de))}\les \de^{1/2}.
\end{equation}
Negelecting, for a moment,  the third integral in \eqref{eq:GR2} 
we can thus expect to 
prove a result analogous
 to that  of Lemma \ref{le:YM2},
see  proposition \ref{prop:funny}. 
\begin{theorem}
\label{le:GR3} 
There exists a  sufficiently  small $\de>0$  and a large constant $C$, 
depending only on $\De_0$ in assumptions {\bf A1} and {\bf A2} as well as $\RR_0$ in
\eqref{eq:const-R0} such that for all $t_0\le t<t_*$,
\be{eq:main-sup-estim}
\|\R(t)\|_{L^\infty}\les 
\de^{-1}C\sup_{t-2\de\le  t'\le t-\de/2}\big(\|\R(t')\|_{L^2}+\|\D\R(t)\|_{L^2}+ \|\D^2\R(t)\|_{L^2}\big)
\end{equation}
\end{theorem}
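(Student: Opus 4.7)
The strategy is to apply the geometric Kirchoff--Sobolev parametrix \eqref{eq:GR2} at the point $p$ with truncation parameter $\de$, decomposing $\R(p)$ into the nonlinear integral, the ``initial data'' term $\EE$ on the slice $\{t=t(p)-\de\}\cap \JJ^{-}(p)$, and the error integral $\int_{\NN^{-}(p;\de)} \err\c \R$. The plan is to absorb the first and third terms into the left-hand side using smallness in $\de$, and to represent $\EE$ in terms of $L^2$-norms of $\R$ and its first two derivatives on a slice, via Sobolev embedding on $\Si_{t'}$.

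For the nonlinear term, the approach is exactly the one sketched in \eqref{eq:GR33}: the algebraic structure of $\R\star\R$ guarantees that one factor can always be contracted against $L$ or $\T$ so that it is dominated by the Bel--Robinson flux density $\Q[\R](L,\T,\T,\T)$. Cauchy--Schwarz on $\NN^{-}(p;\de)$ then gives
\[
\Big|\int_{\NN^{-}(p;\de)} \A\c(\R\star\R)\Big|\les \|\A\|_{L^2(\NN^{-}(p;\de))}\, \FF^{-}(p)\, \|\R\|_{L^\infty(\NN^{-}(p;\de))}\les \de^{1/2}\,\FF^{-}(p)\,\|\R\|_{L^\infty(\NN^{-}(p;\de))},
\]
where the bound $\|\A\|_{L^2(\NN^{-}(p;\de))}\les \de^{1/2}$ comes from the transport equation defining $\A$ along $\NN^{-}(p)$, balancing the vertex blow-up of $\A$ against the vanishing of the area element at $p$. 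The curvature flux itself, $\FF^{-}(p)\le C(\De_0,\RR_0)$, is obtained by integrating the Bel--Robinson divergence identity $\D^\de\Q_{\a\b\ga\de}=0$ between $\Si_0$ and the solid null cone; here the deformation-tensor errors are controlled using the approximate-Killing assumption {\bf A2}. Once $\de$ is chosen so that $\de^{1/2}\,C<\f12$, this term is absorbed into $\|\R\|_{L^\infty}$. The error term $\int \err\c \R$ is handled in the same spirit: since $\err$ depends only on connection coefficients of the null foliation of $\NN^{-}(p)$, bootstrap estimates on the cone geometry yield $\|\err\|_{L^2(\NN^{-}(p;\de))}\les \de^{1/2}$ up to a constant depending on $\De_0,\RR_0$, so Cauchy--Schwarz plus the flux bound produces another absorbable contribution.

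The term $\EE$ in \eqref{eq:GR2} is essentially a Kirchoff--Sobolev representation built from the Cauchy data of $\R$ on $\Si_{t'}\cap \JJ^{-}(p)$ for some $t'\in[t(p)-2\de,t(p)-\de/2]$. On this compact $3$-manifold, Sobolev embedding $H^2\hookrightarrow L^\infty$ gives pointwise control of $\EE$ by $\|\R(t')\|_{L^2}+\|\D\R(t')\|_{L^2}+\|\D^2\R(t')\|_{L^2}$. The $\de^{-1}$ on the right-hand side of \eqref{eq:main-sup-estim} is the propagation cost of the transport for $\A$ from the vertex down to the base slice, accounting for its singular data at $p$. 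Combining the three estimates and taking the supremum over $p\in \Si_t$ yields \eqref{eq:main-sup-estim}.

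The main obstacle is the genuinely relativistic difficulty, absent in the Yang--Mills case, of simultaneously controlling $\R$ and the geometry of the very surface $\NN^{-}(p;\de)$ over which one integrates: the null cone is dynamical, and its connection coefficients, injectivity radius, and the transport solution $\A$ must all be estimated in terms of the same curvature flux one is trying to bound. This forces a bootstrap in which uniform bounds on $k$ and $\nab\log n$, together with the initial curvature bound $\RR_0$, are used first to bound the Bel--Robinson energy $\EE(t)$ and the flux $\FF^{-}(p)$, then to control the null geometry, and finally to close the parametrix estimate \eqref{eq:main-sup-estim}. The delicate step is the proof of $\|\A\|_{L^2(\NN^{-}(p;\de))}\les\de^{1/2}$ and $\|\err\|_{L^2(\NN^{-}(p;\de))}\les\de^{1/2}$ uniformly in $p$, which is where the fine structure of the causal past near the vertex enters, and where {\bf A2} is used most crucially.
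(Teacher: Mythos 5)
Your overall architecture coincides with the paper's: the Kirchoff--Sobolev parametrix on $\NN^{-}(p,\de)$, boundedness of the Bel--Robinson flux from {\bf A2}, the bound $\|\A\|_{L^2(\NN^{-}(p,\de))}\les\de^{1/2}$ from the transport equation (blow-up $\sim s^{-1}$ against area $\sim s^2$), the signature/null-decomposition argument showing that one factor in $\R\star\R$ is always flux-controlled, and absorption for small $\de$. The genuine gap is in your treatment of the error term $\int_{\NN^{-}(p,\de)}\err\c\R$. First, the curvature flux controls only the components $\a,\b,\rho,\si,\bb$ of $\R$ along the cone, not $\aa$, so ``Cauchy--Schwarz plus the flux bound'' applied to $\err\c\R$ is not available; in every error estimate the full curvature must be placed in $L^\infty$ and the flux reserved for the distinguished components. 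Second, and more seriously, the error contains the term $\int\big(\lapp\A+\ze\c\nabb\A\big)\c\R$, which is \emph{not} $\de^{1/2}$-small and cannot be absorbed: after integrating by parts on the spheres $S_t$ it is bounded by $\|\nabb\A\|_{L^2(\NN^{-}(p,\de))}\,\|\nabb\R\|_{L^2(\NN^{-}(p,\de))}$ (plus an absorbable $\ze$-term), where $\|\nabb\A\|_{L^2}\le C$ carries no $\de$-smallness and $\|\nabb\R\|_{L^2(\NN^{-})}$ is the flux of the \emph{first derivative} of curvature, $\FF^{(1)}(p,\de)$. This quantity is not controlled by the ordinary curvature flux; it requires a separate energy estimate for $\D\R$ obtained from the wave equation $\square\R=\R\star\R$ integrated over $\JJ^{-}(p,\de)$, and it is exactly this step that produces the $\|\D\R(t-\de)\|_{L^2}$ (and eventually the full $H^2$) terms on the right of \eqref{eq:main-sup-estim}. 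Your sketch has no mechanism generating these terms, and it also omits the Ricci-coefficient estimates ($\trch-2/s$, $\chih$, $\ze$, $\etab$, $\mu$ in terms of the flux) that are needed both for the error terms and for the $\A$, $\nabb\A$ bounds.

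Two further steps are misstated. The term $\EE$ cannot be handled by $H^2\hookrightarrow L^\infty$ on a slice: it contains $\int\D^\a f\,(\A\c\D_\a\R)$, and pointwise control of $\D\R$ would require three derivatives of $\R$ in $L^2$. The paper instead uses Cauchy--Schwarz with $\|\A\|_{L^2}\les\de^{1/2}$, the cutoff bounds $|f'|\les\de^{-1}$, $|f''|\les\de^{-2}$ (this, not the transport of $\A$, is the source of the $\de^{-1}$), and a trace-type integration by parts converting the $L^2(S_{t'})$ norms of $\R,\D\R$ into $\|\R\|_{L^2(\Si_{t'})}+\|\D\R\|_{L^2(\Si_{t'})}+\|\D^2\R\|_{L^2(\Si_{t'})}$. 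Finally, the absorption is not at fixed $t$: what must be absorbed is $\de^{1/2}\sup_{t'\in(t-\de,t)}\|\R(t')\|_{L^\infty}$, so one takes the supremum of the estimate over a $\de$-window in $t$ and uses the Sobolev inequality of Corollary \ref{cor:Sobolev} --- itself nontrivial, resting on the harmonic-coordinate Theorem \ref{thm:harmonic}, since transported coordinates do not suffice --- to convert the earlier-time $L^\infty$ norms into the $H^2$ norms appearing in the statement.
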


{\bf 5.}\quad
 The proof of  \eqref{eq:GR33} 
depends on verifying \eqref{eq:GR333}.
In addition, to estimate the third term in \eqref{eq:GR2}, we need
to provide   estimates for tangential  derivatives of $\A$ and other geometric quantities
asssociated to the null hypersurfaces $\NN^{-}(p)$. In particular, it 
requires showing that $\NN^{-}(p) $ remains a {\it smooth}
(not merely Lipschitz) hypersurface in the time slab $(t(p)-\de, t(p)]$ for some  $\de>0$
   dependent only on  the constants $\De_0$ and  $\RR_0$. 
Thus to prove the desired theorem we have to show that all geometric 
quantities, arising in the parametrix construction,
can be estimated only in terms of the flux of the curvature $\FF_p^{-}$
along $\NN^{-}(p)$ and our main  assumption  {\bf A1}. 
Yet, to start with, it is not even clear
that we can provide a lower bound for the radius of injectivity
of $\NN^{-}(p)$. In other words the congruence of  null geodesics, initiating
at $p$, may not be controllable\footnote{Different null geodesics of the congruence
may intersect, or  the congruence itself may have  conjugate points,  arbitrarily
close to $p$.}
 only in terms of the
curvature flux.  Typicaly, in fact, lower bounds for the  radius of conjugacy
of a null hypersurface in a Lorentzian manifold are only available
 in terms of the sup-norm
of the curvature tensor $\R$ along the hypersurface, while the problem of 
short, intersecting, null geodesics appears not to be fully understood even
in that context. The situation is similar to that in  Riemannian geometry, exemplified 
by the Cheeger's theorem, where 
pointwise bounds on sectional curvature are sufficient to control the radius 
of conjugacy but to prevent the occurrence of short geodesic loops one needs 
to assume in addition an upper bound on the diameter and a lower bound on
the volume of the manifold.

In a sequence of papers, \cite{Causal1}--\cite{Causal3},
 see also \cite{Wang}\footnote{In 
\cite{Causal1}-\cite{Causal3}  we have considered the case 
of the congruence of outgoing future null geodesics initiating
on a $2$-surface  $S_0$ embedded in a 
space-like hypersurface $\Si_0$. 
The extension of our results to null cones from a point forms
the subject of Qian Wang's Princeton 2006 PhD thesis, see \cite{Wang}.}
   we have  proved 
lower bounds on the   geodesic  radius of conjugacy   of null   hypersurfaces. 
 The methods
  developed in those papers  can be  adapted   to  also    prove 
  lower bounds on  the  radius of  conjugacy   with respect to the 
  time  parameter\footnote{The results in  \cite{Causal1}-\cite{Causal3} and \cite{Wang} 
  were proved with respect to the geodesic foliation. In this  paper, as well
  as in \cite{injectivity},
   we rely on an extension     of  these  results 
   to the foliation on $\NN^{-}(p) $ induced by the space-like foliation $\Si_t$.}  $t$.   It may however be possible that  the radius of conjugacy
of the null congruence is bounded from below and yet there are
past null geodesics  form a point $p$ intersecting again at points arbitrarily close  with respect to the time  parameter $t$),    to $p$. In  \cite{injectivity}
we have shown that this cannot happen in a space -time verifying our 
conditions {\bf A1} and {\bf A2}. Thus the combined  results of \cite{Causal1}-\cite{injectivity}  allow us to derive a lower bound on the radius of injectivity  
of  $\NN^{-}(p)$ depending only on  $\De_0$.

{\bf 6.}\quad As in the case of  Yang-Mills equations
one can use the result of Lemma \eqref{le:GR3},
together with the classical local existence result
for the Einstein equations, such as that in \cite{C-K},  to show that solutions can be extended as long
the bounds  on $\piT$  hold true.

Finally we would like to point out possible refinements
of our main theorem \ref{thm:main}. We expect that one should be able to replace 
the pointwise  condition {\bf A2}  with the integral condition,
\be{eq:criterion-conj1}
\int_{t_0}^{t_*}  \|\pi(t)\|_{L^\infty}^2 dt <\infty
\end{equation}
Moreover it may be possible to improve the result even further by
eliminating the term $\nab \log n$ in \eqref{eq:criterion} or \eqref{eq:criterion-conj1}
and requiring instead only a  pointwise bound on $n$.

\section{Constant Mean Curvature foliations}
As described in the introduction  $({\mathcal M}_*, \g)$ denotes
a   Lorentzian  manifold 
of the form 
${\mathcal M}_*=I \times \Si$, where $\Si$ is
a three dimensional, compact, connected,
orientable smooth manifold foliated  by a CMC foliation
$\Si_t$   with lapse  $n$ and second fundamental
 $k$,
$$n=\big(-\g(\D t, \D
t)\big)^{-1/2},\qquad k(X,Y)=\g(\D_X\T, Y)$$
where   $\T$  denotes  the future 
unit normal to $\Si_t$. The time interval $I=[t_0,t_*)$, where $t_0=-1$ and
$t_*<0$.

We decompose a  space-time  vectorfield  $X$ 
 relative to  the unit timelike $\T$,
\be{eq:decomposeX}X=X^0 \T + \underline X,\qquad <\T,\Xb>=0,
\end{equation}
We   define the  positive definite Riemannian metric,
\be{eq:riem-metric}
h(X,Y)=X^0\c Y^0+g(\Xb, \Yb).
\end{equation}
where $g$ denotes the metric induced on $\Si_t$.
We  can also write   \eqref{eq:riem-metric} in the form,
\bea
h_{\a\b}=\g_{\a\b}+2\T_\a\T_\b.
\eea
 Given a space-time tensor $U$ we denote
by $|U|$ its  norm  with respect to the metric $h$.
More precisely, if $U$ is a $m-$ covariant tensor,
\bea
|U|^2=h^{i_1j_1} \cdots h^{i_m j_m}U_{i_1\ldots i_m}U_{j_1\ldots j_m}\label{norm-U}
\eea
The following bound follows immediately from  our main assumption 
\eqref{eq:criterion-finite}, 
\bea
  |\D\T|\  \les  \De_0\label{bound.DT}
\eea
Since  $
\D_\ga h_{\a\b}=2(D_\ga \T_\a\T_\b+\T_\a\D_\ga\T_\b)
$  we have 
 $|\D h|\le 4|\D \T|$. Therefore,
\bea
|\D h|\les \De_0 \label{bound.Dh}
\eea
Also, since the  components  of the deformation tensor 
$\pi=\piT=\Lie_\T \g$ are given by,
 \beaa
\pi_{00}=0,\quad\pi_{0i}=n^{-1} \nab_i n,\quad \pi_{ij}=n^{-1}\pr_t
g_{ij}=-2k_{ij},
\eeaa
we   have, 
\bea
|\piT|\les   \De_0\label{bound.pi}
\eea

Given two tensors $U,V$   we  shall   denote  by $U\c V$
any tensor which is obtained from the tensor product of $U$ and $V$
by taking contractions with respect
to the space-time  metric $\g$. Clearly,
$$|U\c V|\le |U|\c |V|.$$

For any coordinate chart ${\cal O}$, with coordinates $x=(x^1, x^2, x^3)$,
we denote by $(x^0=t, x^1, x^2, x^3)$ the transported coordinates
on  $I\times {\cal O}$ obtained by following the integral
curves of $\T$.
In these coordinates the metric $\g$ 
 takes the 
form 
\be{eq:g-transported}
\g=-n^2 dt^2 + g_{ij} dx^i dx^j,
\end{equation}
  Relative
to these coordinates
$t,x$ we have the equations,
 \bea
\pr_t g_{ij}&=&-2nk_{ij}\label{eq:prt-g}\\
\pr_t k_{ij}&=&-\nab_i\nab_j n
+n(R_{ij}+\tr_g k k_{ij}- 2k_{ia}
k^{a}\,_{j})\label{prt-k}
\eea
with $R_{ij}$ the Ricci curvature of 
of the induced metric $g$ on
 $\Si_t$.  We also have the constraint
equations,
\bea
R-|k|^2+(\tr k)^2&=&0\label{eq:constr1}\\
\nab^j k_{ij}=\nab_i \tr k
\label{eq:constr2}.
\eea

In view of the constant mean curvature condition
on the foliation $\Si_t$ we can always reparametrize $t$ so that
\be{eq:CMC}
\tr_g k=t.
\end{equation}
As mentioned in the introduction we can assume that the initial hypersurface $\Si_0$ 
corresponds to the value $t=t_0=-1$.
In view of  \eqref{prt-k}, 
\eqref{eq:constr1} and \eqref{eq:CMC} we deduce the lapse
equation,
\be{eq:lapse}
\lap n=|k|^2 n-1
\end{equation}
At a point $p$  of minimum for $n$
we must have $|k|^2 n-1\ge 0$.
Therefore, at $p$
$n\ge |k(p)|^{-2}$. On the other hand,
since $|k|^2 =|\kh|^2 +\frac{1}{3}(\tr
k)^2$, at a point of maximum   we
have,
$|\kh|^2n+\frac{1}{3}( \tr k)^2n-1\le 0$.
Therefore,
\begin{equation}\label{eq:lapse-ineq}
\frac{1}{\|k(t)\|_{L^\infty}^2}\le n\le
\frac{3}{t^2}.
\end{equation}
Observe also that, since $ \pr_t \log  (\det g)=-2n \tr k=-2nt$,
\beaa
\frac{d}{dt}|\Si_t|=\frac{d}{dt}\int_{\Si_0}\sqrt{\det g} dx
= -\int_{\Si_{0}}nt \sqrt{\det g}dx.
\eeaa
where $|\Si_t|$ denotes the volume of the compact manifold $\Si_t$.
Thus,
\beaa
0\le \frac{d}{dt}|\Si_t|\le 3|\Si_t||t|^{-1}
\eeaa
As a consequence of \eqref{eq:CMC} the ratio of the
 volumes of  $|\Si_t|$ and $|\Si_0|$ can be estimated 
by,
\beaa
1\le \frac{|\Si_t|}{|\Si_{0}|}\le   \frac{|t_0|^3}{|t|^3}
\eeaa

Therefore, since $t_0=-1$, we have proved,
\begin{proposition} \label{prop:max-princ}
 For all $-1=t_0\le t< t_*<0$ we have the bounds,
\be{eq:max-n}
\frac{1}{\|k(t)\|_{L^\infty}^2}\le n\le
\frac{3}{t^2}
\end{equation}
Moreover, if $|\Si_t|$ denotes the volume of $\Si_t$ and $\Si_0=\Si_{t_0}$,
\be{eq:vol-t}
|\Si_0|\le|\Si_t|\le   \frac{1}{|t|^3}|\Si_0|\les |t|^{-3}
\end{equation}
\end{proposition}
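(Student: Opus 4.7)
The proposition is essentially a maximum principle on the lapse combined with the first variation of volume, both following directly from identities already derived above the statement. The plan has three ingredients: derive the elliptic lapse equation, apply the scalar maximum principle, and integrate a linear ODE inequality for the volume.

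First I would establish the lapse equation \eqref{eq:lapse}, $\lap n = |k|^2 n - 1$. Contracting the evolution equation \eqref{prt-k} with $g^{ij}$ and using $\pr_t g^{ij} = 2nk^{ij}$ (a consequence of \eqref{eq:prt-g}) gives $\pr_t \tr k = -\lap n + nR + n(\tr k)^2$. The Hamiltonian constraint \eqref{eq:constr1} substitutes $R = |k|^2 - (\tr k)^2$ and the CMC condition \eqref{eq:CMC} gives $\pr_t \tr k = 1$, collapsing everything to the stated semilinear elliptic equation. Next, at a point $p$ of minimum of $n$ on $\Si_t$ one has $\lap n(p)\ge 0$, whence $|k(p)|^2 n(p)\ge 1$ and hence $n(p)\ge \|k(t)\|_{L^\infty}^{-2}$; by minimality this bound holds on all of $\Si_t$. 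At a point of maximum, $\lap n\le 0$, so using the orthogonal decomposition $|k|^2=|\kh|^2+\tfrac{1}{3}(\tr k)^2\ge \tfrac{1}{3}t^2$ we get $\tfrac{t^2}{3}n\le 1$, i.e.\ $n\le 3/t^2$ globally on $\Si_t$. This yields \eqref{eq:max-n}.

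For the volume estimate, I would work in the transported coordinates in which $|\Si_t|=\int_{\Si_0}\sqrt{\det g}\,dx$, and use \eqref{eq:prt-g} together with $\tr k=t$ to compute $\pr_t\log\sqrt{\det g}=-n\tr k=-nt$. Since $n>0$ and $t<0$, the integrand is nonnegative, so $\frac{d}{dt}|\Si_t|\ge 0$, giving the lower bound $|\Si_t|\ge |\Si_0|$. Inserting the upper bound $n\le 3/t^2$ yields the linear ODE inequality $\frac{d}{dt}|\Si_t|\le 3|\Si_t|/|t|$, and integration from $t_0=-1$ to $t<0$ produces $|\Si_t|\le |t|^{-3}|\Si_0|\les |t|^{-3}$, which is \eqref{eq:vol-t}. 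The only step that requires genuine computation is the derivation of the lapse equation; after that, the proof is just the scalar maximum principle and a one-line Gr\"onwall-type integration, with no analytic obstacle to speak of.
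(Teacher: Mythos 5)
Your proposal is correct and follows the same route as the paper: derive the lapse equation $\lap n=|k|^2n-1$ from \eqref{prt-k}, \eqref{eq:constr1} and \eqref{eq:CMC}, apply the maximum principle at the minimum and maximum of $n$ (using $|k|^2=|\kh|^2+\frac13(\tr k)^2$ for the upper bound), and integrate $\pr_t\log\det g=-2nt$ with the bound $n\le 3/t^2$ to control $|\Si_t|$. The only difference is that you spell out the contraction argument for the lapse equation, which the paper leaves implicit.
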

\subsection{Coordinate estimates}
 
We  recall  the following lemma, see lemma 2.2   in  \cite{injectivity}.
\begin{lemma} \label{le:initial-rho} If $\Si_0$ is compact and verifies {\bf A1}
of the introduction,  there
must exist a number $\rho_0>0$ such that every point $y\in \Si_0$  admits a neighborhood
$B$, included in a neighborhood chart   $U$, such that $B$ is precisely the Euclidean 
ball $B=B_{\rho_0}^{(e)}(y)$ relative to the local  coordinates  in $U$.
\end{lemma}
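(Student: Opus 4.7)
The plan is a standard compactness/shrinking argument relying only on the finite covering of $\Si_0$ provided by \textbf{A1}; the quantitative bound \eqref{eq:assum-2} is not actually needed for this lemma (it is reserved for later estimates). Denote the finite cover by $U_1,\dots,U_N$, with coordinate maps $\varphi_i\colon U_i\to \RRR^3$. The statement is purely topological: one must produce a uniform radius which works, in the Euclidean sense of some chart containing $y$, for every $y\in \Si_0$ at once.

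The main technical step is to pass from the given cover to a ``shrunken'' cover $V_1,\dots,V_N$ of $\Si_0$ satisfying $\overline{V_i}\subset U_i$ for every $i$. This is supplied by the shrinking lemma for finite open covers of a normal space, applicable because $\Si_0$ is compact Hausdorff. Once the $V_i$ are fixed, the image $\varphi_i(\overline{V_i})$ is a compact subset of the open set $\varphi_i(U_i)\subset \RRR^3$, so the Euclidean distance
$$
\rho_i\ :=\ \mathrm{dist}^{(e)}\bigl(\varphi_i(\overline{V_i}),\ \RRR^3\setminus \varphi_i(U_i)\bigr)
$$
is strictly positive. Set $\rho_0:=\min_{1\le i\le N}\rho_i>0$.

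To conclude, given any $y\in \Si_0$ choose an index $i$ with $y\in V_i$ and take $U=U_i$. Working in the chart coordinates of $U$, the Euclidean ball $B^{(e)}_{\rho_0}(y)$ is contained in $B^{(e)}_{\rho_i}(y)\subset \varphi_i(U_i)$, and pulling back through $\varphi_i^{-1}$ yields a neighborhood $B\subset U$ of $y$ which, expressed in the coordinates of $U$, coincides with $B^{(e)}_{\rho_0}(y)$. The only mild conceptual subtlety is that the notion of ``Euclidean ball'' depends on the chosen chart; the lemma is therefore an existence statement for the pair $(U,B)$, and the construction above furnishes such a pair uniformly with radius $\rho_0$.
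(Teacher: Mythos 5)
Your proof is correct. Note that the paper itself gives no argument for this lemma: it simply refers to Lemma 2.2 of the cited paper on the radius of injectivity, so there is nothing in-text to compare against; your compactness argument is the standard one that such a reference would contain. The two ingredients you use -- the shrinking lemma for a finite open cover of the compact (hence normal) manifold $\Si_0$, and positivity of the Euclidean distance from the compact set $\varphi_i(\overline{V_i})$ to the closed set $\RRR^3\setminus\varphi_i(U_i)$ -- do yield a uniform $\rho_0=\min_i\rho_i>0$, and for $y\in V_i$ the pull-back of the coordinate ball of radius $\rho_0$ about $\varphi_i(y)$ is a neighborhood of $y$ contained in $U_i$ which in the chart coordinates is exactly $B^{(e)}_{\rho_0}(y)$, as the lemma demands. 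Two harmless points worth stating explicitly: if $\varphi_i(U_i)=\RRR^3$ the distance is $+\infty$, which causes no trouble in taking the minimum; and the ball is of course centered at the coordinate image $\varphi_i(y)$, which is how the lemma's phrase ``relative to the local coordinates in $U$'' is to be read. Your observation that the quantitative bound \eqref{eq:assum-2} in {\bf A1} is not needed here is also accurate -- it only enters later, when coordinate balls must be compared with metric (geodesic) balls; the present lemma uses only the finiteness of the atlas.
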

\begin{proof}:\quad 
See the proof of Lemma 2.2. in \cite{injectivity}. 
\end{proof}
Next   we  recall    the result of proposition  4.1. in  \cite{injectivity}.
\begin{proposition}
\label{prop:compare-metrics}
 If  assumptions  {\bf A1} and {\bf A2} are verified,
 then
there exists a large constant $C=C(\De_0)$ such that,
\be{eq:assum-2-at-t}
 C^{-1}|\xi|^2\le g_{ij}(t,x) \xi^i\xi^j \le  C|\xi|^2, \qquad \forall
x\in U
\end{equation}
\end{proposition}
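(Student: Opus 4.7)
The plan is a Gronwall argument on the pointwise evolution of the metric in transported coordinates. Fix a chart $U \subset \Si_0$ in which A1 holds and push it forward along the integral curves of $\pr_t = n\T$ as in the set-up of \eqref{eq:g-transported}. For any $x \in U$ and any vector $\xi \in \mathbb{R}^3$, consider the scalar
\begin{equation*}
F(t) := g_{ij}(t,x)\,\xi^i\xi^j.
\end{equation*}
The evolution equation \eqref{eq:prt-g} immediately gives $\dot F(t) = -2\,n(t,x)\,k_{ij}(t,x)\,\xi^i\xi^j$, so what remains is to close a differential inequality for $\log F$.

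To do so, diagonalize the symmetric tensor $k$ with respect to $g$ at the fixed point $(t,x)$: in a $g$-orthonormal frame the eigenvalues $\lambda_a$ of $k$ satisfy $\max_a|\lambda_a| \le |k|_g$, which yields the pointwise bound
\begin{equation*}
|k_{ij}\xi^i\xi^j| \le |k|_g\, g_{ij}\xi^i\xi^j = |k|_g\, F(t).
\end{equation*}
Assumption A2, transcribed via \eqref{bound.pi} together with the identity $\pi_{ij} = -2k_{ij}$ and the fact that $h_{ij}=g_{ij}$ on spatial indices, gives $|k|_g \les \De_0$, while Proposition \ref{prop:max-princ} bounds the lapse by $n \le 3/t^2$. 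Combining these estimates produces
\begin{equation*}
\Bigl|\frac{d}{dt}\log F(t)\Bigr| \les \frac{\De_0}{t^2}.
\end{equation*}
Integrating from $t_0=-1$ to $t \in [t_0, t_*)$ and exponentiating yields
\begin{equation*}
F(t_0)\,e^{-C\De_0(|t|^{-1}-1)} \le F(t) \le F(t_0)\,e^{C\De_0(|t|^{-1}-1)}
\end{equation*}
for some absolute constant $C$. The initial comparison $\De_0^{-1}|\xi|^2 \le F(t_0) \le \De_0|\xi|^2$ supplied by A1 then closes the argument, the exponential factor being uniformly bounded on $[t_0, t_*)$ by a constant depending only on $\De_0$ (with any $|t_*|^{-1}$ dependence absorbed into the final $C$).

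The main obstacle is quantitative rather than conceptual: although A2 controls the geometric norm $|k|_g$ cleanly, the right-hand side of \eqref{eq:prt-g} is weighted by the lapse $n$, and the only pointwise upper bound on $n$ available through the elliptic equation \eqref{eq:lapse} degenerates like $t^{-2}$ as $t\to 0^{-}$. The saving feature is that this degeneration is integrable in time on every strict subinterval $[t_0, t_*)$ with $t_*<0$, so the Gronwall exponent remains finite. One should also verify that passing from the tensor bound $|k|_g \le \De_0$ to $|k_{ij}\xi^i\xi^j| \le |k|_g\, F$ introduces no circularity: this step is pure linear algebra at a fixed point and requires no a priori comparison of $g$ with the Euclidean metric.
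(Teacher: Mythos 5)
Your proposal is correct and follows essentially the same route as the paper: transported coordinates, the evolution equation $\pr_t g(X,X)$ for a time-independent spatial vector, the pointwise bound $|k(X,Y)|$ by $|k|_g$ times $g(X,X)$, and a Gronwall integration using that $\|n k(t)\|_{L^\infty}$ is integrable in $t$ on $[t_0,t_*)$ (via $|k|\les\De_0$ from A2 and $n\le 3/t^2$ from Proposition \ref{prop:max-princ}), closed with the initial comparison from A1. The only cosmetic difference is that you make the lapse bound and the resulting $|t_*|^{-1}$-dependence of the Gronwall exponent explicit, which the paper leaves implicit.
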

\begin{proof}:\quad For convenience  we  reproduce  the proof  given
in \cite{injectivity}.
 We fix a coordinate chart $ U$  and consider
 the transported coordinates $t,x^1,x^2,x^3$ on $I\times U$. Thus 
$
\pr_t g_{ij} = -\frac 12 n\, k_{ij}.
$
Let $X=X$ be a time-independent vector on $\M$ tangent
to $\Si_t$. Then,
$$
\pr_t g(X,X) = -\frac 12 n\, k(X,X).
$$
Clearly,
\beaa
|n k(X,X)|\le|nk|_g|X|_g^2\le \|nk(t)\|_{L^\infty} |X|_g^2
\eeaa
with $|k|_g^2=g^{ac} g^{bd}k_{ab} k_{cd}$ and $|X|_g^2=X^i X^j g_{ij}=g(X,X)$.
Therefore, since $\pr_t |X|_g^2=\pr_t g(X,X)$,
$$
-\frac 12  \|n k(t)\|_{L^\infty} |X|_g^2 \le \pr_t |X|_g^2 \le \frac 12   \|n
k(t)\|_{L^\infty}  |X|_g^2. 
$$
Thus, 
\beaa
 |X|_{g_0} e^{-\int_{t_0}^t\|nk(\tau)\|_{L^\infty}  d\tau} \le |X|^2_{g_t} \le  |X|_{g_0}
e^{\int_{t_0}^t\|nk(\tau)\|_{L^\infty}d\tau}
\eeaa
from which \eqref{eq:assum-2-at-t} immediatley follows. 
\end{proof}
\subsection{ Sobolev inequalities}
The properties of local  transported coordinates
 established in the previous section  can be used to prove the following
Sobolev  inequality for scalar functions.
\begin{proposition} Assume assumptions {\bf A1} and {\bf A2} verified. There
exists a constant $C$ depending only on  $\De_0$ such for every smooth
scalar function on $\Si_t$, $t_0\le t<t^*$  such 
that
\be{eq:Sob-scalar1}
\|f\|_{L^3(\Si_t)}\le  C\big(\|\nab f\|_{L^1(\Si_t)}+\|f\|_{L^1(\Si_t)}\big)
\end{equation}

\end{proposition}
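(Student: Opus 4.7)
The plan is to reduce the claimed inequality on $\Si_t$ to a classical Sobolev inequality on a Euclidean domain in $\mathbb{R}^3$ through a finite localization argument, exploiting the uniform comparability of the induced metric $g_t$ with the flat metric in transported coordinate charts.

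First I would fix a finite cover of $\Si_0$ by coordinate neighborhoods $U_\a$, each containing the Euclidean ball $B^{(e)}_{\rho_0}(y_\a)$ provided by Lemma \ref{le:initial-rho}. Following the integral curves of $\T$ transports these charts to each leaf $\Si_t$ for $t\in[t_0,t_*)$, and by Proposition \ref{prop:compare-metrics} the induced metric in these coordinates satisfies the uniform two-sided bound \eqref{eq:assum-2-at-t} with constants depending only on $\De_0$. In particular, intrinsic $L^p$ norms and intrinsic gradients on $\Si_t$ are comparable to their Euclidean counterparts, volumes are equivalent up to a uniform factor, and the number of charts can be chosen to depend only on $\De_0$. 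On top of this cover I would fix a smooth partition of unity $\{\chi_\a\}$ built from standard bump functions supported in $B^{(e)}_{\rho_0}(y_\a)$, so that both $\chi_\a$ and its coordinate derivatives are uniformly bounded.

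Next, decompose $f = \sum_\a \chi_\a f$ and apply the triangle inequality on the left-hand side, reducing to an estimate for a single $\chi_\a f$ supported in one chart. Pushing $\chi_\a f$ forward to a compactly supported function on $\mathbb{R}^3$ via the coordinate map, the metric comparison lets me replace intrinsic norms by Euclidean ones at the cost of universal constants. I would then apply the classical Gagliardo--Nirenberg--Sobolev inequality on $\mathbb{R}^3$ to the push-forward, use the product rule $|\nab(\chi_\a f)|\les |\nab f|+|f|$ together with the uniform bound on $|\nab\chi_\a|$ to reassemble the right-hand side in the form $\|\nab f\|_{L^1(\Si_t)}+\|f\|_{L^1(\Si_t)}$, and finally sum over the finitely many charts.

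The only step in which anything non-trivial happens is the uniform two-sided control of $g_t$ in transported coordinates for all $t\in[t_0,t_*)$, which is exactly what Proposition \ref{prop:compare-metrics} provides. Given that input, the construction of the partition of unity with bounds depending only on $\De_0$ is immediate from the uniform geometry of the cover, and matching coordinate $L^p$ norms to intrinsic $L^p$ norms is a direct consequence of the metric comparison. Thus the real obstacle has already been handled upstream in Proposition \ref{prop:compare-metrics}, and the proof of this Sobolev inequality itself is essentially bookkeeping on top of the standard Euclidean result.
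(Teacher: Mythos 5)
Your proposal is correct and takes essentially the same route as the paper: localize with a partition of unity in the transported coordinate charts, use Proposition \ref{prop:compare-metrics} to compare $g_t$ with the Euclidean metric uniformly in $t$, and then invoke the Euclidean $W^{1,1}$ Sobolev inequality, which the paper simply reproves in the chart by the slicing-and-H\"older argument instead of citing it. The only caveat is the exponent: the classical Euclidean inequality you cite (and the paper's own chart computation) gives control of the $L^{3/2}$ norm, not the $L^3$ norm, so the ``$L^3$'' in the statement should be read as $L^{3/2}$ --- this is how the inequality is actually applied in Corollary \ref{corr:Sob}, and the $L^3$ version with only $L^1$ norms on the right would fail by scaling on small balls.
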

\begin{proof}:\quad By a partition of unity we may assume that  
 $f$   has compact support in  a local chart   $V=\Si_t\cap (I\times U)$ of
 transported coordinates  
 $ t, x=(x^1,x^2, x^3) $.
 Then, writing 
$$f(x)=\int_{-\infty}^{x^1}\pr_1 f(y,x^2, x^3) dy=\int_{-\infty}^{x^2}\pr_2 f(x^1,y, x^3)
dy =\int_{-\infty}^{x^3}\pr_1 f(x^1,x^2,y)
dy,$$
\beaa
|f(x)|^{3/2}
&\le&\bigg(\int_{-\infty}^{x^1}|\pr_1 f(y,x^2, x^3)| dy\c
 \int_{-\infty}^{x^2}|\pr_2 f(x^1,y, x^3)| dy \int_{-\infty}^{x^3}|\pr_1
f(x^1,x^2,y)\big| dy\bigg)^{1/2}
\eeaa
Thus, by H\"older,
\beaa
\int_V |f(x)|^{3/2}dx\le(\int_V |\nab f(x)| dx)^{3/2}
\eeaa
Therefore,  since in view of \eqref{eq:assum-2-at-t} we have $C^{-3/2}\le \sqrt{|g|} \le
C^{3/2}$,
$$\big(\int_V |f(x)|^{3/2}\sqrt{|g|}  dx\big)^\f12\les
 \int_V |\nab f(x)|\sqrt{|g|} dx.$$
 which proves \eqref{eq:Sob-scalar1} as desired. Inequality \eqref{eq:Sob-scalar1}
is an immediate consequence of \eqref{eq:Sob-scalar1}.
\end{proof}
\begin{corollary}\label{corr:Sob} For any smooth  tensorfield $F$ on $\Si_t$ and
 any $2\le p\le 6$,
\be{eq:Sob-scalar2}
\|F\|_{L^p(\Si_t)}\le C\big( \|\nab
F\|_{L^2(\Si_t)}^{3/2-3/p}\|F\|_{L^2(\Si_t)}^{3/p-1/2}+\|F\|_{L^2(\Si_t)}\big)
\end{equation}
\end{corollary}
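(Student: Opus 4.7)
The plan is to reduce to the scalar case and interpolate between the $L^2$ and $L^6$ endpoints on $\Si_t$. For a scalar function $f$, applying \eqref{eq:Sob-scalar1} to $f^2$ and using $|\nab(f^2)| = 2|f||\nab f|$ together with Cauchy--Schwarz yields
\[
\|f\|_{L^6(\Si_t)}^2 = \|f^2\|_{L^3(\Si_t)} \le C\bigl(\|\nab(f^2)\|_{L^1} + \|f^2\|_{L^1}\bigr) \le C\|f\|_{L^2}\bigl(\|\nab f\|_{L^2} + \|f\|_{L^2}\bigr),
\]
hence $\|f\|_{L^6} \le C\bigl(\|\nab f\|_{L^2} + \|f\|_{L^2}\bigr)$, which is the $p=6$ endpoint. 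For $p \in [2,6]$, the classical H\"older interpolation gives $\|f\|_{L^p} \le \|f\|_{L^2}^{\theta}\|f\|_{L^6}^{1-\theta}$ with $\theta = 3/p - 1/2$ and $1-\theta = 3/2 - 3/p \in [0,1]$. Combining this with the $L^6$ endpoint and the elementary subadditivity $(a+b)^{\alpha} \le a^{\alpha} + b^{\alpha}$ valid for $\alpha \in [0,1]$ produces \eqref{eq:Sob-scalar2} in the scalar case.

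To promote the inequality to an arbitrary tensorfield $F$, I invoke the pointwise Kato-type bound $|\nab|F|| \le |\nab F|$. Differentiating the identity $|F|^2 = \langle F,F\rangle_h$ (using the Riemannian metric $h$ of \eqref{eq:riem-metric}) yields $|F|\nab_a|F| = \langle F, \nab_a F\rangle_h$, and Cauchy--Schwarz with respect to $h$ gives the claimed bound at every point where $|F|>0$. The zero set of $|F|$ is handled by the standard $\sqrt{|F|^2+\varepsilon}$ regularization followed by $\varepsilon \to 0$. Since $\|F\|_{L^q(\Si_t)} = \||F|\|_{L^q(\Si_t)}$ by the definition \eqref{norm-U}, applying the scalar inequality above to $f = |F|$ gives \eqref{eq:Sob-scalar2}.

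The only real subtlety is the Kato-type inequality $|\nab|F|| \le |\nab F|$, and once one agrees to regularize as above, it is completely routine. Everything else is standard endpoint-plus-interpolation bookkeeping. The constant $C$ in \eqref{eq:Sob-scalar2} is uniform in $t$ because the constant in \eqref{eq:Sob-scalar1} is itself uniform in $t$ under assumptions {\bf A1} and {\bf A2} thanks to Proposition~\ref{prop:compare-metrics}.
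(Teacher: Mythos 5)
Your overall scheme -- a scalar endpoint estimate, interpolation with $L^2$, and passage from scalars to tensors via the Kato-type inequality $|\nab|F||\le|\nab F|$ -- is the same as the paper's, and both the interpolation bookkeeping ($\theta=3/p-1/2$, subadditivity of $x\mapsto x^\alpha$) and the regularization $\sqrt{|F|^2+\varepsilon}$ are fine. The genuine gap is in your derivation of the $p=6$ endpoint. You apply \eqref{eq:Sob-scalar1} in the literal form $\|h\|_{L^3}\le C(\|\nab h\|_{L^1}+\|h\|_{L^1})$ with $h=f^2$. That form is false in three dimensions: for a bump concentrated at scale $\lambda$ one has $\|h\|_{L^3}\sim\lambda$ while $\|\nab h\|_{L^1}+\|h\|_{L^1}\sim\lambda^2$, so the inequality fails as $\lambda\to 0$. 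What the proof of that proposition actually establishes (and what the paper's own proof of the corollary uses) is the $W^{1,1}\hookrightarrow L^{3/2}$ bound $\|h\|_{L^{3/2}}\le C(\|\nab h\|_{L^1}+\|h\|_{L^1})$; the ``$L^3$'' in the displayed statement is a misprint. As a result your intermediate inequality $\|f\|_{L^6}^2\le C\|f\|_{L^2}(\|\nab f\|_{L^2}+\|f\|_{L^2})$ is itself false (same concentrating bump: left side $\sim\lambda$, right side $\sim\lambda^2$); it would assert control of $\|f\|_{L^6}$ by $\|f\|_{L^2}^{1/2}\|f\|_{H^1}^{1/2}$, strictly stronger than the Sobolev embedding. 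So while the endpoint $\|f\|_{L^6}\le C(\|\nab f\|_{L^2}+\|f\|_{L^2})$ is true, your route to it does not stand.

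The repair is to raise the power to four rather than two, which is exactly what the paper does by applying the scalar inequality to $|F|^{2p/3}$ (equal to $|F|^4$ at $p=6$): using your Kato inequality, H\"older, and the correct $L^{3/2}$ form,
\beaa
\|F\|_{L^6}^4=\||F|^4\|_{L^{3/2}}&\le& C\big(\|\nab |F|^4\|_{L^1}+\||F|^4\|_{L^1}\big)\\
&\le& C\big(\|\nab F\|_{L^2}+\|F\|_{L^2}\big)\c\||F|^3\|_{L^2}
= C\big(\|\nab F\|_{L^2}+\|F\|_{L^2}\big)\c\|F\|_{L^6}^3,
\eeaa
and cancelling $\|F\|_{L^6}^3$ (finite for smooth $F$ on the compact $\Si_t$, and the estimate is trivial if it vanishes) gives the endpoint $\|F\|_{L^6}\le C(\|\nab F\|_{L^2}+\|F\|_{L^2})$. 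From there your interpolation $\|F\|_{L^p}\le\|F\|_{L^2}^{3/p-1/2}\|F\|_{L^6}^{3/2-3/p}$ together with $(a+b)^\alpha\le a^\alpha+b^\alpha$ for $\alpha\in[0,1]$ yields \eqref{eq:Sob-scalar2} verbatim, with the constant uniform in $t$ for the reason you give.
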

\begin{proof}:\quad We have,
\beaa
\|F\|_{L^p}^{2p/3}&=&\||F|^{2p/3}\|_{L^{3/2}}\le C\big(\|\nab
|F|^{2p/3}\|_{L^1}+\||F|^{2p/3}\|_{L^1}\big)\\
&\le&C\big(\|\nab F\|_{L^2}+ \|F\|_{L^2}\big)   \c\||F|^{2p/3-1}\|_{L^2}\\
&\le&C\big(\|\nab F\|_{L^2}+ \|F\|_{L^2}\big)\c
(\|F\|_{L^{\frac{4p-6}{3}}})^{\frac{4p-6}{6}}
\eeaa
In the particular case when $p=6$ we derive,
\beaa
\|F\|_{L^6}^4\le C\big(\|\nab F\|_{L^2}+ \|F\|_{L^2}\big)\c\|F\|_{L^6}^{3}
\eeaa
Therefore,
\beaa
\|F\|_{L^6}\le C\big(\|\nab F\|_{L^2}+ \|F\|_{L^2}\big)
\eeaa
Similarly, for $p=3$,
\beaa
\|F\|_{L^3}^{2}\le C\big(\|\nab F\|_{L^2}+ \|F\|_{L^2}\big)\c\|F\|_{L^2}
\eeaa
and thus,
\beaa
\|F\|_{L^3}\le C\big(\|\nab F\|_{L^2}+ \|F\|_{L^2}\big)^{1/2}\c\|F\|_{L^2}^{1/2}
\eeaa
The general case  follows  by interpolation.
\end{proof}
Here is another useful simple  calculus inequality which
we will make use of.
\begin{lemma}
\label{le:simple-calc}
Let $F$ be a tensorfield on a compact Riemanian manifold.
Then,
\beaa
\|\nab F\|_{L^4}\le \|\nab^2 F\|_{L^2}^{1/2} \|F\|_{L^\infty}^{1/2}
\eeaa
\end{lemma}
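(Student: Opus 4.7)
The plan is a single integration by parts of Gagliardo--Nirenberg type. First I would rewrite
$$
\|\nab F\|_{L^4}^4 = \int |\nab F|^2 \c \nab^a F \c \nab_a F \, dv_g,
$$
viewing one of the two copies of $|\nab F|^2$ as the scalar contraction $\nab^a F \c \nab_a F$. Since the underlying manifold is compact and without boundary, I can integrate by parts to move the derivative off $\nab_a F$, with no boundary contribution, obtaining
$$
\|\nab F\|_{L^4}^4 = -\int F \c \nab_a\bigl(|\nab F|^2\, \nab^a F\bigr) \, dv_g.
$$

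Expanding by the Leibniz rule produces two schematic pieces, both of the form $F \c \nab^2 F \c |\nab F|^2$ (one from differentiating $|\nab F|^2$, one from the $\nab_a \nab^a F$ term). I would then pull the $F$ factor out in $L^\infty$ and apply Cauchy--Schwarz to the remaining integrand, placing $\nab^2 F$ in $L^2$ and $|\nab F|^2$ in $L^2$, to get
$$
\|\nab F\|_{L^4}^4 \les \|F\|_{L^\infty} \c \|\nab^2 F\|_{L^2} \c \|\nab F\|_{L^4}^2.
$$
Dividing through by $\|\nab F\|_{L^4}^2$ and taking a square root yields the asserted bound.

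The only mild point that needs attention is the tensorial nature of $F$, but since $|\nab F|^2$ is a scalar function, the computation only uses metric-compatibility of $\nab$ together with the Leibniz rule, and the tensor type of $F$ never really enters. I do not expect any substantive obstacle; the argument is a standard one-line interpolation, and the unexplicit constant produced by the Leibniz rule is absorbed into the $\les$ (or, if one tracks it carefully, is a small universal number).
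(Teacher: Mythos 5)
Your proposal is correct and is essentially the paper's own argument: a single integration by parts on $\int |\nab F|^4$ followed by H\"older, pulling $F$ out in $L^\infty$, placing $\nab^2 F$ in $L^2$ and $|\nab F|^2$ in $L^2$, then dividing by $\|\nab F\|_{L^4}^2$. The remark about the harmless Leibniz constant matches the paper's implicit treatment, so there is nothing to add.
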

\begin{proof}:\quad  After an  integration by parts and H\"older,
\beaa
\int_{\Si}|\nab F|^4\le \|\nab^2 F\|_{L^2}\|\nab F\|_{L^4}^2\|F\|_{L^\infty}
\eeaa
 Hence,
\beaa
\|\nab F\|_{L^4}^2\le \|\nab^2F\|_{L^2}\|F\|_{L^\infty}
\eeaa
\end{proof}
{\bf Remark.}\quad We cannot use transported coordinates to derive
a Sobolev inequality of the form,
\beaa
\|f\|_{L^\infty}\les \|\nab^2 f\|_{L^2}+\|f\|_{L^2}
\eeaa
even in the case of  a scalar function $f$. Indeed,
the standard Sobolev inequality in a  coordinate chart $U$ provides,
\beaa
\|f\|_{L^\infty(U)}\les\sum_{i,j=1}^3\|\pr_i\pr_j f\|_{L^2(U)}+\|f\|_{L^2(U)}
\eeaa
On the other hand $\nab_i\nab_j f= \pr_i\pr_j f-\Ga^l_{ij}\pr_l f$ and therefore
we cannot derive the desired  estimate without a bound for the $L^3$ norm 
of  $\Ga$. Unfortunately, the only way to estimate $\Ga$ is by
differentiating the equation $\pr_t g=-2n k$ from which we could only
bound its $L^2$ norm. To get around this difficulty we need a better system
of coordinates. In \cite{injectivity} we have a proved a slightly more general
version of the following:
\begin{theorem}\label{thm:harmonic}
Assume that $\MM_*$ is globally hyperbolic
and  verifies the assumptions {\bf A1} and {\bf A2} as well
as \eqref{eq:const-R0}. 
Then,   for any $\ep>0$, there exists $r_0>0$, depending only  on
 $\ep, \De_0, \RR_0, t_*$,
such that  on any geodesic ball   $B_{r}\subset \Si_{t}$, $r\le r_0$,
centered at a point $p_t\in \Si_{t}$,  there exist local coordinates relative to which
 the metric $g_t$ verify conditions
\bea
(1+\ep)^{-1}\de_{ij}\le g_{ij}&\le &(1+\ep) \de_{ij}\label{eq:harm-coord1}\\
r\int_{B_r(p)} |\pr^2 g_{ij}|^2 dv_g &\le&\ep.  \label{eq:harm-coord2}
\eea 
\end{theorem}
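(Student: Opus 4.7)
My approach combines a propagated $L^2$-bound on the ambient spacetime curvature with an Anderson-type harmonic radius estimate, followed by standard elliptic regularity in the harmonic chart. The first step is to propagate the $L^2$-curvature bound forward to each slice $\Si_t$. Starting from the Bel-Robinson tensor $\Q[\R]$ of \eqref{eq:Einst-vacuum} and the energy $\EE(t)=\int_{\Si_t}\Q[\R](\T,\T,\T,\T)$, the divergence identity $\D^\de \Q_{\a\b\ga\de}=0$ together with the decomposition of $\D\T$ in terms of $k$ and $n^{-1}\nab n$ yields
$$\Big|\frac{d}{dt}\EE(t)\Big|\les \int_{\Si_t} n\,|\piT|\,\Q[\R](\T,\T,\T,\T).$$
Assumption {\bf A2}, the bound $n\le 3/t^2$ from Proposition \ref{prop:max-princ}, and Grönwall then promote the initial bound $\|\R(t_0)\|_{L^2}\le \RR_0$ into a uniform bound $\|\R(t)\|_{L^2(\Si_t)}\le C(\Delta_0,\RR_0,t_*)$ on $[t_0,t_*)$.

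Next I would translate this into an intrinsic $L^2$-Ricci bound on $\Si_t$ via the Gauss equation,
$$R^{\Si_t}_{ij}=-\R_{i\T j\T}+(\tr k)\,k_{ij}-k_{ia}k^a{}_j,$$
which, since $\R_{\a\b}=0$ in vacuum, is controlled in $L^2$ by $\|\R(t)\|_{L^2(\Si_t)}$ plus $\|k\|_{L^\infty}^2|\Si_t|^{1/2}\les \Delta_0^2$, giving $\|\mbox{Ric}(\Si_t)\|_{L^2(\Si_t)}\le C$. On $\Si_t$ I would then solve $\lap_{g_t}x^\a=0$ on a small geodesic ball $B_r(p_t)$ with boundary data inherited from the transported coordinates of Lemma \ref{le:initial-rho}. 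Proposition \ref{prop:compare-metrics} furnishes a uniform volume non-collapsing bound on such balls and Corollary \ref{corr:Sob} a uniform Sobolev constant. A Cheeger--Gromov blow-up / contradiction argument of Anderson type, built on these three ingredients ($L^2$-Ricci bound, volume non-collapsing, Sobolev constant), yields $r_0=r_0(\ep,\Delta_0,\RR_0,t_*)>0$ below which the resulting harmonic chart is $(1+\ep)$-close to the Euclidean metric, giving \eqref{eq:harm-coord1}.

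For the final bound \eqref{eq:harm-coord2}, in the harmonic chart the classical identity
$$-\tfrac12 g^{ab}\pr_a\pr_b g_{ij}+Q(g,\pr g)=R_{ij},\qquad Q\text{ quadratic in }\pr g,$$
combined with the quasi-isometry from the previous step and the $L^4$ interpolation Lemma \ref{le:simple-calc}, produces
$$\int_{B_r}|\pr^2 g|^2\les \int_{B_r}|\mbox{Ric}|^2+\int_{B_r}|\pr g|^4.$$
Rescaling to the unit ball produces the expected factor of $r$ on the left-hand side, and shrinking $r_0$ once more absorbs the nonlinear $|\pr g|^4$ term.

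The hard part will be the construction of the harmonic chart at a controlled scale. Producing $r_0$ from purely $L^2$-type Ricci information, rather than the pointwise Ricci bound used in more elementary references, requires the full Anderson convergence/compactness machinery, and careful bookkeeping is needed to ensure that the Sobolev constant and the volume lower bound on small balls depend only on $\Delta_0,\RR_0,t_*$ uniformly along the foliation $\Si_t$. The other steps are essentially routine once this ingredient is in place.
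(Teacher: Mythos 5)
Your proposal is correct in outline and follows essentially the same route as the actual argument: the paper does not prove Theorem \ref{thm:harmonic} here but quotes it from \cite{injectivity}, where it is obtained exactly along the lines you describe — the Bel-Robinson energy estimate under {\bf A1}, {\bf A2} gives the uniform $L^2$ curvature bound (Proposition \ref{prop:L2-curv}), the Gauss equation and the $L^\infty$ bound on $k$ convert this into $L^2$ control of the Ricci curvature of $\Si_t$, the transported-coordinate comparison (Proposition \ref{prop:compare-metrics}) gives uniform volume non-collapsing, and an Anderson-type Cheeger--Gromov contradiction argument then bounds the scale-invariant $W^{2,2}$ harmonic radius from below, uniformly in $t$. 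The one caveat is that in that argument \eqref{eq:harm-coord1} and \eqref{eq:harm-coord2} are obtained simultaneously, being built into the definition of the harmonic radius whose lower bound is the object of the blow-up argument, whereas your two-step version (first $C^0$ closeness, then a separate elliptic bootstrap for $\pr^2 g$) would need some a priori quantitative control of $\pr g$ in the harmonic chart before the absorption of the $|\pr g|^4$ term can be justified.
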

As a corollary we derive the following version
of the Sobolev  inequlity
\begin{corollary}\label{cor:Sobolev}
Given a  smaooth scalar  function f  on $\Si_t$ we have,
\bea
\|f\|_{L^\infty(\Si_t)}\le C\big\|\nab^2 f\|_{L^2(\Si_t)}+\|f\|_{L^2(\Si_t)}\big)
\eea
with $C>$ a universal constant, i.e.  depending only on  the fundamental constants
$ \De_0,\RR_0, t_* $.
\end{corollary}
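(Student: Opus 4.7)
The plan is to reduce the corollary to the flat Sobolev embedding $W^{2,2}\hookrightarrow L^\infty$ in $\RRR^3$, using the harmonic coordinate charts provided by Theorem \ref{thm:harmonic} to ensure that the metric is nearly Euclidean and that the commutator between coordinate and covariant second derivatives can be treated perturbatively.

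First, fix a small $\ep>0$ (to be chosen) and apply Theorem \ref{thm:harmonic} to obtain a radius $r_0 = r_0(\ep, \De_0, \RR_0, t_*)>0$ such that every point of $\Si_t$ is the center of a geodesic ball $B_{r_0}$ carrying coordinates in which $g_{ij}$ is $\ep$-close to $\de_{ij}$ and $r_0\int_{B_{r_0}}|\pr^2 g|^2 \le \ep$. Using the volume bound \eqref{eq:vol-t} on $\Si_t$, extract a finite subcover $\{B_{r_0/2}(p_\a)\}$ whose cardinality depends only on the fundamental constants, together with a subordinate partition of unity $\{\chi_\a\}$.

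Second, in each such coordinate chart the metric equivalence reduces $L^p$ norms of tensors (and their coordinate components) to Euclidean $L^p$ norms up to universal constants, and the standard Sobolev embedding $W^{2,2}(B)\hookrightarrow L^\infty(B)$ in $\RRR^3$ applied to $f$ (cut off by $\chi_\a$) yields
\[
\|f\|_{L^\infty(B_{r_0/2}(p_\a))} \lesssim \|\pr^2 f\|_{L^2(B_{r_0}(p_\a))} + \|\pr f\|_{L^2(B_{r_0}(p_\a))} + \|f\|_{L^2(B_{r_0}(p_\a))}.
\]
For a scalar $f$ one has $\pr_i f = \nab_i f$ and $\pr_i\pr_j f = \nab_i\nab_j f + \Ga^k_{ij}\pr_k f$, so the only nontrivial task is to dominate $\|\Ga\c\pr f\|_{L^2(B_{r_0}(p_\a))}$ by the right hand side of the corollary. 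Bound it via Hölder as $\|\Ga\|_{L^6}\|\pr f\|_{L^3}$: the smallness $r_0\int|\pr^2 g|^2 \le \ep$, combined with the $L^\infty$ closeness of $g$ to $\de$ (which, by interpolation with the $L^2$ estimate on $\pr^2 g$, controls $\|\pr g\|_{L^2}$), together with the flat embedding $H^1\hookrightarrow L^6$ in $\RRR^3$, gives $\|\Ga\|_{L^6(B_{r_0})} \lesssim \ep^{1/2}$; meanwhile $\|\pr f\|_{L^3} \lesssim \|\pr^2 f\|_{L^2}^{1/2}\|\pr f\|_{L^2}^{1/2}$ by flat Sobolev. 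Choosing $\ep$ small absorbs the $\pr^2 f$ contribution back into the left hand side, and the residual $\|\pr f\|_{L^2}$ pieces are handled globally by $\|\nab f\|_{L^2(\Si_t)}^2 \le \|f\|_{L^2(\Si_t)}\|\nab^2 f\|_{L^2(\Si_t)}$, obtained by integration by parts on the closed manifold $\Si_t$. Summing over the finite cover produces the asserted estimate.

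The main obstacle is the quantitative control of the Christoffel symbols in a norm strictly stronger than $L^2$ using only the information furnished by Theorem \ref{thm:harmonic}. The hypotheses supply $g$ close to flat in $C^0$ and $\pr^2 g$ small in $L^2$, but a direct $L^p$ bound on $\pr g$ for $p>2$ requires either a Poincaré-type inequality with a scale-invariant constant or a careful interpolation between the $C^0$ and $W^{2,2}$ controls on $g-\de$; one must verify that every implicit constant in this commutator analysis depends only on the fundamental constants $\De_0, \RR_0, t_*$ and not on any hidden pointwise derivative of the metric.
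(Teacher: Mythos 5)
Your argument is correct and is exactly the route the paper intends: the Remark preceding Theorem \ref{thm:harmonic} identifies the missing ingredient as an $L^3$-type bound on the Christoffel symbols, and the harmonic coordinates of that theorem are introduced precisely so that the flat Sobolev embedding plus the commutator term $\Ga\c\pr f$ can be handled as you do (with the minor caveat that you only need $\|\Ga\|_{L^6}$ bounded in terms of $\De_0,\RR_0,t_*$, not small, since the Young/absorption step works for any bounded constant). The paper leaves this proof to the reader, and your write-up is the natural completion of it.
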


\section{Basic  Curvature energy estimates}
\subsection{General procedure}\label{sec:energy estimates}We   recall the general procedure
to derive  energy estimates for $\R$, see section 7.1 in \cite{C-K}. First let $W$ denote a
Weyl field, i.e  a four covariant tensor traceless tensor  $W_{\a\b\ga\de}$ verifying all
the algebraic symetries of  the   curvature tensor $\R$. Let, 
\be{eq:Bel-Robinson-W}
\Q[W]_{\a\b\ga\de}=W_{\a\la\ga\mu}W_{\b\,\,\de}^{\,\la\,\,\mu}+
\dual W_{\a\la\ga\mu}\dual W_{\b\,\,\de}^{\,\la\,\,\mu}\,\,
\end{equation}
Given  a vectorfield $X$ we denote 
$P_\a=\Q[W]_{\a\b\ga\de}X^\b X^\ga X^\de$.
By a straightforward calculation,
\beaa
\D^\a P_\a=\Div \Q[W]X^\b X^\ga X^\de+
\frac{3}{2}\Q_{\a\b\ga\de}\piX^{\a\b}X^\ga X^\de
\eeaa
where $\piX$ is the deformation tensor of $X$
Therefore,
integrating on the slab $\cup_{t'\in[t_0,t]}\Si_{t'}$ 
we derive the following. 
\begin{proposition}
\label{prop:main-energy}Let $\Q=\Q[W]$ be the Bel-Robinson
tensor of a  Weyl field $W$. Then,
\bea
\int_{\Si_t}\Q(X,X,X, \T)
&=&\int_{\Si_{0}}\Q( X,X,X,\T)
+\int_{t_0}^t\int_{\Si_{t'}}\Div \Q(X,X,X) n dv_g\nn\\
&+&\frac{3}{2}\int_{t_0}^t\int_{\Si_{t'}}\Q_{\a\b\ga\de}\piX^{\a\b}X^\ga
X^\de ndv_g
\eea
with $dv_g$ denoting the volume element on $\Si_t$.
\end{proposition}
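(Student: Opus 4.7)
The plan is to integrate the pointwise identity for $\D^\a P_\a$ stated above over the spacetime slab $R = \cup_{t'\in[t_0,t]}\Si_{t'}$ and then invoke the divergence theorem, using that the boundary of $R$ consists only of the two spacelike slices $\Si_t$ and $\Si_0$ (each $\Si_{t'}$ is compact, so no lateral boundary appears).

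First I would verify the pointwise identity
$$\D^\a P_\a = \Div \Q(X,X,X) + \frac{3}{2}\Q_{\a\b\ga\de}\piX^{\a\b}X^\ga X^\de$$
by the Leibniz rule: $\D^\a(\Q_{\a\b\ga\de}X^\b X^\ga X^\de) = (\D^\a\Q_{\a\b\ga\de})X^\b X^\ga X^\de + 3\,\Q_{\a\b\ga\de}(\D^\a X^\b)X^\ga X^\de$, where the factor $3$ reflects the total symmetry of $\Q$ in its last three indices. Because $\Q$ is symmetric in $(\a,\b)$, only the symmetric part $\frac{1}{2}\piX^{\a\b} = \frac{1}{2}(\D^\a X^\b + \D^\b X^\a)$ of $\D^\a X^\b$ contributes to the contraction, yielding the coefficient $\frac{3}{2}$.

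Second I would integrate this identity against the spacetime volume form $\omega$, which in coordinates adapted to the foliation factors as $\omega = n\sqrt{\det g}\, dt\wedge dx = n\, dt\wedge dv_g$, producing the overall factor of $n$ in the slab integrals. Using $d(i_P\omega) = (\D^\a P_\a)\omega$ together with Stokes' theorem on $R$, the spacetime integral reduces to the difference of boundary integrals over $\Si_t$ and $\Si_0$. A direct coordinate computation, with $\T^\a = n^{-1}\delta^\a_0$ and $\g^{00} = -n^{-2}$ in adapted coordinates, identifies the pullback of $i_P\omega$ to each slice $\Si_{t'}$ with $\Q(X,X,X,\T)\, dv_g$ up to a sign fixed by the orientation of $\Si_{t'}$ as a boundary of $R$.

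The only point requiring care is the orientation bookkeeping: $\Si_t$ enters as the future boundary of the slab with induced outward normal $+\T$, while $\Si_0$ enters as the past boundary with outward normal $-\T$; combined with the sign arising in the coordinate identification of $i_P\omega$ with $\Q(X,X,X,\T)\,dv_g$, these produce the identity exactly as stated, with both boundary contributions entering on the same side. There is no genuine obstacle --- this is the standard Bel--Robinson energy identity familiar from \cite{C-K} --- and the only geometric input beyond the pointwise differential identity is the compactness of each $\Si_{t'}$, which kills any lateral flux term.
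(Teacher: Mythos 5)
Your proposal is correct and follows the same route as the paper: the paper likewise establishes the pointwise Leibniz identity $\D^\a P_\a=\Div \Q(X,X,X)+\frac{3}{2}\Q_{\a\b\ga\de}\piX^{\a\b}X^\ga X^\de$ for $P_\a=\Q_{\a\b\ga\de}X^\b X^\ga X^\de$ (the factor $\frac 32$ coming from the symmetry of $\Q$, exactly as you argue) and then integrates over the slab $\cup_{t'\in[t_0,t]}\Si_{t'}$, the lapse factor $n$ and the boundary terms on $\Si_0$, $\Si_t$ arising from the volume form $n\,dt\,dv_g$ and the compactness of the leaves, as in your Stokes-theorem bookkeeping. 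Your added detail on the orientation of the two boundary components and the coordinate identification of the flux with $\Q(X,X,X,\T)\,dv_g$ is exactly what the paper leaves implicit by citing section 7.1 of \cite{C-K}.
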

The following proposition is an immediate consequence,  see also section 5
 in  \cite{injectivity} for a proof. One simply needs to apply the proposition above
 for $X=\T$ together with  the positivity of $\Q(\T,\T,\T,\T)$ and the  uniform
 bounds for $\piT$ and $n$. 
\begin{proposition}
\label{prop:L2-curv}
 Under assumption {\bf A2}     There exists a constant $C=C( \De_0, t_*)$ 
such that,  for any $t_0\le t< t_* <0$,
\be{eq:L2-curv}
\|\R(t)\|_{L^2}\le C \RR_0.
\end{equation}
where $\RR_0$ is the constant defined by \eqref{eq:const-R0}.
\end{proposition}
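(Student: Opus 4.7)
The plan is to apply the energy identity from Proposition \ref{prop:main-energy} with $W = \R$ (the vacuum curvature tensor itself) and $X = \T$, and then close a Gr\"onwall argument. The three ingredients needed are: vanishing of the divergence term, positivity of $\Q[\R](\T,\T,\T,\T)$, and control of the deformation-tensor error term.

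First, because $(\M,\g)$ is Einstein vacuum, the twice-contracted second Bianchi identity gives $\D^\a \R_{\a\b\ga\de} = 0$. A direct computation from \eqref{eq:Bel-Robinson-W} then yields $\D^\a \Q[\R]_{\a\b\ga\de} = 0$, so the $\Div \Q$ contribution in Proposition \ref{prop:main-energy} drops out and we are left with
\[
\int_{\Si_t} \Q[\R](\T,\T,\T,\T) = \int_{\Si_0} \Q[\R](\T,\T,\T,\T) + \frac{3}{2}\int_{t_0}^{t}\int_{\Si_{t'}} \Q_{\a\b\ga\de}\, \piT^{\a\b} \T^\ga \T^\de\, n\, dv_g\, dt'.
\]

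Next I would invoke the standard positivity of the Bel-Robinson tensor: with $E$ and $H$ denoting the electric and magnetic parts of $\R$ relative to $\T$, one has $\Q[\R](\T,\T,\T,\T) = |E|^2 + |H|^2 \sim |\R|^2$, with the norms taken relative to the Riemannian metric $h$. Consequently the left-hand side above is comparable to $\|\R(t)\|_{L^2}^2$ and the initial-datum term is controlled by $\RR_0^2$. The same positivity, applied to a symmetric $2$-tensor $\pi$ of $h$-norm one, gives the pointwise bound $|\Q_{\a\b\ga\de}\, \pi^{\a\b} \T^\ga \T^\de| \les |\pi|_h\, \Q(\T,\T,\T,\T) \les |\pi|_h\, |\R|^2$. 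This algebraic inequality is the only step requiring a little attention, but it is classical and uses nothing beyond the symmetries of $\Q$.

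Combining these ingredients with assumption {\bf A2} (giving $|\piT| \le \De_0$) and the lapse bound $n \le 3/t_*^2$ inherited from \eqref{eq:max-n} (since $|t| \ge |t_*|$ on the slab), the error term is dominated by $C(\De_0, t_*) \int_{t_0}^{t} \|\R(t')\|_{L^2}^2\, dt'$, so
\[
\|\R(t)\|_{L^2}^2 \les \RR_0^2 + C(\De_0, t_*) \int_{t_0}^{t} \|\R(t')\|_{L^2}^2\, dt'.
\]
Gr\"onwall's inequality, together with $t - t_0 \le 1$, yields $\|\R(t)\|_{L^2} \le C(\De_0, t_*) \RR_0$, as claimed. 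There is no serious obstacle here: the argument is a direct application of the Bel-Robinson energy identity, made possible precisely by the pointwise control of $\piT$ that assumption {\bf A2} supplies in the absence of an actual Killing field.
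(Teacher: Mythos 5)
Your proposal is correct and follows essentially the same route as the paper: the paper's proof likewise applies Proposition \ref{prop:main-energy} with $X=\T$, uses $\Div \Q[\R]=0$ (from the vacuum Bianchi identity), the positivity of $\Q(\T,\T,\T,\T)\sim|\R|^2$, and the uniform bounds on $\piT$ (assumption \textbf{A2}) and on $n$ (Proposition \ref{prop:max-princ}), closing with Gr\"onwall. Your filled-in details, including the pointwise bound $|\Q_{\a\b\ga\de}\piT^{\a\b}\T^\ga\T^\de|\les |\piT|\,\Q(\T,\T,\T,\T)$, are exactly what the paper leaves implicit.
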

\begin{definition}\label{def:notation} In what follows
 we extend the  usual notation   
$A\les B$ to include inequalities  $A\le  c B$ where $c=c(t_*,\De_0, \RR_0)$ is a
constant which  depends    on  our fundamental constants $t_*$,  $\De_0$ and $\RR_0$.
\end{definition}
 In
particular, in view of proposition \ref{prop:L2-curv}  we can  write 
$$\|\R(t)\|_{L^2}\les \RR_0\les 1.$$

\subsection{Wave equation for the curvature tensor}
Recall the Bianchi identitities,
\be{eq:Bianchi}
\D_{[\si}\R_{\a\b]\ga\de}=0
\end{equation}
or, equivalently since $\R_{\a\b}=0$,
\be{eq:Bianchi2}
\D^{\de}\R_{\a\b\ga\de}=0
\end{equation}
Differentiating  \eqref{eq:Bianchi} once more an taking the trace we derive,
\beaa
\Box \R_{\a\b\ga\de} +\D^\si \D_\a \R_{\b\si\ga \de}+ \D^\si \D_\b \R_{\si\a\ga \de}=0
\eeaa
Now, in view of \eqref{eq:Bianchi2}, commuting
covariant derivatives, 
\beaa
\D^\si \D_\a \R_{\b\si\ga \de}&=& \R_{\b\,\,\,\,\,\a }^{\,\,\, \mu\si}\R_{\mu\si\ga\de}
+\R_{\ga\,\,\,\,\,\a }^{\,\,\, \mu\si}\R_{\b\si\mu\de}+\R_{\de\,\,\,\,\,\a }^{\,\,\,
\mu\si}\R_{\b\si\ga\mu}\\
\D^\si \D_\b \R_{\a\si\ga \de}&=& \R_{\a\,\,\,\,\,\b }^{\,\,\, \mu\si}\R_{\mu\si\ga\de}
+\R_{\ga\,\,\,\,\,\b }^{\,\,\, \mu\si}\R_{\a\si\mu\de}+\R_{\de\,\,\,\,\,\b }^{\,\,\,
\mu\si}\R_{\a\si\ga\mu}
\eeaa
Hence,
\beaa
\D^\si \D_\a \R_{\b\si\ga \de}+\D^\si \D_\b \R_{\si\a\ga \de}&=&
\R_{\mu\si\ga\de}\big(\R_{\b\,\,\,\,\,\a }^{\,\,\, \mu\si}-
\R_{\a\,\,\,\,\,\b }^{\,\,\, \mu\si}\big)\\
&-&\R_{\a\si\ga\mu}\big(\R_{\de\,\,\,\,\,\b }^{\,\,\,
\mu\si}+\R_{\b\,\,\,\,\,\de }^{\,\,\,
\si\mu}\big)\\
&-& \R_{\a\si\mu\de}\big(\R_{\b\,\,\,\,\,\ga }^{\,\,\, \si\mu}+\R_{\ga\,\,\,\,\,\b }^{\,\,\,
\mu\si}\big)
\eeaa
Thus introducing the notation,
\bea
(\R\star \R)_{\a\b\ga\de}&=&-\R_{\mu\si\ga\de}\big(\R_{\b\,\,\,\,\,\a }^{\,\,\, \mu\si}-
\R_{\a\,\,\,\,\,\b }^{\,\,\, \mu\si}\big)
+\R_{\a\si\ga\mu}\big(\R_{\de\,\,\,\,\,\b }^{\,\,\,
\mu\si}+\R_{\b\,\,\,\,\,\de }^{\,\,\,
\si\mu}\big)
\label{eq:notation1}\\
&+&\R_{\a\si\mu\de}\big(\R_{\b\,\,\,\,\,\ga }^{\,\,\, \si\mu}+\R_{\ga\,\,\,\,\,\b
}^{\,\,\,
\mu\si}\big)\nn
\eea
we derive,
\be{eq:wave1}
\square \,\R=\R\star \R
\end{equation}
Clearly $W=\R\star \R$ is a Weyl field, i.e.
it satisfies all the algebraic symmetries of the curvature
tensor  plus  the traceless condition  $W_{\a\,\,\mu \b}^{\,\,\mu}=0.$
\subsection{Energy estimates for higher derivatives}
To  estimate   the first derivatives of $\R$ we shall use the covariant  wave equation  \eqref{eq:wave1}. 
Recall  the positive definite  space-time metric $h$ defined by \eqref{eq:riem-metric}.  Given a tensor-field  $U_{\a_1\ldots \a_m}$
we write, for simplicity,
\beaa
h^{IJ} U_I U_J&=& h^{\a_1\b_1}\ldots h^{\a_m\b_m}U_{\a_1\ldots\a_m }U_{\b_1\ldots\b_m }\\
U_I&=&U_{\a_1\ldots\a_m }, \quad U_J=U_{\b_1\ldots\b_m },\quad h^{IJ} =h^{\a_1\b_1}\ldots h^{\a_m\b_m}
\eeaa
Consider the energy-momentum type   tensor $\Q^{(w)}_{\a\b}$ associated with the covariant wave operator 
$\square$ acting on tensors,
\bea
\Q^{(w)}[U]_{\a\b}:&=&h^{IJ} \D_\a U_I \D_\b U_J-\frac 1 2 \g_{\a\b} h^{IJ} \g^{\mu\nu}\D_\mu U_I\D_\nu U_J
\eea
We have,
\beaa
\D^\b\Q^{(w)}[U]_{\a\b}&=&h^{IJ} \D_\a U_I  \D^\b \D_\b U_J+h^{IJ}  \D^\b\D_\a U_I \D_\b U_J-
 \g_{\a\b}h^{IJ} \g^{\mu\nu} \D^\b\D_\mu U_I\D_\nu U_J\\
 &+& D^\b  h^{IJ} \D_\a U_I \D_\b U_J-\frac 1 2 \g_{\a\b} D^\b h^{IJ} \g^{\mu\nu}\D_\mu U_I\D_\nu U_J\\
 &=&h^{IJ} \D_\a U_I ( \square U_J) +h^{IJ}  (\D_\b \D_\a U_I-  \D_\a \D_\b U_I)    \D^\b U_J\\
 &+& D^\b  h^{IJ}\big ( \D_\a U_I \D_\b U_J-\frac 1 2 \g_{\a\b} \g^{\mu\nu}\D_\mu U_I\D_\nu U_J\big)
\eeaa
Consequently, in view of \eqref{bound.Dh},
\beaa
|\D\Q^{(w)}[U]|\les |\D U||\Box U|+|\R| |U| |\D U|+ \De_0 |\D U|^2
\eeaa
Therefore
since,
\bea
\D^\b(\Q^{(w)}[U]_{\a\b}\T^\a)=D^\b\T^\a\Q^{(w)}[U]_{\a\b}+T^\b \D^\b\Q^{(w)}[U]_{\a\b}
\label{eq:divQ.wave}
\eea
we derive,
\bea
|\D^\b(\Q^{(w)}[U]_{\a\b}\T^\a)|&\les&  |\D U||\Box U|+|\R| |U| |\D U|+\De_0 |\D U|^2
\label{eq:divQ.wave-abs}
\eea
On the other hand,
\beaa
\Q^{(w)}[U](\T,\T)&=&\frac 1 2  h^{IJ}\big(  \D_0 U_I \D_0 U_J+\sum_{l=1}^k\D_l U_I\D_l U_J \big)\\
&=&\frac 12 |\D U|^2
\eeaa
Integrating \eqref{eq:divQ.wave} we derive, 
\bea
\int_{\Si_t} |\D U|^2 &=&  2 \int_{\Si_t}\Q^{(w)}[U](\T,\T)\nn \\
&\le &   \int_{\Si_{t_0}} |\D U|^2+
     \int_{t_0}^t\int_{\Si_{t'} }|\D^\b(\Q^{(w)}[U]_{\a\b}\T^\a)|\nn\\
     &\les&  \int_{t_0}^t\int_{\Si_{t'}}   \left (|\D U||\Box U|+|\R| |U| |\D U|+\De_0 |\D U|^2\right).\label{eq:divQ.wave-int}
     \eea
Applying this to $U=\R$ and using the equation \eqref{eq:wave1} we obtain,
\beaa
\|\D\R(t)\|^2_{L^2}&\les& \|\D\R(t_0)\|^2_{L^2}+ \De_0  \int_{t_0}^t\|\D\R(t')\|^2_{L^2} dt'\\&+& 
\int_{t_0}^t\|\D\R(t')\|_{L^2}\|\R(t')\|_{L^2}\|\R(t')\|_{L^\infty} dt'\\
&\les&  \|\D\R(t_0)\|^2_{L^2}+\De_0  \int_{t_0}^t\|\D\R(t')\|^2_{L^2} dt'\\ &+&\RR_0   \int_{t_0}^t\|\D\R(t')\|_{L^2} \|\R(t')\|_{L^\infty} dt'
\eeaa
Therefore in order to get an a-priori estimate for  $ \|\D\R(t)\|_{L^2}$ it  suffices to prove an estimate
for the $L^\infty$ norm of $\R$.  More precisely,
\begin{proposition}\label{thm:deriv-energy-estim}
Assume that  the assumptions
{\bf A1}, {\bf A2} hold true. 
Then the following  derivative curvature estimates hold true
for all $t_0\le t< t_*$,
\bea
\|\D\R(t)\|_{L^2}^2\le C \big(\|\D\R(t_0)\|_{L^2}^2+\int_{t_0}^t \|\R(t')\|_{L^\infty}^2  dt' \big)
\eea
with $C$ a constant depending only on $\De_0$, $\RR_0$ and  $t_*$. 
\end{proposition}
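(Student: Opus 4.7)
The proof essentially reduces to a Grönwall-type argument applied to the pre-estimate already derived in the paragraph preceding the proposition statement. All the hard conceptual work, namely setting up the energy momentum tensor $\Q^{(w)}$ adapted to the covariant wave operator acting on tensors, computing its divergence, and converting the boundary term at $\T$ into $\frac{1}{2}|\D U|^2$, has already been carried out. The plan is therefore to close the remaining inequality as an ODE inequality in $t$ for the quantity $E(t):=\|\D\R(t)\|_{L^2}^2$.

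First, I would start from the displayed estimate immediately above the proposition, namely
\[
\|\D\R(t)\|^2_{L^2}\lesssim \|\D\R(t_0)\|^2_{L^2}+\De_0\int_{t_0}^t\|\D\R(t')\|^2_{L^2}\,dt'+\RR_0\int_{t_0}^t\|\D\R(t')\|_{L^2}\|\R(t')\|_{L^\infty}\,dt',
\]
which follows by specializing \eqref{eq:divQ.wave-int} to $U=\R$, invoking the wave equation \eqref{eq:wave1}, and bounding $\|\R(t')\|_{L^2}\lesssim \RR_0$ via Proposition \ref{prop:L2-curv}. Since the constants $\De_0$ and $\RR_0$ are absorbed into the notation $\lesssim$ from Definition \ref{def:notation}, this inequality is already of Grönwall form up to the mixed term.

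Next, I would dispose of the cross term $\|\D\R(t')\|_{L^2}\|\R(t')\|_{L^\infty}$ by an elementary Cauchy--Schwarz (or AM--GM) split,
\[
\|\D\R(t')\|_{L^2}\|\R(t')\|_{L^\infty}\le \tfrac{1}{2}\|\D\R(t')\|_{L^2}^2+\tfrac{1}{2}\|\R(t')\|_{L^\infty}^2,
\]
so that, setting $E(t)=\|\D\R(t)\|_{L^2}^2$, one obtains the scalar integral inequality
\[
E(t)\lesssim E(t_0)+\int_{t_0}^t E(t')\,dt'+\int_{t_0}^t\|\R(t')\|_{L^\infty}^2\,dt'.
\]

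Finally, applying the standard Grönwall lemma on the bounded interval $[t_0,t_*)$ yields
\[
E(t)\lesssim e^{c(t-t_0)}\Big(E(t_0)+\int_{t_0}^t\|\R(t')\|_{L^\infty}^2\,dt'\Big),
\]
and absorbing the exponential factor into the constant (allowed since $t_*$ is one of the fundamental constants) produces exactly the claimed bound. I do not expect any serious obstacle here: the genuine analytic difficulty of the proposition lies upstream, in the divergence identity for $\Q^{(w)}$ and in the uniform control $|\D h|\lesssim \De_0$, both of which rest squarely on assumption \textbf{A2}. The closing step is purely an ODE argument, and the only subtlety to mention is that the commutator term $|\D_\b\D_\a U_I-\D_\a\D_\b U_I|\lesssim |\R||U|$ is exactly what forces the factor $\|\R(t')\|_{L^2}\|\R(t')\|_{L^\infty}$, whose $L^2$ factor is then bounded by $\RR_0$ a priori — thereby explaining why the right-hand side of the proposition is linear in $\|\R(t')\|_{L^\infty}^2$ rather than quadratic in $\|\R(t')\|_{L^\infty}$ multiplied by $\|\D\R(t')\|_{L^2}$.
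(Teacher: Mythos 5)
Your proposal is correct and follows essentially the same route as the paper: the paper derives exactly the pre-estimate you quote (from \eqref{eq:divQ.wave-int} with $U=\R$, the wave equation \eqref{eq:wave1}, and $\|\R(t')\|_{L^2}\lesssim\RR_0$) and then leaves the closing step implicit, which is precisely your Cauchy--Schwarz split of the cross term followed by Gr\"onwall on $[t_0,t_*)$ with the exponential absorbed into $C(\De_0,\RR_0,t_*)$. Nothing is missing; you have simply written out the elementary final step the paper omits.
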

To estimate the second   derivatives  of $\R$  we apply  \eqref{eq:divQ.wave-int}
to the tensor $U=\D\R$. Thus,
\beaa
\int_{\Si_t} |\D^2 \R |^2 
     &\les&  \int_{t_0}^t\int_{\Si_{t'}}   \left (|\D^2\R|\,  |\square( \D\R)|+|\R| |\D\R| |\D^2\R|+\De_0 |\D^2\R|^2\right).\\
     &\les& \int_{t_0}^t\int_{\Si_{t'}}   \left (|\R| |\D\R| |\D^2\R|+\De_0 |\D^2\R|^2\right)
     \eeaa
Hence,
\beaa
\|\D^2\R(t)\|^2_{L^2}&\les& \|\D^2\R(t_0)\|^2_{L^2}+ \De_0  \int_{t_0}^t\|\D^2\R(t')\|^2_{L^2} dt'\\&+& 
\int_{t_0}^t\|\D^2\R(t')\|_{L^2}\|\D\R(t')\|_{L^2}\|\R(t')\|_{L^\infty} dt'\\
&\les&  \|\D^2\R(t_0)\|^2_{L^2}+\De_0  \int_{t_0}^t\|\D^2\R(t')\|^2_{L^2} dt'\\ &+&   \int_{t_0}^t\|\D^2\R(t')\|_{L^2}\|\D\R(t')\|_{L^2} \|\R(t')\|_{L^\infty} dt'.
\eeaa
We therefore,
deduce the following,
\begin{proposition}
\label{thm:deriv-energy-estim2}
Under the same assumptions as in proposition 
\ref{thm:deriv-energy-estim2} we have,
\bea
\|\D^2\R(t)\|_{L^2}^2 &\le& C\big(  \|\D^2\R(t_0)\|_{L^2}^2+ \int_{t_0}^t\|\D\R(t')\|^2_{L^2}\|\R(t')\|_{L^\infty}^2 dt'\big)
\eea
with $C$ a constant depending only on $\De_0$, $\RR_0$ and  $t_*$. 
\end{proposition}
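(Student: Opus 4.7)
The plan is to mimic the derivation of Proposition \ref{thm:deriv-energy-estim} by applying the general energy identity \eqref{eq:divQ.wave-int} to the tensor $U=\D\R$ rather than to $U=\R$ itself. The preliminary task is to produce a wave equation for $\D\R$: differentiating \eqref{eq:wave1} and commuting $\D$ with the covariant d'Alembertian $\square=\D^\mu\D_\mu$ introduces commutator terms of the schematic form $\R\c\D\R$, so that pointwise one has $|\square(\D\R)|\les |\R||\D\R|$.

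Substituting $U=\D\R$ into \eqref{eq:divQ.wave-int} and using this pointwise bound, the three bulk integrands are controlled by $|\D^2\R||\R||\D\R|+|\R||\D\R||\D^2\R|+\De_0|\D^2\R|^2$. Integrating over each slice $\Si_{t'}$ and applying Cauchy--Schwarz with $\|\R(t')\|_{L^\infty}$ pulled out of the triple product yields the integral inequality
\beaa
\|\D^2\R(t)\|_{L^2}^2 &\les& \|\D^2\R(t_0)\|_{L^2}^2+\De_0 \int_{t_0}^t \|\D^2\R(t')\|_{L^2}^2\,dt'\\
&&+\int_{t_0}^t \|\D^2\R(t')\|_{L^2}\|\D\R(t')\|_{L^2}\|\R(t')\|_{L^\infty}\,dt'.
\eeaa
A further Cauchy--Schwarz splitting on the cross term, using $ab \le \tfrac12 a^2+\tfrac12 b^2$ with $a=\|\D^2\R(t')\|_{L^2}$ and $b=\|\D\R(t')\|_{L^2}\|\R(t')\|_{L^\infty}$, followed by Gr\"onwall's inequality in $t$, absorbs all occurrences of $\|\D^2\R(t')\|_{L^2}^2$ on the right into a multiplicative prefactor $e^{C\De_0(t_*-t_0)}$, which may be folded into $C$ via Definition \ref{def:notation}. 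This produces precisely the claimed bound.

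The main obstacle, and the only point requiring real care rather than routine bookkeeping, is the commutator identity $[\D,\square]\R\sim \R\c\D\R$. A priori, commuting covariant derivatives on a curvature tensor could introduce Ricci-type contractions or higher-order terms; in vacuum, however, Ricci vanishes and the Riemann commutator of two $\D$'s acting on $\R$ produces only $\R\c\R$-type contractions, which combine with $\D(\R\star\R)=\R\c\D\R$ to keep $\square(\D\R)$ at the schematic level $\R\c\D\R$ with no uncontrolled $\D^2\R$ piece. Once this structural point is verified, the argument is parallel to the first-derivative case, and crucially $\|\R\|_{L^\infty}$ appears undifferentiated on the right-hand side, which is exactly the form needed to later close the bootstrap via the sup-norm estimate in Theorem \ref{le:GR3}.
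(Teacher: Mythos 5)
Your proposal is correct and follows essentially the same route as the paper: apply the energy identity \eqref{eq:divQ.wave-int} to $U=\D\R$, use that commuting $\D$ with $\square$ in \eqref{eq:wave1} produces only terms of schematic type $\R\c\D\R$ (so $|\square(\D\R)|\les|\R|\,|\D\R|$), and then close with Cauchy--Schwarz and Gr\"onwall, absorbing the exponential factor into the constant $C(\De_0,\RR_0,t_*)$. The structural point you flag about the commutator is exactly the step the paper uses implicitly in passing from $|\D^2\R|\,|\square(\D\R)|$ to $|\R|\,|\D\R|\,|\D^2\R|$.
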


\section{Past null boundaries}\label{sec:past}
The goal of this section is to
review the main result of \cite{injectivity}, concerning  the null boundaries of past
domains of dependence,  and show how they apply to our situation.
Starting with any point $p$ in a subset  $\MM_*=\cup_{t\in [t_0, t_*)}\Sigma_t $ of $\M$,
 we denote by $\JJ^{-}(p)=\JJ^{-}(p; \MM_*)$ the causal past of $p$,
  relative to $\MM_*$,   by $\II^{-}(p)$ 
its  interior and by $\NN^{-}(p)$ its null boundary. 
 In general $\NN^{-}(p)$ is
an achronal, Lipschitz  hypersurface, ruled
by the set of past  null geodesics  from $p$. 
We parametrize these geodesics   with respect to the  future, unit, time-like vector
 $\T_p$. Then,  for every direction $\omega\in \SSS^2$,
 with  $\SSS^2$ denoting the standard sphere in $\RRR^3$,  consider the  
 null vector $\ell_\omega$ in $T_p\M$,  
\be{eq:norm-geodd}
\g(\ell_\omega, \T_p)=1,
\end{equation}
  and associate to it the  past null
geodesic $\ga_\omega(s)$ with initial data $\ga_\omega(0)=p$ and 
$\dot \ga_\omega(0)=\ell_\omega$. 
We further define a null vectorfield $L$ on $\NN^-(p)$ according to 
$$
L(\gamma_\omega(s))=\dot\ga_\omega(s).
$$
 $L$ may only be  smooth almost everywhere on $\NN^{-}(p)$ and 
can be multi-valued on a set of exceptional points. 
We can choose the parameter $s$ in 
such a way so that $L=\dot\ga_\om(s)$ is geodesic and $L(s)=1$.

 For a sufficiently small $\de>0$ the  exponential map  
 $\GG=\GG^{-}_p$  defined by, 
\be{eq:exp-map}
(s,\om)\to \ga_\om(s)
\end{equation}
is a  diffeomorphism from $(0,\de)\times \SSS^2$
to its image in $\NN^{-}(p)$.
 Moreover  for each $\om\in \SSS^2$ either $\ga_\om (s)$ can be
 continued for all positive 
  values of $s$\footnote{ for which $\ga_\om(s)$
   stays in $\MM_*$} or there exists a   value
$ s_*(\om)$ beyond which  the points $\ga_\om(s)$ are no longer on the boundary
 $\NN^{-}(p)$ of $\JJ^{-}(p)$ but rather in its interior, see \cite{HE}.  We call
such points terminal points of $\NN^{-}(p)$. We say that a terminal point $q=\ga_\om(s_*)$ 
is a conjugate terminal point  if the map  $\GG$ is singular at $(s_{*},\om)$. 
A terminal point  $q=\ga_\om(s_*)$ is said to be a cut locus terminal point if  
the map $\GG=\GG^-_p$ is nonsingular at  $(s_{*},\om)$ and there exists another 
null geodesic from $p$, passing through $q$.  

 Thus    $\NN^{-}(p)$ 
 is a smooth manifold at all points except the vertex $p$ and
 the  terminal points of its past null geodesic generators. We denote
by $\TT^{-}(p)$ the set of all terminal points and by 
$\Null(p)=\NN^{-}(p)\setminus \TT^-(p)$ the  smooth  portion of $\NN^-(p)$.
The set $\GG^{-1}(\TT^{-}(p))$
has measure zero relative to the standard  measure $ds da_{\SSS^2}$ of the cone
$[0, \infty)\times \SSS^2$. We will denote by $dA_{\NN^{-}(p)}$ the 
corresponding    measure on $\NN^{-}(p)$. Observe that the definition is not intrinsic,
it depends in fact   on the  normalization condition \eqref{eq:norm-geodd}. 

 \begin{definition}\label{def:rad-inj-t} 
  Given $p\in \MM_*$  we define $i_*^{-}(p)$  to be the supremum over
 all the values $s>0$ for which the exponential map 
$\GG_p^-:(s,\om)\to \ga_\om(s)$ is a global diffeomorphism.
We shall refer to $i_*^{-}(p)$ as the \emph{ past null  radius of injectivity} 
at  $p$ relative to the \emph{geodesic foliation }
defined by \eqref{eq:norm-geodd}. 

We  also  define $i_*^-(p, t)$  (the null  radius of injectivity relative to the $t$-foliation)    to be the
 supremum  over all the values $t(p)-t$, $t<t(p)$, for which the exponential map $\GG=\GG_{p,t}^-$, 
\bea
(t,\om)\to \ga_\om(t)=\ga_\om(s(t))
\eea
 is a global diffeomorphism. 
\end{definition}
 \begin{definition}
 We define $d^-(p,t)$ to be the distance, measured
 with respect to the time parameter $t$, from $p$ to the past boundary of $\MM_*\subset\M$.  \end{definition}

 The following theorem is an immediate 
consequence    of  the Main Theorem II 
proved  in \cite{injectivity}. 
\begin{theorem}\label{thm:injectivity}  Assume that $\MM_*$ is globally 
hyperbolic and verifies the assumptions
 {\bf A1} and {\bf A2} as well as \eqref{eq:const-R0}.  
There exists 
 a positive number
$i_{*}>0$, depending only on    $\De_0$,   $\RR_0$,  and $t_*<0$,
such that, for all $p\in \MM_*$, 
\be{eq:lower-bound-inj}
i_{*}^-(p, t)>\min(i_*, d^{-}(p,t))
\end{equation}
\end{theorem}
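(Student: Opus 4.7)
The plan is to reduce this statement directly to Main Theorem II of \cite{injectivity}, whose hypotheses -- global hyperbolicity of $\MM_*$, assumption {\bf A1} on $\Si_0$, the approximate-Killing bound {\bf A2} on $\pi=\piT$, and the $L^2$ curvature control \eqref{eq:const-R0} on $\Si_0$ -- are precisely what we have posited here. In the cited paper this reduction requires two conceptually independent ingredients, which I would assemble in order as follows.

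First, I would bound the radius of null conjugacy $s_*(\om)$ along each past generator $\ga_\om$ from $p$. The analysis developed in \cite{Causal1}--\cite{Causal3} for null cones from a $2$-surface, and extended by Wang in \cite{Wang} to null cones from a point, shows that the radius of conjugacy admits a lower bound depending only on the curvature flux $\FF^-(p)$ along $\NN^-(p)$. The required flux bound in turn follows from Proposition \ref{prop:main-energy} applied with $X=\T$ together with the positivity of $\Q(\T,\T,\T,\T)$, the bound $|\piT|\les \De_0$ from \eqref{bound.pi}, and the initial control \eqref{eq:const-R0} -- in the same spirit in which Proposition \ref{prop:L2-curv} is obtained. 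This produces a uniform lower bound on the geodesic-parameter radius of injectivity, depending only on $\De_0$, $\RR_0$, $t_*$.

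Second, I would rule out the other mechanism that could destroy injectivity: terminal cut-locus points $q=\ga_\om(s_*)$ at which $\GG_p^-$ remains nonsingular but two distinct past generators from $p$ meet, producing arbitrarily short null loops. Controlling these is the delicate part of \cite{injectivity} and, in my view, the main obstacle of the whole argument; the proof there uses the harmonic coordinate system from Theorem \ref{thm:harmonic} together with the $L^2$ curvature bound and the approximate-Killing condition {\bf A2} to derive a quantitative lower bound, again depending only on $\De_0,\RR_0,t_*$, on the separation of any two coincident generators.

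Finally, I would convert the lower bound on the geodesic radius of injectivity $i_*^-(p)$ into one on the $t$-parameter radius $i_*^-(p,t)$. The map $s\mapsto t$ along each generator $\ga_\om$ is governed by $\frac{dt}{ds}=\g(L,\nab t)=n^{-1}\g(L,\T)$, which by Proposition \ref{prop:max-princ} and \eqref{bound.DT} is comparable to a constant depending only on $\De_0$ and $t_*$. Choosing the universal constant $i_*$ small enough that the translated lower bound in $t$ does not exceed the distance $d^-(p,t)$ to the past boundary of $\MM_*$ then yields the stated inequality $i_*^-(p,t)>\min(i_*, d^-(p,t))$ uniformly in $p\in\MM_*$.
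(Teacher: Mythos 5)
Your proposal is correct and follows essentially the same route as the paper: Theorem \ref{thm:injectivity} is proved there simply by invoking Main Theorem II of \cite{injectivity}, and your account of the internal mechanics of that result (conjugate points controlled through the curvature flux, which is bounded via the Bel-Robinson energy identity and {\bf A2}; cut-locus points through the separate argument of \cite{injectivity}) matches the paper's own description of those references. Two small remarks, neither of which is a gap. First, the paper's entire written proof consists in checking the one hypothesis of Main Theorem II that is not literally among {\bf A1}, {\bf A2}, \eqref{eq:const-R0}: uniform two-sided bounds $N_0^{-1}\le n\le N_0$ on the lapse, which follow from the maximum-principle estimate \eqref{eq:lapse-ineq} of Proposition \ref{prop:max-princ} together with {\bf A2} and $t_*<0$; this is exactly where the dependence on $t_*$ in the statement enters, so your claim that the hypotheses of Main Theorem II are ``precisely'' the posited ones glosses over the step the paper actually carries out (you do invoke Proposition \ref{prop:max-princ}, but only later and for a different purpose). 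Second, Main Theorem II as used here is already formulated for the radius of injectivity relative to the $t$-foliation (the geodesic-foliation results of \cite{Causal1}--\cite{Causal3} and \cite{Wang} having been extended to the $t$-foliation in \cite{injectivity}), so your final conversion from the geodesic radius to the $t$-parameter radius is unnecessary; the paper instead performs the reverse conversion afterwards, deducing the geodesic bound from the $t$-bound via the comparability \eqref{eq:dsdt} of $dt/ds$, which rests on the transport estimate \eqref{eq:null-lapse-ineq} for the null lapse rather than on Proposition \ref{prop:max-princ} and \eqref{bound.DT} alone. Since that comparability works in both directions, your ordering of the steps is equally valid.
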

\begin{proof}:\quad 
According to  the Main theorem II of \cite{injectivity}
and the remark following it, 
 $i_*$ depends only on our main  constants,
 $\De_0$, $\RR_0$  and  a  constant 
$N_0$ which provides  uniform  bounds for the lapse  $n$,
\beaa
N_0^{-1}\le n\le N_0.
\eeaa
The finiteness of $N_0, N_0^{-1}$ follows from \eqref{eq:lapse-ineq} and the assumption  $t_*<0$.
 \end{proof}
 Once we have a lower bound  for $i_*^-(p,t)$ it is
 straightforward to also get a lower bound for  the radius of injectivity 
 $i_*^-(p)$  with respect to  the geodesic foliation. Indeed all we need is to show that $s$ does not vary  much (along $\NN^{-}(p)$)  as a function of $t$ in a time interval of size $1$. This follows immediately
 from the following.
 \begin{lemma}
  There exists a constant $c>0$,
depending only on  $\De_0$,  such that,
  \bea
  c^{-1}\le |\frac{dt}{ds}|\le c.\label{eq:dsdt}
  \eea
 \end{lemma}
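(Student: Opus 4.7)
The plan is to express $|dt/ds|$ as the ratio $f/n$, where $f := \g(L,\T)$, then derive an evolution equation for $f$ along the null geodesic, and finally switch to the $t$-parametrization before integrating. First, working in the transported coordinates of \eqref{eq:g-transported} we have $\T = n^{-1}\pr_t$, so $\T(t) = n^{-1}$ and therefore $L(t) = L^0\, \T(t) = L^0/n$, where $L^0 = -\g(L,\T) = -f$ is the coefficient of $\T$ in the decomposition \eqref{eq:decomposeX}. Since $L$ is past null, $f > 0$; the normalization \eqref{eq:norm-geodd} forces $f(0) = 1$ at the vertex. Hence $|dt/ds| = f/n$, and the lapse $n$ is already controlled two-sidedly by \eqref{eq:max-n}. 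It therefore suffices to show that $f$ remains bounded above and below by constants along the entire geodesic.

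Next I would derive the evolution of $f$. Using $\D_L L = 0$ together with the symmetry of $L^\a L^\b$,
\beaa
\frac{df}{ds} \;=\; \g(L,\D_L\T) \;=\; L^\a L^\b\, \D_\a \T_\b \;=\; \tfrac{1}{2}\, \pi(L,L),
\eeaa
where $\pi = \piT$ enters through its definition $\pi_{\a\b} = \D_\a\T_\b + \D_\b\T_\a$. Nullity of $L$ combined with the decomposition \eqref{eq:decomposeX} gives $|\underline L|_g = f$, hence $|L|_h^2 = 2f^2$. Cauchy--Schwarz on $\pi(L,L) = \pi_{\a\b}L^\a L^\b$ together with the pointwise bound \eqref{bound.pi} then yields $|df/ds| \les \De_0\, f^2$.

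The anticipated obstacle is that a direct Gronwall argument on this quadratic differential inequality in the affine parameter $s$ only yields control for $s \les \De_0^{-1}$, while the affine length of $\NN^{-}(p)$ in the slab $[t_0,t_*]$ may a priori be much larger. The remedy, and the only real idea in the argument, is to reparametrize by $t$: using $ds/dt = n/f$,
\beaa
\left|\frac{d\log f}{dt}\right| \;=\; \frac{n}{f^2}\,\left|\frac{df}{ds}\right| \;\les\; \De_0\, n \;\les\; 1,
\eeaa
a differential inequality for $\log f$ whose right-hand side is uniformly bounded in $f$. Integrating over the $t$-interval traversed by the geodesic, which lies inside $[t_0,t_*] \subset [-1,0)$ and therefore has length less than $1$, yields $|\log f| \les 1$ uniformly along the geodesic, hence $f \sim 1$. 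Combining with the two-sided bound on $n$ from \eqref{eq:max-n} then delivers the desired two-sided estimate $c^{-1} \le f/n = |dt/ds| \le c$.
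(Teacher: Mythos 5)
Your proof is correct, and it is built from the same ingredients as the paper's own argument: your $f=\g(L,\T)$ is precisely the inverse of the null lapse $\varphi^{-1}=\g(\T,L)$ introduced there, your identity $\frac{df}{ds}=\frac 12 \piT(L,L)$ together with the pointwise bound $|\piT|\les \De_0$ is exactly the paper's computation of $\frac{d}{ds}\varphi^{-1}$, and the last step likewise invokes the two-sided lapse bound of Proposition \ref{prop:max-princ} (so, just as in the paper's statement, the constant in fact also depends on $t_*$ through the upper bound $n\le 3/t_*^2$, not on $\De_0$ alone). The one genuine difference is how the resulting differential inequality is integrated: the paper integrates in the affine parameter, obtaining $|\varphi(s)-1|\les \De_0\, s$ as in \eqref{eq:null-lapse-ineq} and then restricting attention to an $s$-interval of size one, whereas you reparametrize by $t$ and integrate $|\frac{d}{dt}\log f|\les \De_0\, n\les 1$ over a $t$-interval of length at most one. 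Your variant is slightly more robust: the logarithmic bound holds uniformly on the whole slab, it does not require the tacit smallness of $\De_0\, s$ needed to make the paper's lower bound $\varphi\ge 1-C\De_0 s$ two-sided, and it avoids having to know in advance that the relevant portion of the null geodesic has affine length of order one.
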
 
 \begin{proof}:
 We    introduce the null lapse,
 \be{eq:null-lapse}
\varphi^{-1}=g(\T,L)
\end{equation}
Observe that $\varphi>0$  with $\varphi(p)=1$. Moreover
\be{eq:dtds}
\frac{dt}{ds}=-(n\varphi)^{-1}
\end{equation}
with $n$ the lapse function
of the $t$ foliation. On the other hand,
we have 
\beaa 
L=-\vphi^{-1}(\T+N)
\eeaa
with $N$ of length $1$ perpendicular to $\T$. 
Now,
\beaa
\frac{d}{ds}\vphi^{-1} &=&\frac d{ds}\,\g(\T, L)=\g(\D_L \T, L)=-\f12\,
 \piT_{LL}\\
&=&\frac{1}{2}\vphi^{-2}( \,\piT_{\T N}+\f12\, \piT_{NN})).
\eeaa
Therefore,
\beaa
|\frac{d}{ds}\vphi ^{-1}| \les\vphi^{-2}  \De_0
\eeaa
from which,
\begin{equation}\label{eq:null-lapse-ineq}
|\vphi(s)-1|\les \De_0 s.
\end{equation}
Thus, for  an interval in $s$ of size $1$ we deduce that $2^{-1} \le \phi(s)\le 2$ 
and therefore, in view of the uniform  bound for $n$ of proposition
\ref{prop:max-princ},  we infer that  there must  exist a constant $c>0$,
depending only on  $\De_0$,  such that \eqref{eq:dsdt} holds.
  
 \end{proof}

\subsection{Geometry of smooth null cones}In this 
subsection we provide additional geometric informations
for  the null  boundaries  $\NN^{-}(p,\, \de)$ 
with $ \de<i^-_*(p,t)$ with $i_*^-(p,t)$  a lower bound for  past  null  injectivity radius 
 with respect to the $t$-foliation. Here  $\NN^{-}(p,\, \de)$  denotes the portion
of $\NN^{-}(p)$ for $t$  between $t(p)$ and $t(p)-\de$.

Let   $S_t$ denote  the $2$ dimensional
space-like  surfaces of intersection between $\Si_t$ 
and $\NN^-(p)$. 
At any point of  $\NN^{-}(p,\, \de)\setminus \{ p\}$ we can define a 
conjugate null vector $\Lb$ with $\g(L, \Lb)=-2$
and such that  $\Lb$ is orthogonal to the leafs   $S_t$. 
In addition we can choose  $(e_a)_{a=1,2}$ tangent  $S_t$
such that together with $L$ and $\Lb$ we obtain a null frame,
\begin{align}
&g(L,\Lb)=-2,\qquad \g(L,L)=\g(\Lb,\Lb)=0,\nn\\
&\g(L,e_a)=\g(\Lb,e_a)=0,\qquad \g(e_a,e_b)=\de_{ab}.\label{eq:null frame}
\end{align}
We denote by $\ga$ the  restriction 
of $\g$ to $S_{t}$  i.e.   $
 \ga(X, Y)=\g(X, Y) $
Endowed with this  metric  $S_t$ is a $2$ dimensional 
 compact riemannian manifold.
 We  denote by $\nabb$  the restriction of $\D$ to $S_t$,
Clearly, for all $X, Y\in T(S_t)$,
\beaa
\nabb_X Y=\D_X Y+\f12 \g(\D_X Y, \Lb) L+\f12 \g(\D_X Y, L) \Lb
\eeaa
We recall, see \cite{Causal1} the  definitions of the following basic geometric
quantities:
\begin{definition} The null second fundamental forms $\chi,\chib$, torsion $\ze$ and
the Ricci coefficient $\etab$ 
 of the foliation $S_t$ are defined as follows: 
\begin{align}
&\chi_{ab}=\g(D_aL\,,\, e_b),\qquad \chib_{ab}=\g(D_a\Lb\,,\, e_b), \label{eq:nullsforms}\\
&\ze_a=\f12 \g(D_a L\,,\,\Lb),\qquad \etab_a=\f12\g(e_a\,,\, D_L\Lb).\label{eq:torsion}
\end{align}
In addition we define $\trch=\ga^{ab} \chi_{ab}$, $\chih_{ab}=\chi_{ab}-\frac 12 \trch \ga_{ab}$
and
$$
\omega=-\frac 14\g(D_{\Lb} \Lb, L),\qquad
\mu=\Lb (\trch) +\frac 12 \trch \trchb +2\omega \trch
$$
\end{definition}
We note that 
\begin{align*}
&\chib_{ab}=-\varphi^2 \chi_{ab} +2\varphi k_{ab},\\
&\etab_a=-\ze_a- (n\varphi)^{-1} e_a\left (n\varphi\right),\\
&\omega=\varphi\, n^{-1} N(n).
\end{align*}
Our conventions imply that
\be
{eq:def-nab}\nabb_X Y=\D_X Y- \f12\chib(X,Y)L- \f12\chi(X, Y)\Lb
\end{equation}
We extend the definition of $\nabb$ to  any  covariant  $S-$ tangent tensor $\pi$  by the usual formula,
\beaa
\nabb_X\pi(Y_1,\ldots, Y_k)&=&X\pi(Y_1,\ldots, Y_k)-\pi(\nabb_XY_1,\ldots, Y_k)-\ldots -\pi(Y_1,\ldots, \nabb_XY_k)
\eeaa 
with $X, Y_1, \ldots Y_k$  $S$-tangent.
Given an $S$-tangent vector-field $X$  we define  $\nabb_LX$ to be
the projection to $S_t$ of $\D_L X$, 
\beaa
\nabb_L X=\D_L X+\f12 \g(\D_L X, \Lb)L
\eeaa
We extend  the definition  to  any  covariant  $S$-tangent tensor $\pi$  by 
\beaa
\nabb_L\pi(Y_1,\ldots, Y_k)&=&L\pi(Y_1,\ldots, Y_k)-\pi(\nabb_LY_1,\ldots, Y_k)-\ldots -\pi(Y_1,\ldots, \nabb_LY_k)
\eeaa 
with $ Y_1, \ldots Y_k$  $S$-tangent.

Given an $S$-tangent tensor $\pi$ wewrite $\na \pi=(\nabb \pi, \nabb_L\pi)$
and 
$$|\na\pi|^2=|\nabb_L\pi|^2+|\nabb\pi|^2.$$
\subsection{ Tangential covariant derivatives of space-time tensors}
In this section we make sense of covariant derivatives of space-time 
tensors, not necessarily $S$-tangent  along a fixed surface $S=S_{t}\subset \NN^{-}(p,\de)$. 

We start by defining a covariant derivative for space-time vector $A_\mu$
defined on $S$. Thus we view $A$ as a section of a vector bundle $T^*\M$ over $S$.
We interprete the covarant derivative $\nabb A$ of $A$ along $S$ as a 1-form 
on $S$ with values in $T^*\M$.  Thus, for every vectorfield $X\in TS$
and any vectorfield $Z$ in $ T\M$,
\beaa
\nabb A(X; Z)=\nabb_X A(Z)=X\big(A(Z)\big)-A(\D_X Z)=\D_XA(Z)
\eeaa
We also write,
$$
(\nabb_X A)_\mu = X^a \D_a A_\mu,\qquad \forall X\in TS. 
$$
 We  define $\nabb^2 A$,  the second   covariant derivatives of $A$ along $S$,
by the formula,
\beaa
\nabb^2 A(X, Y; Z)=(\nabb_X \nabb A)(Y;Z)=X(\nabb A(Y;Z))-\nabb A(\nabb_X Y; Z)-\nabb A(Y;\D_X Z)
\eeaa
or,  for simplicity,
$$
\nabb^2 A_\mu (X,Y) = (\nabb_Y (\nabb_X A))_\mu - (\nabb_{\nabb_Y X} A)_\mu
$$
These definitions can be easily extended to higher
covariant derivatives along $S$ and 
to higher order tensors $A$.

Given a  $A$  an $S$-tangent 1-form on $M$ with values in $TM$ we  define 
$$
\nabb_\L A (X;Y) = L (A(X;Y)) - A(\nabb_L X;Y) - A(X;\D_L Y),
\qquad \forall X\in TS,\quad Y\in TM.
$$
This defintion extends naturally to higher order tensors $A$. Note that 
for a scalar function $A$ on $M$  we have 
$$
\nabb_L A = \D_L A
$$
\subsection{Commutation formula}
In what follows we will need the following
commutation lemma, see \cite{Causal1}.

\begin{lemma}\label{lem:commute}
Let $A_{\mu}$ be a function on $M$ with values in $TM$ verifying the equation
\be{eq:trans-AF}
\nabb_\L A = F
\end{equation}
for some $TM$ valued function $F$. Then, 
\be{eq:trans-AF2}
\nabb_\L (\nabb_a A_\mu) +\chi_{ab} \nabb_b A_\mu= \nabb_a F_\mu + (\ze_a+\etab_a) F_\mu+
 {\R_{\mu}^{\,\,\la}}_{\L a} A_\la.
\end{equation}
\end{lemma}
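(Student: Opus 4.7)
The plan is to reduce the claim to the spacetime Ricci identity by carrying out the computation in a null frame in which $(e_a)_{a=1,2}$ is Fermi-propagated along $L$, i.e.\ $\nabb_L e_a=0$; such a frame is obtained by transporting an initial orthonormal basis on some $S_{t'}\subset\NN^{-}(p,\de)$ along the null generators. Under this choice, the definition of $\nabb_L$ applied to the $S$-tangent, $T^*M$-valued object $\nabb A$ collapses to ordinary spacetime $\D_L$ of its components, so that $(\nabb_L\nabb_a A)_\mu=\D_L\bigl(e_a^\a\D_\a A_\mu\bigr)$, and the entire argument may be pushed through in spacetime terms.

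First I would compute the connection coefficients $\D_{e_a}L$ and $\D_L e_a$ in the null frame. Expanding $\D_{e_a}L$ in the basis $\{L,\Lb,e_b\}$ and inserting the defining relations $\g(\D_{e_a}L,e_b)=\chi_{ab}$, $\f12\g(\D_{e_a}L,\Lb)=\ze_a$, and $\g(\D_{e_a}L,L)=0$ yields $\D_{e_a}L=-\ze_a L+\chi_{ab}e_b$. The Fermi condition kills the $TS$-part of $\D_L e_a$, while the fact that $L$ is geodesic and $\g(L,L)=0$ kill its $\Lb$-part; the sole remaining component comes from $\g(\D_L e_a,\Lb)=-\g(e_a,\D_L\Lb)=-2\etab_a$, and therefore $\D_L e_a=\etab_a L$.

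Next I would apply the Leibniz rule twice, inserting a Ricci identity in between. First,
$(\nabb_L\nabb_a A)_\mu=\D_L\bigl(e_a^\a\D_\a A_\mu\bigr)=(\D_L e_a)^\a\D_\a A_\mu+L^\b e_a^\a\D_\b\D_\a A_\mu=\etab_a F_\mu+L^\b e_a^\a\D_\b\D_\a A_\mu$.
The spacetime Ricci identity for a $1$-form then gives
$L^\b e_a^\a\D_\b\D_\a A_\mu=L^\b e_a^\a\D_\a\D_\b A_\mu+{\R_\mu}^\la{}_{La}A_\la$,
and a second Leibniz step in the reversed order,
$L^\b e_a^\a\D_\a\D_\b A_\mu=e_a^\a\D_\a(L^\b\D_\b A_\mu)-(\D_{e_a}L)^\b\D_\b A_\mu$,
combined with $\nabb_L A=F$ and the explicit form of $\D_{e_a}L$ obtained above, yields
$L^\b e_a^\a\D_\a\D_\b A_\mu=\nabb_a F_\mu+\ze_a F_\mu-\chi_{ab}\nabb_b A_\mu$.

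Collecting the contributions gives
$(\nabb_L\nabb_a A)_\mu=\nabb_a F_\mu+(\ze_a+\etab_a)F_\mu-\chi_{ab}\nabb_b A_\mu+{\R_\mu}^\la{}_{La}A_\la$,
which is the stated identity upon moving the $\chi$ term to the left. The only mildly delicate point, more conceptual than computational, is to verify that the abstract definition of $\nabb_L$ acting on an $S$-tangent tensor with $TM$-valued coefficients really does reduce, under the Fermi transport choice $\nabb_L e_a=0$, to spacetime $\D_L$ applied componentwise; once that is noted, the result is nothing more than the Ricci identity for $\D$ rewritten in the null frame $(L,\Lb,e_a)$.
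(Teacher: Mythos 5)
Your computation is correct: the connection coefficients $\D_{e_a}L=-\ze_a L+\chi_{ab}e_b$ and $\D_L e_a=\etab_a L$ (in a Fermi-propagated frame) are right, the reduction of $\nabb_L(\nabb_a A_\mu)$ to $\D_L(e_a^\a\D_\a A_\mu)$ is consistent with the paper's definition of $\nabb_L$ on mixed $S$-tangent, $T^*M$-valued tensors, and the two Leibniz steps plus the Ricci identity reproduce \eqref{eq:trans-AF2} exactly, with the curvature term's index placement matching up to the usual convention ambiguity the paper leaves unspecified. The paper itself gives no proof, deferring to \cite{Causal1}, and your argument is essentially the standard direct commutation computation carried out there.
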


\subsection{Curvature flux}
\begin{definition}The curvature  flux along\footnote{Given a scalar function $f$
on $\NN^-(p)$ we denote  its integral 
on   $\NN^-(p)$  to be,
 $\int_{\NN^-(p)} f=\int_{t_0}^{t(p)} n dt \int_{S_t} f dA_t=
\int_{\NN^-(p)} f \,\,  dA_{\NN^{-}(p)} .$ .} 
$\NN^-(p)$,  is defined   as follows.
\beaa
\FF(p)=\int_{\NN^-(p)}\Q[\R](\T,\T,\T, L)=\int_{t_0}^{t(p)} n  dt
\int_{S_t}\Q[\R](\T,\T,\T,L)dA_t
\eeaa
with  $dA_t$  the area element of $S_t$. We also let 
\beaa
\FF(p,\de)=\int_{\NN^-(p,\de)}\Q[\R](\T,\T,\T, L)=\int_{t(p)-\de}^{t(p)} n  dt
\int_{S_t}\Q[\R](\T,\T,\T,L)dA_t
\eeaa
to be the curvature flux along $\NN^-(p,\de)$ for $\de<i_*^-(p,t)$.
\end{definition}
The following is an immediate consequence of the energy estimates of section
\ref{sec:energy estimates}, see also \cite{injectivity}.
\begin{proposition}
\label{prop:flux}Under assumptions {\bf A2}  as well as \eqref{eq:const-R0} 
 the  flux of curvature  $ \NN^{-}(p) $  (denoted
  $\FF(p)$),
can be bounded by a uniform constant 
independent of $p$. More precisely, for all $p$ with $t_0<t(p)\le t_*<0$
\beaa
 \FF(p)\le C(t_*,\De_0) \RR_0.
\eeaa
where $C$ is the constant of proposition \ref{prop:L2-curv}
\end{proposition}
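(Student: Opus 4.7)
The plan is to derive the flux bound by applying the Bel--Robinson energy identity to the causal past of $p$, rather than to a time slab as was done in Proposition \ref{prop:L2-curv}. Concretely, set $P^\a = \Q[\R]^{\a\b\ga\de}\T_\b\T_\ga\T_\de$ and compute
\[
\D_\a P^\a = \frac{3}{2}\Q[\R]^{\a\b\ga\de}\piT_{\a\b}\T_\ga\T_\de,
\]
where the Divergence of $\Q[\R]$ itself vanishes by the Bianchi identity and the symmetries of $\Q$ collapse the three $\D\T$ terms into one $\piT$ factor. I would then integrate this divergence over the region $\DD_p := \JJ^-(p)\cap\{t_0\le t\le t(p)\}$ and apply Stokes's theorem.

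The boundary of $\DD_p$ decomposes as $\partial\DD_p = \NN^-(p)\cup D_{t_0}$, where $D_{t_0} := \JJ^-(p)\cap\Si_{t_0}$. On $\Si_{t_0}$ the outward unit normal is past-directed, and a direct computation of the boundary integral yields
\[
\int_{D_{t_0}}\Q[\R](\T,\T,\T,\T)\,dv_g = \int_{\NN^-(p)}\Q[\R](\T,\T,\T,L)\,dA_{\NN^-(p)} + \frac{3}{2}\int_{\DD_p}\Q[\R]_{\a\b\ga\de}\piT^{\a\b}\T^\ga\T^\de\,n\,dv_g\,dt,
\]
using the normalization $\g(L,\T)=1$ on the null boundary and $n$ for the space-time volume factor in transported coordinates. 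By the well-known positivity of $\Q[\R](\T,\T,\T,X)$ for any causal $X$, the left-hand side is nonnegative and the flux integral on the right is exactly $\FF(p)$.

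The initial data term is controlled directly by $\int_{\Si_{t_0}}\Q[\R](\T,\T,\T,\T)\,dv_g \les \|\R(t_0)\|_{L^2}^2 \les \RR_0^2$ via \eqref{eq:const-R0}. For the space-time bulk term, the pointwise inequality $|\Q[\R]_{\a\b\ga\de}\piT^{\a\b}\T^\ga\T^\de|\les |\piT||\R|^2$ combined with assumption {\bf A2} gives
\[
\Big|\int_{\DD_p}\Q[\R]\cdot\piT\,n\,dv_g\,dt\Big| \les \De_0\int_{t_0}^{t(p)}\|\R(t')\|_{L^2}^2\,dt' \les \De_0\,\RR_0^2\,|t_*|,
\]
where the last step invokes Proposition \ref{prop:L2-curv} and uses $n\les 1$ from Proposition \ref{prop:max-princ}. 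Combining these estimates yields $\FF(p)\le C(t_*,\De_0)\RR_0^2$ as claimed, after absorbing the $\RR_0$ vs $\RR_0^2$ discrepancy into the constant (or using $\RR_0\les 1$ in the convention of Definition \ref{def:notation}).

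The main obstacle is justifying Stokes's theorem on $\DD_p$, since $\NN^-(p)$ is only Lipschitz and may contain terminal/cut-locus points $\TT^-(p)$. This is handled by noting, as recorded in Section \ref{sec:past}, that $\TT^-(p)$ has zero $dA_{\NN^-(p)}$-measure and $\Null(p) = \NN^-(p)\setminus\TT^-(p)$ is smooth; one approximates $\DD_p$ from inside by smooth domains $\DD_p^{(\eps)}$ (e.g.\ by retreating slightly along $L$ from the vertex and from the terminal set), applies the classical divergence theorem on each $\DD_p^{(\eps)}$, and passes to the limit using that all integrands are locally integrable. The lower bound on the null injectivity radius from Theorem \ref{thm:injectivity} is not needed for this particular proposition (it is only needed later to control the geometry of $\NN^-(p,\de)$), so the argument goes through purely on the basis of the globally hyperbolic structure of $\MM_*$ and the $L^2$-curvature bound of Proposition \ref{prop:L2-curv}.
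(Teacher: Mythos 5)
Your proposal is correct and follows essentially the same route the paper intends: the paper states the flux bound as an immediate consequence of the energy identity of Proposition \ref{prop:main-energy} with $X=\T$ applied over the causal past of $p$ (deferring details to section 5 of \cite{injectivity}), which is precisely your Stokes argument with the bulk term controlled by {\bf A2} and Proposition \ref{prop:L2-curv}. Your added care about the Lipschitz null boundary and the $\RR_0$ versus $\RR_0^2$ normalization is consistent with the paper's conventions and does not change the argument.
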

\begin{proof}
See section 5 in \cite{injectivity}.
\end{proof}

We  can allso introduce the reduced flux, or geodesic
curvature flux,   
\be{eq:rondR'}
\RR(p)=\big(\int_{t_0}^{t(p)} \int_{S_t}
|\a|^2+|\b|^2+|\rho|^2+|\si|^2+|\bb|^2\,\,\big)^{1/2}
\end{equation}
as well as 
\be{eq:rondR}
\RR(p,\de)=\big(\int_{t(p)-\de}^{t(p)} \int_{S_t}
|\a|^2+|\b|^2+|\rho|^2+|\si|^2+|\bb|^2\,\,\big)^{1/2}
\end{equation}
where $\a,\b, \rho, \si, \bb$ are the   null  components of the Riemann curvature tensor
 relative to  the $S_{t}$ foliation:
\bea
\a_{ab}&=&\R_{L a L b}\,,
\quad \b_a=\f12 \R_{a L\Lb L} ,\quad
\rho=\frac{1}{4}\R_{\Lb L \Lb L}\,\quad\nn\\
\quad
\si&=&\frac{1}{4}\, ^{\star} \R_{\Lb L\Lb L},\quad
\bb_a=\f12R_{a\Lb\Lb L},\quad \aa_{ab} =\R_{\Lb a\Lb b}\
\eea
\begin{proposition}\label{thm:geom-estim} Under the same assumptions
as in proposition \ref{prop:flux} we have,  with a constant $C$ depending
only on $t_*, \De_0$,
\bea\label{eq:Rp}
\RR(p)&\le&  C(t_*,\De_0 )\, \RR_0
\eea
\end{proposition}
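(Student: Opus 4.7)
The plan is to prove the pointwise bound $|\a|^2+|\b|^2+\rho^2+\si^2+|\bb|^2 \les n\,\Q[\R](\T,\T,\T,L)$ on $\NN^-(p)$ and then integrate and invoke Proposition~\ref{prop:flux}. The first step is to decompose $\T$ in the null frame $(L,\Lb,e_1,e_2)$ adapted to the $S_t$-foliation of $\NN^-(p)$. Since $\T\perp S_t$ and the normal bundle of $S_t$ in $\M$ is two-dimensional, spanned by $(L,\Lb)$, the relation $L=-\varphi^{-1}(\T+N)$ together with the normalization $\g(L,\Lb)=-2$ forces $\Lb=-\varphi(\T-N)$, and hence
\[
\T=-\frac{\varphi}{2}\,L-\frac{1}{2\varphi}\,\Lb .
\]

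Substituting this into $\Q[\R](\T,\T,\T,L)$ and expanding by multilinearity, the standard null-frame identifications of the Bel--Robinson tensor give
\[
\Q[\R](\T,\T,\T,L) = c_1(\varphi)|\a|^2 + c_2(\varphi)|\b|^2 + c_3(\varphi)(\rho^2+\si^2) + c_4(\varphi)|\bb|^2,
\]
with each $c_i(\varphi)$ a positive polynomial in $\varphi$ and $\varphi^{-1}$. The crucial structural observation is that no $|\aa|^2$ term appears: a contribution from $\aa_{ab}=\R_{\Lb a\Lb b}$ would require four $\Lb$-entries, which is prevented by the single explicit $L$ in the last slot. This is the very reason the reduced flux $\RR(p)$ involves only the five components listed.

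It remains to bound $\varphi$ above and below uniformly on $\NN^-(p)$. From the transport identity $\frac{d}{ds}\varphi^{-1}=\frac{1}{2}\varphi^{-2}(\piT_{\T N}+\frac{1}{2}\piT_{NN})$ established in the proof of \eqref{eq:null-lapse-ineq}, the pointwise deformation bound $|\piT|\les\De_0$ from assumption \textbf{A2}, and a Gronwall argument along each past null geodesic, one obtains $e^{-C\De_0 s}\le\varphi(s)\le e^{C\De_0 s}$. By the lemma preceding Section~4.1, $s$ is comparable to $|t(p)-t|\le|t_*-t_0|$, which is finite; hence $\varphi\sim 1$ and each $c_i(\varphi)\sim 1$ with constants depending only on $\De_0$ and $t_*$. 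Combined with $n\sim 1$ on $[t_0,t_*]$ (lower bound from \eqref{eq:max-n} and $\|k\|_{L^\infty}\les\De_0$, upper bound from \eqref{eq:max-n} and $t_*<0$), this yields the desired pointwise equivalence. Integrating on $\NN^-(p)$ and applying Proposition~\ref{prop:flux} gives $\RR(p)^2\les\FF(p)\les\RR_0^2$, which is the claimed estimate.

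The only real obstacle is the uniform control of the null lapse $\varphi$ on the entire past cone, which could in principle reach back to $\Si_0$; but the finite time slab $[t_0,t_*]$ together with the pointwise bound on $\piT$ from \textbf{A2} are tailor-made for a Gronwall argument that closes the estimate.
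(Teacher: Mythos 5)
Your argument is essentially the paper's own proof: the same decomposition of $\T$ as a combination of $L$ and $\Lb$ with coefficients $\sim\vphi,\vphi^{-1}$, the same trilinear expansion of $\Q[\R](\T,\T,\T,L)$ into a positive combination of $|\a|^2,|\b|^2,\rho^2+\si^2,|\bb|^2$ (with the fixed $L$-slot excluding any $|\aa|^2$ contribution), followed by control of the null lapse via {\bf A2} and integration against Proposition \ref{prop:flux}. The only small caveat is that your "Gronwall" bound $e^{-C\De_0 s}\le\vphi\le e^{C\De_0 s}$ is not what the transport identity yields --- since $|d\vphi/ds|\les\De_0$ one gets the linear bound $|\vphi-1|\les\De_0 s$, which is exactly \eqref{eq:null-lapse-ineq}, the estimate the paper itself invokes at this point --- but this does not change the structure or validity of the argument relative to the paper's.
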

\begin{proof}
We can express $L, \Lb$ in the form,
\beaa
L=\varphi^{-1}(\T+N),\qquad \Lb=\vphi(\T-N)
\eeaa
where $N$ is  unit normal of $S_t$ on $\Si_t$ and $\vphi$ the null lapse defined by \ref{eq:null-lapse}. Also,
\bea
\T=\vphi L+\vphi^{-1}\Lb
\eea
Therefore, 
\beaa
\Q(\T,\T,\T, L)&=&\vphi^3 \Q(L,L,L,L)+3\vphi^2 \Q(\Lb, L,L,L)\\
&+&3\vphi 
\Q(\Lb, \Lb,L,L)+Q(\Lb, \Lb, \Lb, L)\\
&=&\vphi^3|\a|^2+3\vphi^2|\b|^2+3\vphi(\rho^2+|\si|^2) +|\bb|^2
\eeaa
and the result follows from the bound 
 \eqref{eq:null-lapse-ineq} for $\vphi$.
\end{proof}
We can also get additional estimates for the flux associated to the
first derivatives of the curvature tensor. To see that we go back to 
the derivation of theorem  \ref{thm:deriv-energy-estim}.  We now integrate
\eqref{eq:divQ.wave} in $\JJ^-(p)$ and derive,
\beaa
\int_{\NN^{-}(p)} \Q^{(w)}[\R](\T, L)\le  \int_{\Si_{t_0} \cap \JJ^{-}(p) } \Q^{(w)}[\R](\T, L)+
\int_{\JJ^{-}(p)}|\D^\b(\Q^{(w)}[\R]_{\a\b}\T^\a)|.
\eeaa
Similarly,
\beaa
\int_{\NN^{-}(p,\de)} \Q^{(w)}[\R](\T, L)\le  \int_{\Si_{t(p)-\de} \cap \JJ^{-}(p,\de) } \Q^{(w)}[\R](\T, L)+
\int_{\JJ^{-}(p,\de)}|\D^\b(\Q^{(w)}[\R]_{\a\b}\T^\a)|.
\eeaa
Here $ \JJ^{-}(p)$ is the causal past of $p$ and ${\JJ^{-}(p,\de}$
the portion of  $ \JJ^{-}(p)$ to the future of $\Si_{t(p)-\de}$.
Now,
\beaa
\Q^{(w)}[\R](\T, L)&=&\Q^{(w)}[\R](\vphi L+\vphi^{-1}\Lb, L)
\eeaa
and,
\beaa
\Q^{(w)}[\R](L, L)&=& |\nabb_L\R|^2\\
\Q^{(w)}[\R](\Lb , L)&=&|\nabb \R|^2
\eeaa
We introduce the flux quantities,
\bea
\FF^{(1)}(p)&=&\int_{\NN^{-}(p)}
(|\nabb \R|^2+|\nabb_L\R|^2),\qquad \nn \\
\FF^{(1)}(p, \de)&=&\int_{\NN^{-}(p,\de)}(|\nabb \R|^2+|\nabb_L\R|^2)
\eea

We can therefore reformulate proposition  \ref{thm:deriv-energy-estim}
as follows,
\begin{theorem}
Assume that
{\bf A1}, {\bf A2} hold true. Then,
 for any $0<\de\le i_*$, with $i_*>0$ defined
by theorem \ref{thm:injectivity}, 
\bea
\|\D\R(t)\|_{L^2}   +\sup_{p\in\Si_t}\FF^{(1)}(p,\de)   & \le &C
 \bigg(\|\D\R(t-\de)\|_{L^2}+
 \big(\int_{t-\de}^{t}\|\R(t')\|_{L^\infty} dt'\big)^{1/2} \bigg)\nn\\\
 \label{estimate:FF1}
\eea
with $C$ a constant depending only on $\De_0$, $\RR_0$, $t_*$. 
\end{theorem}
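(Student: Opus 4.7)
The plan is to revisit the proof of Proposition~\ref{thm:deriv-energy-estim}, but to apply the divergence identity \eqref{eq:divQ.wave-int} on the causal past region $\JJ^{-}(p,\de)$ rather than on a time slab. This is exactly the approach sketched in the paragraphs preceding the theorem, and it produces $\FF^{(1)}(p,\de)^2$ as a boundary term. The injectivity lower bound $\de\le i_*$ from Theorem~\ref{thm:injectivity} ensures that $\NN^{-}(p,\de)$ is a smooth hypersurface away from the vertex $p$, so that Stokes' theorem applies.

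Concretely, I would apply the divergence theorem to the vectorfield $\Q^{(w)}[\R]_{\a\b}\T^\a$ on $\JJ^{-}(p,\de)$ with $U=\R$. The boundary of $\JJ^{-}(p,\de)$ consists of the past spacelike piece $\Si_{t(p)-\de}\cap\JJ^{-}(p,\de)$, on which $\Q^{(w)}[\R](\T,\T)=\frac12|\D\R|^2$, together with the null piece $\NN^{-}(p,\de)$, on which the integrand is $\Q^{(w)}[\R](\T,L)$. Decomposing $\T=\vphi L+\vphi^{-1}\Lb$ and using the null-frame identities $\Q^{(w)}[\R](L,L)=|\nabb_L\R|^2$ and $\Q^{(w)}[\R](\Lb,L)=|\nabb\R|^2$, this integrand equals $\vphi|\nabb_L\R|^2+\vphi^{-1}|\nabb\R|^2$; by \eqref{eq:null-lapse-ineq} one has $|\vphi-1|\les\De_0\de$, so its integral is comparable to $\FF^{(1)}(p,\de)^2$. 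In parallel, I would run the same divergence argument on the slab $[t-\de,t]\times\Si$ to bound $\|\D\R(t)\|_{L^2}^2$ from above.

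In both identities the bulk term is controlled via \eqref{eq:divQ.wave-abs}, using the wave equation $\square\R=\R\star\R$ (so $|\square\R|\les|\R|^2$) and the uniform curvature bound $\|\R(t')\|_{L^2}\les 1$ from Proposition~\ref{prop:L2-curv}:
\[
\text{bulk}\les\int_{t-\de}^t\Big(\|\D\R(t')\|_{L^2}\,\|\R(t')\|_{L^\infty}+\De_0\|\D\R(t')\|_{L^2}^2\Big)\,dt'.
\]
Writing $M=\sup_{t'\in[t-\de,t]}\|\D\R(t')\|_{L^2}+\sup_{p\in\Si_t}\FF^{(1)}(p,\de)$ and adding the slab identity to the (supped) flux identity, one obtains
\[
M^2\les\|\D\R(t-\de)\|_{L^2}^2+\De_0\,\de\,M^2+M\int_{t-\de}^t\|\R(t')\|_{L^\infty}\,dt'.
\]
The $\De_0\de M^2$ term is absorbed on the left (either by taking $\de$ small or by a standard Gronwall step, which is permissible since any resulting constant may depend on $t_*$). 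Applying Young's inequality to the last term then closes the estimate, and extracting a square root delivers the claimed bound.

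The main technical obstacle is the conical singularity of $\NN^-(p,\de)$ at the vertex $p$. I would handle this via a standard cut-off: perform the divergence computation on $\JJ^-(p,\de)\setminus\JJ^-(p,\varep)$ and let $\varep\downarrow 0$, using that the sections $S_t$ shrink in area like $(t(p)-t)^2$ so that the truncation boundary contributes $O(\varep^2)$ of a locally bounded density and vanishes in the limit. A second delicate point is ensuring $\NN^-(p,\de)$ remains smooth across the whole slab $[t-\de,t]$, which is precisely the content of Theorem~\ref{thm:injectivity} under the hypothesis $\de\le i_*$.
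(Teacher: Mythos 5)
Your proposal follows essentially the same route as the paper: the paper's own derivation consists precisely of integrating the divergence identity \eqref{eq:divQ.wave} for $U=\R$ over $\JJ^{-}(p,\de)$, reading off the null boundary term via $\T=\vphi L+\vphi^{-1}\Lb$ together with $\Q^{(w)}[\R](L,L)=|\nabb_L\R|^2$ and $\Q^{(w)}[\R](\Lb,L)=|\nabb\R|^2$, and reusing the bulk estimate of Proposition \ref{thm:deriv-energy-estim} for the slab part, exactly as you do. The only discrepancy, that your Young-inequality step produces $\int_{t-\de}^t\|\R(t')\|_{L^\infty}\,dt'$ rather than the printed square root, is harmless: by Cauchy--Schwarz it is dominated by $\de^{1/2}\big(\int_{t-\de}^t\|\R(t')\|^2_{L^\infty}\,dt'\big)^{1/2}$, which is the form of \eqref{estimate:FF1} the paper actually invokes later (the exponent in the stated theorem is a typo, as is the normalization of $\FF^{(1)}$, which you interpret in the intended square-root sense).
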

 \subsection{Estimates for the Ricci coefficients}
 In this section we state without proof a proposition
 concerning the regularity properties of the  Ricci coefficients 
 $\trch$, $\chih$, $\ze$ and $\etab$  
 as well as mass aspect function $\mu$ 
 associated to the $S_t$ foliation. A similar result
 was proved for the corresponding quantities 
 associated with the geodesic foliation 
 in \cite{Causal1}, see also  \cite{Wang}.  The methods  
 used for the geodesic foliations can be easily adapted 
 to prove the result below.

\begin{proposition}\label{prop:coeff}
For any $t\in (t(p)-\de,t(p))$ with $\de<i_*^-(p,t)$ the Ricci coefficients 
$\trch$, $\chih$, $\ze,\etab$ and 
$\mu$ satisfy the following estimates.
\begin{align}
&\sup_{S_t} |\trch-\frac 2{s(t)}|+\|\sup_{t\in (t(p)-\de,t(p))} (t(p)-t) |\nabb\trch|\,\|_{L^2_\omega} \le C,\label{eq:trch}\\
&\sup_{\omega\in {\Bbb S}^2} \int_{t(p)-\de}^{t(p)}\left (|\chih|^2+|\zeta|^2+|\etab|^2)(t,\omega\right)\, dt\le C,\qquad
\|\mu\|_{L^2(\NN^-(p,\de))}\le C\label{eq:mu}
\end{align}
with a constant $C$ depending only on    $\De_0, t_*$  and 
 curvature flux $\RR(p,\de))$. Here the point $\NN^-(p,\de)$ are parametrized by the coordinates 
 $(t,\omega)$ with $\omega\in {\Bbb S}^2$. The volume forms $dA_{\NN^-(p,\de)}$ on $\NN^-(p,\de)$
and $dS_t$ on $S_t$ are respectively 
equivalent to the expressions $(t(p)-t)^2 \, dt\, d\sigma_{{\Bbb S}^2}$ and  
$(t(p)-t)^2 \, d\sigma_{{\Bbb S}^2}$ with $d\sigma_{{\Bbb S}^2}$ denoting the standard volume form
on ${\Bbb S}^2$. Notation $L^2_\omega$ above refers to the $L^2$ norm with respect to the measure
$d\sigma_{{\Bbb S}^2}$. Finally, the quantities $(t(p)-t)$ and $s(t)$ are equivalent.
\end{proposition}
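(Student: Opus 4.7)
The plan is to follow the strategy of \cite{Causal1} and \cite{Wang} for the geodesic foliation, adapted to the $t$-foliation by exploiting the bound \eqref{eq:null-lapse-ineq} together with the comparison $dt/ds=-(n\vphi)^{-1}$ established in the preceding lemma. The first step is to observe that, on the interval $[t(p)-\de,t(p)]$ with $\de<i_*^-(p,t)$, the geodesic parameter $s$ and the elapsed time $t(p)-t$ are comparable up to constants depending only on $\De_0$. This reduces everything to a statement on a smooth null cone with controlled geodesic parameter, so that the curvature flux bound $\RR(p,\de)\les\RR_0$ from Proposition \ref{thm:geom-estim} becomes the sole source of curvature information along the cone.

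For $\trch$ I would start from the vacuum Raychaudhuri equation
\[
\nabb_L \trch + \tfrac{1}{2}(\trch)^2 = -|\chih|^2 - 2\omega\trch,
\]
setting $\psi=\trch-2/s(t)$ and deriving a transport equation for $\psi$ whose inhomogeneity is bilinear in $\psi$ and quadratic in $\chih$. A bootstrap argument, in which one assumes an $L^\infty$ bound on $\psi$ and then recovers it using the $\int|\chih|^2\,dt$ bound produced in the next step, yields the first estimate in \eqref{eq:trch}. The $L^2_\omega$ bound on $(t(p)-t)\nabb\trch$ is obtained by commuting the Raychaudhuri transport equation with $\nabb$, applying Lemma \ref{lem:commute}, and integrating along null generators while controlling the resulting top-order curvature term $\nabb\a$ by the first-derivative flux $\FF^{(1)}(p,\de)$ via a sharp trace estimate of the type developed in \cite{Causal1}.

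For $\chih$, $\ze$ and $\etab$ I would combine the null transport equations
\[
\nabb_L\chih+\trch\,\chih=-\a+\lot,\qquad \nabb_L\ze+\tfrac12\trch\,\ze = -\b+\lot,\qquad \nabb_L\etab+\tfrac12\trch\,\etab=\lot
\]
(in vacuum) with the corresponding Codazzi/Hodge systems on $S_t$, and integrate along each fixed null generator. The pointwise-in-$\om$ estimate $\int(|\chih|^2+|\ze|^2+|\etab|^2)(t,\om)\,dt\les 1$ then reduces, via Gronwall, to the flux bounds on $\a$ and $\b$, for which we use \eqref{eq:Rp}. The mass aspect $\mu$ satisfies a transport equation of the form $\nabb_L\mu+\trch\,\mu=\R\c\trch+\lot$ whose right-hand side is controllable in $L^2(\NN^-(p,\de))$ by $\RR(p,\de)$ and the already-obtained $L^\infty$ bound on $\trch$; integrating this transport equation and using the degenerate initial behavior $\mu\to 0$ at the vertex yields the second estimate in \eqref{eq:mu}.

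The main obstacle, and the reason this is genuinely nontrivial rather than a direct Gronwall argument, is the borderline regularity: the $L^2_\om$ estimate for $(t(p)-t)\nabb\trch$ and the $L^2(\NN^-)$ estimate for $\mu$ lie at the same level as the first-derivative curvature flux, so one cannot afford to lose any derivative on $\R$. The resolution, as in \cite{Causal1}, is to avoid differentiating Ricci coefficients naively and instead identify special combinations (such as $\mu$ itself, and the renormalized $\nabb\trch$) which satisfy transport equations with right-hand sides controllable by the flux alone, and then to invoke sharp trace theorems on $\NN^-(p,\de)$ to pass between $L^2$-in-$t$ and $L^\infty$-in-$\om$ norms. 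Once these sharp trace estimates are transferred from the geodesic foliation to the $t$-foliation using the uniform bounds on $\vphi$ and $n$, the proof of Proposition \ref{prop:coeff} is then formally identical to its analogue in \cite{Causal1,Wang}.
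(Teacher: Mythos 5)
You should first be aware that the paper does not actually prove Proposition \ref{prop:coeff}: it is stated explicitly without proof, with the remark that the analogous estimates were proved for the geodesic foliation in \cite{Causal1} (see also \cite{Causal3}, \cite{Wang}) and that those methods ``can be easily adapted'' to the $t$-foliation. Your outline — comparability of $s$ and $t(p)-t$ via $dt/ds=-(n\vphi)^{-1}$ and \eqref{eq:null-lapse-ineq}, transport equations along the generators, reduction to the curvature flux, renormalization, and sharp trace theorems — is exactly the intended adaptation, so in broad strokes you are following the same route the paper points to.

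There is, however, one concrete step in your sketch that would not prove the stated proposition. For the $L^2_\omega$ bound on $(t(p)-t)\,\nabb\trch$ you propose to commute the Raychaudhuri equation with $\nabb$ and to control the resulting top-order curvature term $\nabb\a$ by the first-derivative flux $\FF^{(1)}(p,\de)$. This contradicts the statement being proved, in which $C$ depends only on $\De_0$, $t_*$ and the curvature flux $\RR(p,\de)$, and it would also damage the way Proposition \ref{prop:coeff} is used later: the bounds \eqref{eq:trch}, \eqref{eq:mu} enter the estimates for $\A$, $\nabb\A$ and the terms $K(p)$, $L(p)$, and these constants must be uniform in terms of $\RR_0$ alone in order for \eqref{pre-final} and the subsequent bootstrap to close; if they depended on $\FF^{(1)}(p,\de)$, i.e. on $\|\D\R\|_{L^2}$ and $\int\|\R\|^2_{L^\infty}$, the argument would become circular. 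The whole point of \cite{Causal1} — which you in fact state correctly in your last paragraph, contradicting the earlier step — is that $\nabb\trch$ and $\mu$ are controlled by the undifferentiated flux alone, by never producing $\nabb\a$: one couples the transport equation for the renormalized quantity $\mu$ with the Codazzi/Hodge system on $S_t$, which expresses $\nabb\chih$ elliptically in terms of $\nabb\trch$, $\b$ and lower order terms, and then uses the sharp trace theorems to pass between the $L^2_t$ and $L^\infty_\omega$ norms. You should excise the $\nabb\a$/$\FF^{(1)}$ route and make the renormalized argument the actual proof. Two smaller points: since $L$ is geodesic, the Raychaudhuri equation reads $\nabb_L\trch+\frac12(\trch)^2=-|\chih|^2$ with no $-2\omega\trch$ term (the paper's $\omega$ is defined through $\Lb$ and does not enter); and the bound for $\etab$ is most naturally obtained from the relation $\etab=-\ze-(n\vphi)^{-1}\nabb(n\vphi)$, which requires control of the angular derivative of the null lapse $\vphi$ (itself obtained from a transport equation), rather than from a clean transport equation with right-hand side $\lot$.
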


\section{Kirchoff-Sobolev  Parametrix}

Earlier in this paper, see Propositions \ref{thm:deriv-energy-estim}, \ref{thm:deriv-energy-estim2}, 
we were able to derive $L^2$ estimates
for derivatives of the curvature tensor which depend on the additional
assumption on the boundedness of the  $L^\infty$ norm of the 
curvature tensor. To estimate the latter we rely on a special  version 
  of the Kirchoff-Sobolev  parametrix introduced in \cite{Kirch}. 
\subsection{Optical function} 
 To make sense of
our Kirchoff-Sobolev formula we need  to define an optical
function\footnote{ i.e. a function which verifies \eqref{eq:Eikonal} below.} $u$, 
in a neighborhood
of $\NN^{-}(p,\de)$, $0<\de<i_*^-(p,t) $,
  such that it  vanishes identically  on $\NN^{-}(p,\de)$.  Here $p$ is an arbitrary point of $\MM_*=\cup_{t\in [t_0, t_*)} \Si_t$. We recall that we have assumed 
  that $\MM_*$ is globally hyperbolic with Cauchy hypersurface $\Si_{t_0}$. 
 We  define $u$ 
uniquely  relative to  the  time-like vector $\T_p$ as 
follows:

 Let  $\ep>0$ a small number and  $\Ga_\ep:(1-\ep, 1+\ep) \to \MM_*$ denote  the
  timelike geodesic  from $p$  such that  $\Ga_\ep(1)=p$ 
 and $\Ga_\ep'(1)=\T_p$. 
From every point $q$  of $\Ga_\ep$ 
let $\NN^{-}(q)$  be the boundary of the past  set of $q$ and $\NN^{-}(q,\de)$
 defined as before.

We now  define $u$ to be the function, constant on each $\NN^{-}(q,\de)$,
such that for $q=\Ga(t)$,
$$u|_{\Null(q)}=t-1.$$
This defines a smooth function 
$u$ which vanishes on $\NN^{-}(p,\de)$  
and verifies the eikonal equation,
\be{eq:Eikonal}
\g^{\a\b}\pr_\a u\pr_\b u =0.
\end{equation}
 Observe that the vectorfield 
$L=\g^{\a\b}\pr_\b u\, \pr_\a$ is null, geodesic and 
verifies the normalization condition,
$$\g(L, \T_p)=\T_p(u)=1.$$
Thus $L$ is the same as the vectorfield $L$ defined earlier in section \ref{sec:past}.

\subsection{Main representation  formula}
We shall next state a result which was proved in \cite{Kirch}, concernig tensorial wave
equations of the form $\square\Psi =F$, with $\Psi$  a $k$-covariant tensorfield. 
Let  $p\in\MM_*$ 
 and  $\de< i_*^{-}(p,t)$.   Let   $\A$ be a tensor-field   of the same order   verifying,
 \be{eq:transp-A}
\D_L \A+\f12 \A \trch =0,\quad  s\A(p)=\J_0 \qquad{\text on}\quad \NN^-(p,\de)
\end{equation}
where $\J_0$ is a fixed k-tensor at $p$, $|\J_0(p)|\le 1$.

\begin{theorem} Let      $p\in\MM_*$ 
 and  $\de< i_*^{-}(p,t)$. Let  $\Psi$ be a $k$ covariant tensor-field
 vanishing identically  for $t\le t(p)-\de$. Then, given $\A$ a solution to the
 transport equations \eqref{eq:transp-A} we have\footnote{Here $\lapp$ denotes the angular  Laplace-Beltrami operator on the $2$-surfaces 
 $S_t$.},
\bea
4\pi \Psi(p)\c \J_0&=&-\int_{\NN^{-}(p,\,\de)}      \Big(   \A\,  \square \Psi
-\frac 12\A\c \R(\c,\c, \Lb\, , L)\c \Psi \Big)
+ \f12\int_{\NN^{-}(p,\,\de)}
\,\mu\,\A\c\Psi \nn\\
&+&\int_{\NN^{-}(p,\,\de)} \big(\lapp \A
+\ze_a \nabb_a \A \big)\, \c\Psi
\label{eq:main-ident}
\eea

\end{theorem}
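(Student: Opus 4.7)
The plan is to prove the identity via integration by parts on the truncated cone $\NN^-(p,\delta)\setminus \NN^-(p,\varepsilon)$ and then pass to the limit $\varepsilon\to 0$, with the singular behavior of $\A$ at the vertex producing the $4\pi\Psi(p)\cdot\J_0$ term on the left. First I would decompose the covariant wave operator relative to the null frame $(L,\Lb,e_1,e_2)$. Using $\g(L,\Lb)=-2$, $\g(e_a,e_b)=\de_{ab}$, one gets schematically
\begin{equation*}
\square\Psi = -\nabb_\Lb\nabb_L\Psi + \lapp\Psi - \tfrac12 \trch\,\nabb_\Lb\Psi - \tfrac12 \trchb\,\nabb_L\Psi - 2\omega\,\nabb_L\Psi + (\R\cdot\Psi)\text{-terms},
\end{equation*}
where the curvature terms come from commuting $\D_\Lb\D_L$ versus $\D_L\D_\Lb$ and contain precisely $\R(\cdot,\cdot,\Lb,L)\cdot\Psi$. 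I would then form $\A\cdot\square\Psi$ and integrate over the truncated cone, writing the volume form as $n\,dt\,dA_{S_t}$.

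Next, I would integrate by parts in the two ``dangerous'' directions. For the $L$-direction pieces, I use the transport equation \eqref{eq:transp-A} together with the area evolution $\partial_s(dA_{S_t}) = \trch\, dA_{S_t}$, so that $\A$ is carefully normalized to make $L$-derivative terms acting on $\Psi$ integrate into boundary contributions only; the $-\tfrac12\trch\,\nabb_\Lb\Psi$ term in particular combines with $\D_L\A$ via integration by parts along the generators to produce the $\mu$-term after invoking the definition $\mu=\Lb(\trch)+\tfrac12 \trch\trchb + 2\omega\trch$. For the $\lapp\Psi$ term I integrate by parts on each sphere $S_t$ (using that $S_t$ is closed, hence no boundary) to land $\lapp\A$ on $\Psi$; the commutator lemma \ref{lem:commute} then accounts for how $\nabb_L$ and $\nabb$ fail to commute, generating the torsion contribution $\zeta_a\nabb_a\A$ on $\Psi$ (with $\etab$ contributions canceling against the symmetric part, using $\etab+\zeta$ identities on the cone).

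The crucial step is the limiting behavior at the vertex. Since $s\A(p) = \J_0$ with $|\J_0|\le 1$ and the transport equation together with $\trch\sim 2/s$ (see \eqref{eq:trch}) force $\A\sim \J_0/s$ near $p$, while the area of $S_{t(p)-\varepsilon}$ behaves like $4\pi s(\varepsilon)^2$ with $s\sim \varepsilon$ up to the constant $c$ from \eqref{eq:dsdt}, the boundary integral on the small sphere contributes in the limit exactly $4\pi\,\Psi(p)\cdot\J_0$ -- this is where the normalization $s\A(p)=\J_0$ and the factor $4\pi$ are synchronized. The outer boundary contribution at $t=t(p)-\delta$ vanishes because $\Psi$ does there by hypothesis, so no interior term survives beyond what is written on the right-hand side.

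The main technical obstacle will be the vertex limit: one must show that all remaining pieces on the shrinking sphere $S_{t(p)-\varepsilon}$, other than the principal $4\pi\Psi(p)\cdot\J_0$ contribution, vanish as $\varepsilon\to 0$, and that the interior integrals converge absolutely. This requires sharp control on the behavior of $\A$, $\trch$, $\chih$, $\zeta$, $\etab$, $\mu$ near $p$ in the asymptotic regime governed by Proposition \ref{prop:coeff}, together with the bounds $|\A|\les 1/s$, $|\nabb\A|\les 1/s$ implied by the transport equation and its angular commutation with $\nabb$. A secondary subtlety is the precise bookkeeping of the curvature terms arising from $[\D,\D]$ commutators: one must verify that they combine exactly into the single compact expression $-\tfrac12 \A\cdot \R(\cdot,\cdot,\Lb,L)\cdot\Psi$ rather than producing residual terms; this is a purely algebraic identification using the symmetries of $\R$ and the null frame.
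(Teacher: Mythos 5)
The paper itself offers no proof of this theorem: it is quoted from \cite{Kirch}, where the representation formula is derived by a distributional argument, pairing $\Psi$ with $\A\,\delta(u)$ ($u$ the optical function), so that the $\delta'(u)$-terms cancel exactly because of the transport equation \eqref{eq:transp-A} together with $\square u=\trch$, the error terms $\frac 12\mu\A+\lapp\A+\ze\c\nabb\A-\frac 12 \R(\c,\c,\Lb,L)\c\A$ all appear at once as the $\delta(u)$-coefficient (essentially $\square\A$ evaluated on the cone after a transversal extension of $\A$, using the transport equation and the commutator $[\D_\Lb,\D_L]\A$), and the vertex $\delta$-function produces $4\pi\Psi(p)\c\J_0$ via the normalization $s\A\to\J_0$. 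Your overall architecture -- truncate the cone, integrate by parts, let the normalization $s\A(p)=\J_0$ against the area $\sim 4\pi s^2$ generate the vertex term -- is the right heuristic, and you correctly flag the vertex limit and the curvature bookkeeping as the danger points.

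There is, however, a genuine gap in the mechanism you propose for the interior terms. First, with the ordering $-\nabb_\Lb\nabb_L\Psi$ the outer derivative is transverse to $\NN^-(p,\de)$ and cannot be integrated by parts on the cone at all; you must first commute to $-\nabb_L\nabb_\Lb\Psi$ (this commutation is indeed the source of $\R(\c,\c,\Lb,L)\c\Psi$). More seriously, once you do integrate $\A\c\nabb_L(\nabb_\Lb\Psi)$ by parts along the generators, the transport equation is designed precisely so that the resulting $\frac 12\trch\,\A\c\nabb_\Lb\Psi$ cancels \emph{identically} against the $\trch\,\nabb_\Lb\Psi$ term of the null decomposition of $\square\Psi$; it does not ``combine into the $\mu$-term''. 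Indeed $\mu$ contains $\Lb(\trch)$, a derivative transverse to the cone, which no integration by parts along $\NN^-(p,\de)$ can produce. If one insists on your direct route, the $O(1)$ vertex contribution actually comes from the boundary term carrying the singular coefficient $\trchb\,\A\sim s^{-2}$ against the area $\sim 4\pi s^2$ (your boundary term $\int_{S_\varepsilon}\A\c\nabb_\Lb\Psi$ tends to zero), and the interior terms emerge with coefficients of the type $\nabb_L\trchb$ rather than $\Lb(\trch)$; recasting them as the stated $\mu$-term then requires the null structure equations (introducing $\rho$ and angular divergences of the torsion), a non-trivial step absent from your sketch. So either the argument must be repaired along these lines, or one should follow the $\A\,\delta(u)$ computation of \cite{Kirch}, where the transport equation, $\square u=\trch$, and the choice of transversal extension of $\A$ deliver exactly the terms in \eqref{eq:main-ident}.
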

We   apply   the theorem  to the tensor-field $\Psi= f(t)\R$
where $0\le f\le 1 $ is a smooth function supported in the
 interval $[t(p), t(p)-\de]$ and identically equal to $1$
 in the interval   $[t(p), t(p)-\de/2]$. 
 Since $\R$ verifies  \eqref{eq:wave1} we have,
\beaa
\square( f\R)= f \R\star \R +(\square f ) \R +2\D^\a f \D_\a \R
\eeaa
with $\R\star \R $ defined in \eqref{eq:notation1}. 
In view of  the theorem above we have the formula,
\bea
4\pi \R(p)\c \J_0&=&I(p)+J(p)+K(p) +L(p)+\EE\label{eq:IJKL}\\
I(p)&=& \int_{\NN^{-}(p,\,\de)}\A\c f(\R\star\R)\nn\\
J(p)&=&-\frac 12 \int_{\NN^{-}(p,\,\de)}\A\c \R(\c,\c, \Lb\, , L)\c f\R\nn\\
K(p)&=&\int_{\NN^{-}(p,\,\de)} \big(\lapp \A
+\ze_a \nabb_a \A \big)\, \c f \R\nn\\
L(p)&=&\f12\int_{\NN^{-}(p,\,\de)}\,\mu\,\A\c f\R\nn\\
\EE&=&\int_{\NN^{-}(p,\,\de)}\big(\,\square f (\A \c \R)+2  \D^\a f(  \A\c   \D_\a \R) \big)
\eea

\subsection{Estimates for  $I(p)$.}

We consider the orthonormal frame $E_0=\T, E_1, E_2, E_3$ which is well defined
everywhere in a neighborhood of the vertex $p$. Clearly the  norm $|U|$,
of an arbitrary  tensorfield $U$, defined according to definition \eqref{norm-U}
 coincides with  the square root of
the sum of squares of all the components of the tensor relative
to this orthonormal frame. It is easy to se that,
$$|\A \c (\R\star \R)|\le |\A|\c |\R\star \R|.$$

On the other hand,  if $e_4=L, e_3=\Lb$ the null pair \eqref{eq:null frame}
 and  denote by $\aa(\R\star \R), \bb(\R\star \R), \rho(\R\star \R), \si(\R\star
\R), 
\b(\R\star \R), \aa(\R\star \R)$ the null decomposition
of $\R\star \R$, as  a Weyl field, relative to  the null pair 
$ e_3, e_4$  we  can easily check that,
\bea
|\R\star\R|^2&\les &|\aa(\R\star \R)|^2+|\bb(\R\star \R)|^2+|\rho(\R\star
\R)|^2\label{eq:RstarR-null}\\
 &+&|\si(\R\star \R)|^2+|\b(\R\star \R)|^2+|\a(\R\star \R)|^2
\eea
Indeed if we denote the Weyl field $\R\star \R$ by $W$ and introduce
its electric and magnetic parts $E_{ij}=W_{i0j0}$, $H_{ij}=^\star W_{i0j0}$
we have,
$$ |W|^2=|E|^2+|H|^2.$$
Indeed, we have,
$$\begin{array}{ll}
W_{ijk0}=-\in_{ij}^{\,\, s} H_{sk},&\qquad ^\star W_{ijk0}=\in_{ij}^{\,\, s} E_{sk}\\
W_{ijkl}= -\in_{ijs}\in_{klt} E^{st}, &\qquad ^\star W_{ijkl}= -\in_{ijs}\in_{klt} H^{st}
\end{array}
$$
On the other hand, in terms of the null decomposition of 
$W$, relative to $ e_1, e_2, e_3, e_4$,
$$
\begin{array}{ll}
E_{ab}=\frac 1 4 \a_{ab}+\frac 1 4 \aa_{ab}-\f12 \rho \de_{ab} &\quad H_{ab}=-\frac 1 4
^\star\a_{ab}+\frac 1 4 ^\star\aa_{ab}-\f12 \si \de_{ab}\\
E_{aN}=\f12 \,\,\bb_a+\f12 \b_a &\quad H_{aN}=\f12\,\, ^\star\bb_a- \f12\,\,
^\star\b_a\\ E_{NN}=\rho&\quad H_{NN}=\si
\end{array}
$$
Hence,
$$|W|^2=|E|^2+|H|^2\les |\a|^2+|\b|^2+|\rho|^2+|\si|^2 +|\bb|^2+|\aa|^2$$
which proves \eqref{eq:RstarR-null}.
We now estimate the right hand side of  \eqref{eq:RstarR-null}.
Clearly any null component of  $\R\star \R$ can be expressed
as a quadratic expression
in the null components of 
$\R$. We  observe  that no null component of $\R\star\R$
 can be quadratic in $\aa$. This can be easily proved by 
a signature consideration. Indeed  we assign signature
$2$ to $\a(\R)$, signature 1 to $\b(\R)$ signature $0$ to $\rho(\R)$ and $\si(\R)$,
signature $-1$ to $\bb(\RR)$ and signature $-2$ to $\aa(\R)$. 
Similarily  we assign  signature $2$ to $\a(\R\star \R)$, signature 1 to 
$\b(\R\star \R)$ signature $0$ to
$\rho(\R\star \R)$ and $\si(\R\star \R)$, signature $-1$ to $\bb(\RR\star \R)$ 
and signature $-2$ to
$\aa(\R\star \R)$. It is easy to check that in the algebraic formula expresses  the
null components of $\R\star \R$ in terms of a quadratic form  
in the null components of $\R$
the total signature of each term 
must be the same as  the signature
of the corresponding null component of 
$\R\star \R$. Thus,
\beaa
\aa(\R\star \R)&=&(\R\star\R)_{a3b3}=\Qr[\aa, (\rho,\si)]+\Qr[\bb, \bb]
\eeaa
where $\Qr[\,, \,] $ denotes a  simple quadratic expression  in the null components
of $\RR$. Similarily,
\beaa
\bb(\R\star \R)&=&\Qr[\aa, \b]+\Qr[\bb, (\rho,\si)]\\
\rho(\R),\si(\R)&=&\Qr[\aa, \a
]+\Qr[\bb, \b]+\Qr[(\rho,\si), (\rho,\si)]\\
\b(\R\star \R)&=&\Qr[\bb, \a]+\Qr[ (\rho,\si), \b])\\
\a(\R\star \R)&=&\Qr[(\rho,\si), \a]+\Qr[ \b, \b ])
\eeaa
We now introduce the notation 
\be{eq:funny-norm}
(|\R|^\dagger)^2= |\a|^2+|\b|^2+|\rho|^2+|\si|^2 +|\bb|^2
\end{equation}
and deduce the following inequality,
\be{eq:important-pointwise}
|\R\star\R|\les |\R |^\dagger\c|\R|
\end{equation}
 
We are now ready to estimate the term $I(p)$. Using
the bounds for  $ \RR(p)$  of theorem \ref{thm:geom-estim} and 
 $\RR(p,\de)\le \RR(p)$ we derive,
 \beaa
 |I(p)|&\les & \int_{\NN^{-}(p,\,\de)}|\A|  |\R |^\dagger\c |\R| \\
 &\les&\big(\int_{\NN^{-}(p,\,\de)} |\R |^\dagger \big)^{1/2}  \big(\int_{\NN^{-}(p,\,\de)}  |\A|^2 |\R|^2\big)^{1/2}\\
 &\les& \RR(p,\de)
 \big( \int_{t(p)-\de}^{t(p)}\|\R(t)\|^2_{L^\infty}\|\A(t)\|^2_{L^2(S_t)} dt\big)^{1/2}\\
  &\les& \RR(p,\de) \big(\int_{t(p)-\de}^{t(p)}\|\A(t)\|^2_{L^2(S_t)}\big)^{1/2} \sup_{t(p)-\de<t< t(p)}
  \|\R(t)\|_{L^\infty(S_t)}
 \eeaa
 We therefore have
 \bea
 |I(p)|&\les & \RR(p,\de) \,   \|\A\|_{L^2(\NN^-(p,\de))}  \, \|  \R\|_{L^\infty(\NN^-(p,\de))} 
 \label{eq:main-error1}
 \eea

\subsection{Estimate for $J(p)$.}
 In view of  the fact that, 
$$\int_{\NN^-(p, \de)} | \R(\c,\c, \Lb\, , L)|^2\les \RR(p,\de)^2,
$$
we deduce, proceeding exctly as for $I$,

Therefore,
\be{eq:main-error2}
|J(p)|\les  \RR(p,\de) \,   \|\A\|_{L^2(\NN^-(p,\de))}  \, \|  \R\|_{L^\infty(\NN^-(p,\de))} 
\end{equation}

\subsection{Estimates for $L(p)$.}
We proceed as follows,
\bea
|L(p)|&\les& \int_{\NN^{-}(p,\,\de)}\,|\mu|\, |\A|\,|\R|
\les \big( \int_{\NN^{-}(p,\,\de)}\,|\mu|\,\big)^{1/2}
\big( \int_{\NN^{-}(p,\,\de)}\, |\A|^2\,|\R|^2\,\big)^{1/2}\nn\\
&\le& \|\mu\|_{L^2(\NN^{-}(p,\de))}  \,   \|\A\|_{L^2(\NN^-(p,\de))}  \, \|  \R\|_{L^\infty(\NN^-(p,\de))} 
\label{eq:main-error3}
\eea

\subsection{The term $K(p)$}
Integrating by parts we  rewrite 
$K(p)$  as follows,
\bea
K(p)&=&- \int_{\NN^{-}(p,\,\de)} \nabb  \A\cdot \nabb \R
+  \int_{\NN^{-}(p,\,\de)}\ze_a \D_a \A \, \c\R\nn\
\eea
We now estimate as follows,
\beaa
|K(p)|&\les&\int_{\NN^{-}(p,\,\de)} \big(|\nabb  \A||\nabb\R|
+|\ze| |\nabb \A |\, |\R|\big)\\
&\les& \|\nabb A\|_{L^2(\NN^{-}(p,\,\de))} \big( \|\nabb \R\|_{L^2(\NN^{-}(p,\,\de))}
+\|\ze \R\|_{L^2(\NN^{-}(p,\,\de))}\big)
\eeaa
Therefore,
\bea
|K(p)|&\les&\|\nabb \A\|_{L^2(\NN^{-}(p,\,\de))}\big( \|\nabb \R\|_{L^2(\NN^{-}(p,\,\de))}+\|\R\|_{L^\infty (\NN^{-}(p,\,\de))}\|\ze\|_{L^2(\NN^{-}(p,\,\de))}\big)\nn
\eea

 Going back to
\eqref{eq:IJKL} and 
using  the estimates for $I,J,K,L$  obttained above we derive,
\begin{proposition} The following estimate holds  for all $p \in \MM_*$
 and $0<\de < i_*^{-}(p,\de)$.
\beaa
|\R(p)|&\les&\EE+
 \|  \R\|_{L^\infty(\NN^-(p,\de))}
  \|  \A\|_{L^2(\NN^-(p,\de))} \big(\RR(p,\de)+\|\mu\|_{L^2(\NN^{-}(p,\de))}\big)\\
&+&\|\nabb \A\|_{L^2(\NN^{-}(p,\,\de))}\big( \|\nabb \R\|_{L^2(\NN^{-}(p,\,\de))}
+ \|  \R\|_{L^\infty(\NN^-(p,\de))} \|  \ze \|_{L^2(\NN^-(p,\de))}\big) 
\eeaa
\end{proposition}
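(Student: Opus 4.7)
The plan is to read off the estimate by dualizing the representation formula \eqref{eq:IJKL} and substituting the four pointwise/integral bounds \eqref{eq:main-error1}, \eqref{eq:main-error2}, \eqref{eq:main-error3} together with the $K(p)$ bound derived from integration by parts just above the statement.

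First I would fix $p\in\MM_*$ and select the initial datum $\J_0$ at $p$ in the transport equation \eqref{eq:transp-A} to be the $h$-dual unit tensor to $\R(p)$, so that $\R(p)\cdot\J_0=|\R(p)|$ with $|\J_0(p)|\le 1$. Such a $\J_0$ exists because the pointwise norm \eqref{norm-U} is induced by the positive definite Riemannian metric $h$, hence by a straightforward Cauchy--Schwarz argument. With this choice the identity \eqref{eq:IJKL} becomes the scalar bound
\[
4\pi\,|\R(p)| \;\le\; |I(p)|+|J(p)|+|K(p)|+|L(p)|+|\EE|,
\]
and it remains only to substitute the four estimates in hand.

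Second, I would insert \eqref{eq:main-error1} and \eqref{eq:main-error2} for $I(p)$ and $J(p)$ (each giving $\RR(p,\de)\,\|\A\|_{L^2(\NN^-(p,\de))}\,\|\R\|_{L^\infty(\NN^-(p,\de))}$) and \eqref{eq:main-error3} for $L(p)$ (giving $\|\mu\|_{L^2(\NN^-(p,\de))}\,\|\A\|_{L^2(\NN^-(p,\de))}\,\|\R\|_{L^\infty(\NN^-(p,\de))}$). Grouping these three terms produces the common factor $\|\A\|_{L^2(\NN^-(p,\de))}\,\|\R\|_{L^\infty(\NN^-(p,\de))}$ multiplied by $\bigl(\RR(p,\de)+\|\mu\|_{L^2(\NN^-(p,\de))}\bigr)$, which is exactly the first line of the claimed estimate. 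The $K(p)$ bound, obtained by transferring one tangential derivative from $\A$ onto $\R$ and using Cauchy--Schwarz in $L^2(\NN^-(p,\de))$, already matches the second line verbatim, with the two pieces $\|\nabb\R\|_{L^2}$ and $\|\ze\|_{L^2}\,\|\R\|_{L^\infty}$ arising from the two terms after integration by parts. The remaining term $\EE$, which encodes the contribution from the cutoff $f$ supported in $[t(p)-\de,t(p)-\de/2]$, is simply kept as an error term to be handled later via the energy estimates of Section 3 on that time slab.

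I do not anticipate any genuine obstacle: the proposition is essentially a bookkeeping step collating the four estimates already derived. The only point requiring mild care is the dualization in the first step, which must be carried out with a fixed normalization $|\J_0(p)|\le 1$ compatible with the way $\A$ is normalized in its singular initial condition $s\A(p)=\J_0$, so that the hidden constants in $\lesssim$ are independent of $\J_0$. Establishing the finiteness and sharp size (of order $\de^{1/2}$) of $\|\A\|_{L^2(\NN^-(p,\de))}$ and $\|\nabb\A\|_{L^2(\NN^-(p,\de))}$ — which is what will make the right-hand side actually useful — is a separate task for the next section and is not needed for the statement itself.
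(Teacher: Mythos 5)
Your proposal is correct and follows essentially the same route as the paper: the proposition is obtained by choosing $\J_0$ (dual to $\R(p)$, $|\J_0|\le 1$) in the representation formula \eqref{eq:IJKL} and substituting the bounds \eqref{eq:main-error1}, \eqref{eq:main-error2}, \eqref{eq:main-error3} together with the integration-by-parts estimate for $K(p)$, keeping $\EE$ as an error term. Your remark that the bounds on $\A$ must be uniform in $\J_0$ is the right point of care and is consistent with how the paper proceeds.
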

We now recall that according to \eqref{eq:Rp}
$$
\RR(p,\de)\les \RR(p)\le C(t_*,\De_0) \RR_0.
$$
Furthermore, by \eqref{eq:mu}
$$
\|\mu\|_{L^2(\NN^{-}(p,\de))}\le C(t_*,\De_0,\RR_0) 
$$
and 
\beaa
\|  \ze \|_{L^2(\NN^-(p,\de))}&=&\left (\int_{t(p)-\de}^{t(p)} n dt \int_{S_t} |\ze|^2 dA_t\right)^{\frac 12}\\
&\le& \de \sup_{\omega_{\Bbb S}^2} \left(\int_{t(p)-\de}^{t(p)} |\ze(t,\omega)|^2\, dt\right)^{\frac 12}\\&\le&
\de C(t_*,\De_0,\RR_0).
\eeaa
Finally, in the next section we will establish that
$$
\|( t(p)-t) \A\|_{L^\infty(\NN^-(p,\de))}\le C,\qquad \|\nabb \A\|_{L^2(\NN^{-}(p,\,\de))}\le C
$$
with a constant $C=C(t_*,\De_0,\RR_0)$. This in particular implies that
$$
\|  \A\|_{L^2(\NN^-(p,\de))}\le \de^{\frac 12} C(t_*,\De_0,\RR_0).
$$
Putting this all together we deduce,
\beaa
\|\R(t)\|_{L^\infty}&\les&\EE+\de^{\frac 12} \sup_{p\in \Si_t} \|  \R\|_{L^\infty(\NN^-(p,\de))}
+\sup_{p\in \Si_t} \|\nabb \R\|_{L^2(\NN^{-}(p,\,\de))}
\eeaa
On the other hand, according to \eqref{estimate:FF1}
we have,
\beaa
\sup_{p\in \Si_t} \|\nabb \R\|_{L^2(\NN^{-}(p,\,\de))}=\sup_{p\in\Si_t}\FF^{(1)}(p,\de)\les 
 \|\D\R(t-\de)\|_{L^2}+ \big(\int_{t-\de}^{t}\|\R(t')\|^2_{L^\infty} dt'\big)^{1/2} 
\eeaa
Therefore,
\begin{equation}
\|\R(t)\|_{L^\infty}\les\EE+\|\D\R(t-\de)\|_{L^2}+ \de^{\frac 12}
\sup_{t'\in(t-\de,t)}\|\R(t')\|_{L^\infty}\label{pre-final}
\end{equation}
\subsection{Estimates for the error term $\EE$}
We  first observe that,
\beaa
|\square t|\les 1,\qquad  |\D t|\les 1.
\eeaa
Therefore, since $f'$ and $f''$ vanish for $|t-t(p)|\le \de/2$
and 
\beaa
\|f'\|_{L^\infty} \le \de^{-1}, \qquad  \|f''\|_{L^\infty} \le \de^{-2}
\eeaa
we derive,
\beaa
|\EE|&\les&\de^{\frac 12}
\|\A\|_{L^2(\NN^{-}(p,\de)} \sup_{t'\in[t-\de, t-\de/2]}
\big(\de^{-2}   \|\R(t')\|_{L^2(S_t')}dt+ \de^{-1}       \|\D \R(t')\|_{L^2(S_t')} \big)dt'\\
&\les&\sup_{t'\in[t-\de, t-\de/2]}\big(\de^{-1}   \|\R(t')\|_{L^2(S_t')}dt+       \|\D \R(t')\|_{L^2(S_t')} \big)
\\
&\les& \sup_{t'\in[t-\de, t-\de/2]}\big(\de^{-1} \left (\|\R(t')\|_{L^2(\Si_{t'})}+\|\D\R(t')\|_{L^2(\Si_{t'})}\right)
+ \|\D^2\R(t')\|_{L^2(\Si_{t'})}\big)
\eeaa
The last step can be justified by a simple  integration by parts argument.
\subsection{Final Estimate}
Returning to \eqref{pre-final}, taking a supremum in $t$ over an interval of size $\de$ 
and using the Sobolev  inequality  of corollary \ref{cor:Sobolev},
$$
\|\R(t')\|_{L^\infty}\le C\left( \|\R(t')\|_{L^2}+\|\D\R(t')\|_{L^2}+ \|\D^2\R(t')\|_{L^2}\right),
$$
we obtain,
\begin{proposition}
\label{prop:funny}
There exists  a positive $\de>0$, sufficiently small but depending only on $\De_0, \RR_0, t_*$,  such that 
the following estimate holds true,
\beaa
\|\R(t)\|_{L^\infty}& \le& C\de^{-1} \sup_{t'\in[t-2\de, t-\de/2]}\left( \|\R(t')\|_{L^2}+\|\D\R(t')\|_{L^2}+ \|\D^2\R(t')\|_{L^2}\right)
\eeaa
with $C$ a constant depending only on $\De_0$, $t_*$, $\RR_0$.
\end{proposition}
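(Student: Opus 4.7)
The plan is to take the estimate \eqref{pre-final} — already established in the paper — together with the bound for $\EE$ derived just before it, and then to deal with the fact that the $L^\infty$-supremum appearing on the right-hand side is over the time interval $(t-\de,t)$, which overlaps the target time $t$ and extends past $t-\de/2$. First I would rewrite \eqref{pre-final}, using the estimate for $\EE$, as
$$
\|\R(t)\|_{L^\infty}\le C_1\de^{-1} B(t,\de)+C_1\de^{1/2}\sup_{s\in(t-\de,t)}\|\R(s)\|_{L^\infty},
$$
where $B(t,\de):=\sup_{t'\in[t-\de,\,t-\de/2]}\bigl(\|\R(t')\|_{L^2}+\|\D\R(t')\|_{L^2}+\|\D^2\R(t')\|_{L^2}\bigr)$ and $C_1=C_1(\De_0,\RR_0,t_*)$.

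Next I would apply this same inequality at every time $t'\in[t-\de,t]$ instead of $t$. For such $t'$, the window $[t'-\de,t'-\de/2]$ sits inside $[t-2\de,t-\de/2]$, so $B(t',\de)\le B^\star$ with
$$
B^\star:=\sup_{s\in[t-2\de,\,t-\de/2]}\bigl(\|\R(s)\|_{L^2}+\|\D\R(s)\|_{L^2}+\|\D^2\R(s)\|_{L^2}\bigr),
$$
and the corresponding sup in the error term lies in $(t-2\de,t)$. Setting $X:=\sup_{t'\in[t-\de,t]}\|\R(t')\|_{L^\infty}$ and taking the supremum over $t'\in[t-\de,t]$ gives
$$
X\le C_1\de^{-1}B^\star+C_1\de^{1/2}\sup_{s\in(t-2\de,\,t)}\|\R(s)\|_{L^\infty}.
$$
Splitting $\sup_{s\in(t-2\de,t)}\le\sup_{s\in(t-2\de,\,t-\de]}+X$ and requiring $\de$ small enough that $C_1\de^{1/2}\le 1/2$, I absorb the $X$-contribution on the right and obtain
$$
X\le 2C_1\de^{-1}B^\star+2C_1\de^{1/2}\sup_{s\in(t-2\de,\,t-\de]}\|\R(s)\|_{L^\infty}.
$$

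The remaining $L^\infty$ supremum is now over a time interval strictly separated from $t$ (contained in $[t-2\de,t-\de/2]$). Applying Corollary \ref{cor:Sobolev} (used componentwise on $\R$, or equivalently on $|\R|$, since the scalar Sobolev argument extends to tensorfields with $\nab^2$ replaced by $\nabla^2$ in the harmonic chart of Theorem \ref{thm:harmonic}) bounds $\|\R(s)\|_{L^\infty}$ by $\|\R(s)\|_{L^2}+\|\D\R(s)\|_{L^2}+\|\D^2\R(s)\|_{L^2}$, and hence by $CB^\star$ uniformly in $s\in(t-2\de,t-\de]$. Inserting this and shrinking $\de$ once more to absorb the $C_1\de^{1/2}\cdot C$ term into the $\de^{-1}$ coefficient yields $X\le C_2\de^{-1}B^\star$, and the conclusion follows from $\|\R(t)\|_{L^\infty}\le X$.

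The one substantial obstacle is the absorption step: the term $\de^{1/2}\sup_{s\in(t-\de,t)}\|\R(s)\|_{L^\infty}$ produced by $I(p)$ in the Kirchhoff–Sobolev representation cannot be removed locally in $t$, because the interval $(t-\de,t)$ reaches the target time $t$ itself; it is only through the trick of taking the supremum of the inequality over $t'\in[t-\de,t]$ and then exploiting the smallness of $\de^{1/2}$ relative to $1$ that one can close the loop. Once this is done, the portion of the sup at strictly earlier times is tame — it is not absorbed but simply converted into $L^2$ data of $\R,\D\R,\D^2\R$ by Sobolev embedding — which is exactly what the statement requires on the right-hand side.
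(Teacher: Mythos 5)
Your proposal is correct and follows essentially the same route as the paper: the paper's own (terse) proof of Proposition \ref{prop:funny} likewise combines \eqref{pre-final} with the bound on $\EE$, takes a supremum in $t$ over an interval of size $\de$ to absorb the $\de^{1/2}\sup\|\R\|_{L^\infty}$ term for small $\de$, and applies the Sobolev inequality of Corollary \ref{cor:Sobolev} to convert the remaining earlier-time $L^\infty$ curvature norms into the $L^2$ quantities over $[t-2\de,t-\de/2]$. Your write-up simply makes the absorption bookkeeping explicit, at the same level of rigor as the paper (e.g.\ both use the scalar Sobolev inequality on the tensor $\R$ without further comment).
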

We now return to  propositions  \ref{thm:deriv-energy-estim} and \ref{thm:deriv-energy-estim2}. Combining them  with 
 the proposition above we deduce, 
 \beaa
\|\D\R(t)\|_{L^2}^2&\le& C \big(\|\D\R(t-\de/2)\|_{L^2}^2+\int_{t-\de/2}^t \|\R(t')\|_{L^\infty}^2  dt' \big)\\
&\le& C\de^{-1} \sup_{t'\in[t-\de, t-\de/2]}\left( \|\R(t')\|_{L^2}+ \|\D\R(t')\|_{L^2}^2+ 
\|\D^2\R(t')\|_{L^2}^2\right)\\
\|\D^2\R(t)\|_{L^2}^2 &\le& C\big(  \|\D^2\R(t-\de)\|_{L^2}^2+ \int_{t-\de/2}^t\|\D\R(t')\|^2_{L^2}\|\R(t')\|_{L^\infty}^2 dt'\big)\\
&\le&  C  \|\D^2\R(t-\de)\|_{L^2}^2\\&+& C\de^{-2} \sup_{t'\in[t-\de, t-\de/2]}
\left( \|\R(t')\|_{L^2}+ \|\D\R(t')\|_{L^2}^2+ \|\D^2\R(t')\|_{L^2}^2\right)^2
\eeaa
Consequently, for some $C$ depending only on $\De_0, \RR_0$ and $t_*$,
\bea
\|\R(t)\|_{H^2}\les C \de^{-1}  \sup_{t'\in[t-\de, t-\de/2]}\|\R(t')\|_{H^2}^2
\eea
where,
\bea
\|\R(t)\|_{H^2}=\|\R(t)\|_{L^2}+\|\D\R(t)\|_{L^2}+\|\D^2\R(t)\|_{L^2}
\eea
Iterating the estimate as many times as needed, in steps of size $\de/2$,  we derive,
\begin{theorem}
\label{mainthm-energy}
Assume that $(\M,\g)$
is a globally hyperbolic extension of $\Si_0$ verifying  the assumptions {\bf A1} and {\bf A2}.
Let $\MM_*=\cup_{t\in[ [-t_0, t_*)} \Si_t \subset \M$ with $t_0=-1$. 
There exists a constant $C>0$ depending   only on $\De_0, t_* $ and initial data  $\|\R(t_0)\|_{H^2}$
such that,
\bea
\sup_{t\in [t_0, t_*)}\|\R(t)\|_{H^2}\le C
\eea 

\end{theorem}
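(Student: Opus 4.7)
The plan is to close a bootstrap loop combining three ingredients already established earlier in the paper: the Kirchhoff--Sobolev pointwise estimate of Proposition \ref{prop:funny}, the $L^2$ curvature-derivative estimates of Propositions \ref{thm:deriv-energy-estim} and \ref{thm:deriv-energy-estim2}, and the Sobolev embedding of Corollary \ref{cor:Sobolev} (which trades $\|\R\|_{L^\infty}$ for $\|\R\|_{L^2}+\|\D\R\|_{L^2}+\|\D^2\R\|_{L^2}$). Define $N(t):=\|\R(t)\|_{L^2}+\|\D\R(t)\|_{L^2}+\|\D^2\R(t)\|_{L^2}$ and $M(t):=\sup_{t_0\le t'\le t} N(t')$; our target is a uniform bound for $M$ on $[t_0,t_*)$.

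First, I would feed Proposition \ref{prop:funny} into Propositions \ref{thm:deriv-energy-estim} and \ref{thm:deriv-energy-estim2}. Proposition \ref{thm:deriv-energy-estim} gives
\[
\|\D\R(t)\|_{L^2}^2 \les \|\D\R(t-\tfrac{\de}{2})\|_{L^2}^2 +\int_{t-\de/2}^t\|\R(t')\|_{L^\infty}^2\,dt',
\]
and substituting the $L^\infty$ bound of Proposition \ref{prop:funny} produces a time-shifted control of $\|\D\R(t)\|_{L^2}$ by $M$ on a slightly earlier interval; the same procedure, applied with Proposition \ref{thm:deriv-energy-estim2}, controls $\|\D^2\R(t)\|_{L^2}$. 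Combined with the $L^2$-curvature bound of Proposition \ref{prop:L2-curv}, this yields a recursive inequality of the schematic form $N(t)\le \Phi\bigl(M(t-\de/2)\bigr)$ for a step $\de=\de(\De_0,\RR_0,t_*)$.

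Next, I would iterate the recursive inequality in steps of size $\de/2$. Since $t_*-t_0$ is finite and $\de$ depends only on the fundamental constants $\De_0,\RR_0,t_*$, only finitely many iterations $n\lesssim (t_*-t_0)/\de$ are required to cover $[t_0,t_*)$. Each iteration is initialized by the assumed bound on $\|\R(t_0)\|_{H^2}$ (obtained from the initial curvature bound $\RR_0$ together with \textbf{A1} and \textbf{A2} and standard constraint/Bianchi estimates on $\Si_{t_0}$), and the output is a constant $C$ depending only on $\De_0$, $\RR_0$, $t_*$, and the $H^2$ norm of the initial curvature.

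The main obstacle is the nonlinearity in the recursive step: the derivative energy estimate contains $\int\|\R\|_{L^\infty}^2$, and inserting Proposition \ref{prop:funny} produces a term quadratic in $M$, which a priori threatens doubly-exponential blowup under iteration. The resolution is to exploit the $\de^{-1}$ factor in Proposition \ref{prop:funny} together with the $\de^{1/2}$ gain from integrating $\|\R\|_{L^\infty}^2$ over an interval of length $\de/2$: by choosing $\de$ small (depending only on $\De_0,\RR_0,t_*$), one extracts an absorbable factor on the right-hand side, reducing the apparent quadratic recursion to an effectively linear Gronwall inequality $M(t)\le C_1 + C_2 \int_{t_0}^t M(s)\,ds$. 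Standard Gronwall then closes the bootstrap and delivers the desired uniform bound $\sup_{t\in[t_0,t_*)}\|\R(t)\|_{H^2}\le C$.
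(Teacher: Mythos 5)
Your first two paragraphs track the paper's own proof: feed Proposition \ref{prop:funny} into Propositions \ref{thm:deriv-energy-estim} and \ref{thm:deriv-energy-estim2}, obtain a time-shifted recursive inequality for the $H^2$ norm of $\R$, and iterate over the finitely many steps of size $\de/2$ needed to cover $[t_0,t_*)$. The genuine gap is in your closing paragraph. The $\de$-numerology goes the wrong way: squaring the bound of Proposition \ref{prop:funny} costs $\de^{-2}$, while integrating $\|\R(t')\|_{L^\infty}^2$ over an interval of length $\de/2$ gains only a factor $\de$ (not $\de^{1/2}$, which would arise from Cauchy--Schwarz applied to the unsquared integral), so the quadratic term comes with a net prefactor $\de^{-1}$, which is large. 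Shrinking $\de$ cannot help, both because $\de$ is capped by the threshold for which Proposition \ref{prop:funny} holds and because a smaller $\de$ only worsens $\de^{-1}$. Hence there is no ``absorbable small factor'', and the inequality you actually obtain is genuinely quadratic in $M$ with a large constant; it cannot be recast as the linear Gronwall inequality $M(t)\le C_1+C_2\int_{t_0}^t M(s)\,ds$ without an a priori bound on $M$ that you have not set up. As written, the last step of your argument fails.

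The fix is that no absorption or Gronwall step is needed at all, and it is exactly the finite iteration you already describe. Because Proposition \ref{prop:funny} controls $\|\R(t)\|_{L^\infty}$ only by $H^2$ norms of $\R$ at times $t'\le t-\de/2$, the substitution into the derivative energy estimates produces a strictly retarded recursion, schematically $\|\R(t)\|_{H^2}\les \de^{-1}\sup_{t'\in[t-\de,\,t-\de/2]}\|\R(t')\|_{H^2}^2$, whose right-hand side involves only times already handled at the previous step. One simply marches forward in steps of $\de/2$: each step yields a finite (possibly much larger) bound in terms of the previous one, and since the number of steps is $\les (t_*-t_0)/\de$ and therefore fixed by $\De_0$, $\RR_0$, $t_*$, the final constant --- even if it grows doubly exponentially in the number of steps --- depends only on $\De_0$, $t_*$, $\RR_0$ and $\|\R(t_0)\|_{H^2}$, which is all the theorem claims. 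This is precisely how the paper concludes (``iterating the estimate as many times as needed, in steps of size $\de/2$''); the ``doubly-exponential blowup'' you worry about is harmless, and the remedy you propose for it is the only incorrect part of the proposal.
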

\section{Proof of  Main Theorem \ref{thm:main}}
Theorem \ref{mainthm-energy} established above provides
us with  global uniform bounds  for the curvature tensor $\R$
and $L^2$ bounds for  its first two covariant derivatives.  Using elliptic estimates  we can also derive $L^2$  bounds for the first three derivatives 
of the second fundamental form, see theorem \ref{thm-last}. 
To finish the  proof of the Main Theorem we only need to  apply  the following local existence result
\begin{proposition}\label{prop:loc}
Let $(\Si_*,g,k)$ be initial data for the Einstein vacuum equations satisfying  the constraint equations.
We assume that $\Si$ is compact and has constant   mean curvature $\tau=g^{ij} k_{ij}={\text{const}}<0$.  
Let $R$ denote the Ricci  curvature tensor of $g$. Then there exists a smooth 
future Cauchy development of $(\Si,g,k)$ containing the region $\cup_{t\in [\tau,\tau+\rho]}\Si_t$,
where each $\Si_t$ is a constant mean curvature hypersurface (with mean curvature equal to $t$)
and $\Si_\tau=\Si_*$. The  constant $\rho$  here depends only on the diameter and radius of injectivity 
of $\Si$, the strictly negative   constant  $\tau$ ($\tau<0$)  and
 the  following  constant,
\bea\label{eq:sigma}
\RR_*&=&\|R\|_{L^2(\Si)} + \|\nab R\|_{L^2(\Si)}+ \|\nab^2 R\|_{L^2(\Si)}  \\
&+& \|k\|_{L^4(\Si)} +\|\nab k\|_{L^2(\Si)}+ \|\nab^2 k\|_{L^2(\Si)}\nn
+ \|\nab^3 k\|_{L^2(\Si)}.
\eea
\end{proposition}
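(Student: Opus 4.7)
My plan is to prove this as a standard local existence result for the reduced Einstein system in CMC gauge with zero shift, combining elliptic estimates for the lapse equation with hyperbolic energy estimates for the evolution equations, organized by a Picard-type iteration. I would first set up the coupled system: choosing the time function so that $\tr k = t$ and the shift vanishes, the metric takes the form $\g = -n^2 dt^2 + g_{ij} dx^i dx^j$, the evolution equations are \eqref{eq:prt-g}, \eqref{prt-k}, and, as derived in \eqref{eq:lapse}, the CMC condition forces the lapse to solve the elliptic equation $\lap n - |k|^2 n = -1$ on each slice. The constraints \eqref{eq:constr1}, \eqref{eq:constr2} are assumed initially and will be propagated by the evolution by a standard Bianchi argument.

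Next I would establish elliptic estimates for the lapse. Since $|k|^2 \geq \frac 13 (\tr k)^2 = \frac 13 \tau^2 > 0$ everywhere, the operator $\lap - |k|^2$ is negative and coercive on the compact manifold $\Si$, so for any $g,k$ sufficiently close to the initial data one obtains bounds of the form
\[
\|n-n_0\|_{H^{s+2}(\Si)} \les C\bigl(\|k\|_{H^s(\Si)}, \|g - g_0\|_{H^{s+1}(\Si)}, |\tau|^{-1}\bigr),
\]
for $s \le 3$, with the implicit constant depending on $\RR_*$, the diameter and the injectivity radius. In particular the lapse $n$ gains two derivatives over $k$; combined with \eqref{eq:max-n} the lapse remains uniformly bounded above and below. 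This is what makes the whole reduced system work: $n$ is always at a higher regularity level than $k$, so the right-hand sides of the evolution equations for $(g,k)$ sit at the correct level.

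Then I would run a Picard iteration. Given an approximation $(g^{(j)}, k^{(j)})$, solve the elliptic equation for $n^{(j)}$ on each slice, substitute into \eqref{eq:prt-g}, \eqref{prt-k} to produce $(g^{(j+1)}, k^{(j+1)})$, and close the estimates in the norm
\[
\mathcal{X}_T = \sup_{t\in[\tau,\tau+T]} \bigl( \|k(t) - k_0\|_{H^2(\Si_t)} + \|g(t)-g_0\|_{H^3(\Si_t)} + \|\nab R(t)\|_{L^2}+\|\nab^2 R(t)\|_{L^2}\bigr),
\]
using that Ricci of $g$ involves two spatial derivatives of $g$ so that \eqref{prt-k} naturally closes at this regularity. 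Standard commutator estimates, Moser-type nonlinear estimates in Sobolev spaces (valid because of the uniform metric equivalence from Proposition \ref{prop:compare-metrics}-type arguments), and the elliptic gain for $n$ give uniform bounds on a time interval $[\tau,\tau+\rho]$ with $\rho$ depending only on the data $\RR_*$, $\tau$, the diameter and the injectivity radius. Contraction in a weaker norm (say $\mathcal{X}_T$ with indices lowered by one) yields convergence to a solution. Propagation of constraints then follows from the reduced Bianchi system, which is a linear homogeneous symmetric hyperbolic system for $(\mathcal{C},\mathcal{C}_i) := (R - |k|^2 + (\tr k)^2, \nab^j k_{ij}-\nab_i \tr k)$ with zero initial data.

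\textbf{Main obstacle.} The central technical difficulty is not the iteration itself but making sure that the time of existence depends \emph{only} on the quantities listed: diameter, injectivity radius, $\tau$, and $\RR_*$, and not on higher Sobolev norms of the initial data. For this, the elliptic gain of two derivatives on $n$ is crucial, as is the coercivity $|k|^2 \geq \tau^2/3$ which is why the result requires $\tau < 0$ strictly. A secondary subtlety is ensuring that all nonlinear products appearing in the commuted evolution equations for $\nab^{\le 2}k$ and $\nab^{\le 2}R$ can be controlled in $L^2$ by $\RR_*$ alone, via the Sobolev embedding $H^1 \hookrightarrow L^6$ on three-manifolds with controlled geometry (which is why the $L^4$ bound on $k$ suffices in \eqref{eq:sigma}); here the control of the local coordinate charts on $\Si$ provided by the bounded diameter and injectivity radius replaces the assumption \textbf{A1} of the main theorem.
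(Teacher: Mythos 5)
Your overall architecture (CMC gauge, elliptic lapse, iteration, constraint propagation) is reasonable in outline, but the central analytic step does not close as claimed, and this is precisely the difficulty the paper sidesteps by simply invoking the local existence theorem 10.2.1 of \cite{C-K} ("a slight modification" thereof) rather than proving anything directly. The gap is in the Picard iteration: in \eqref{prt-k} the term $nR_{ij}$ involves the Ricci tensor of $g$, which in arbitrary (e.g.\ transported) spatial coordinates is \emph{not} an elliptic operator acting on $g$ — its principal symbol degenerates along the spatial diffeomorphism gauge directions. Consequently the zero-shift system \eqref{eq:prt-g}, \eqref{prt-k}, with $n$ determined elliptically by \eqref{eq:lapse}, is not symmetric hyperbolic and admits no energy identity at fixed Sobolev regularity: the map $(g^{(j)},k^{(j)})\mapsto(g^{(j+1)},k^{(j+1)})$ you propose loses derivatives, since $k^{(j+1)}$ is only as regular as $\mathrm{Ric}(g^{(j)})$, i.e.\ two derivatives below $g^{(j)}$, while $g^{(j+1)}$ is only as regular as $k^{(j)}$. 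The elliptic gain for $n$ does not repair this (it concerns the lapse, not the gauge degeneracy of $R_{ij}$), and "standard commutator and Moser estimates" cannot produce uniform bounds in your norm $\mathcal{X}_T$ for an iteration that is not bounded on that space in the first place.

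To make your route work you must add an ingredient that restores hyperbolicity or avoids evolving $g$ by the raw ADM equations: either impose a spatial gauge, e.g.\ spatially harmonic coordinates as in the Andersson--Moncrief CMCSH formulation, in which $R_{ij}$ is replaced by an elliptic operator in $g$ modulo gauge source terms and the system becomes a genuine elliptic--hyperbolic system amenable to your iteration; or take the curvature as the dynamical variable, using the Bianchi equations with Bel-Robinson energy estimates coupled to elliptic estimates for $n$, $k$ and the slice geometry (this is the machinery behind the cited theorem in \cite{C-K}); or obtain rough existence in harmonic/wave coordinates (Hughes--Kato--Marsden) and then run a geometric continuation argument to convert the existence time into one depending only on the diameter, injectivity radius, $\tau$ and $\RR_*$. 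Your remarks on coercivity of $\lap-|k|^2$, on the uniform lower bound $|k|^2\ge\tau^2/3$, and on constraint propagation are correct, but without one of these additional mechanisms the core of the proof is missing.
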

\begin{proof}: The proof requires a slight modification of  the local exitence theorem 10.2.1 in \cite{C-K}.
\end{proof}
Theorem \ref{mainthm-energy} above, combined with the bounds on the second fundamental form stated in theorem \ref{thm-last}, proved
below in the Appendix, implies that for each hypersurface $\Si_t\subset\MM_*$ with 
$t_0\le t<t_*$ the  constant $\RR_*$, defined in \eqref{eq:sigma}, is uniformly bounded.  On the other
hand, Theorem \ref{thm:harmonic} together with $L^\infty$ bounds on curvature implied by 
Theorem \ref{mainthm-energy} guarantees a uniform bound for both the diameter and radius of 
injectivity of $\Si_t$ for $t_0\le t<t_*$. As a consequence, under assumptions {\bf A1}, {\bf A2},
as long as $t_*<0$ we can construct a smooth globally hyperbolic CMC development containing 
the region $\cup_{t\in [t_0,t_*]} \Si_t$.

\section{Estimates for $\A$.}

\begin{proposition} Let $\A$ be the tensor defined in \eqref{eq:transp-A}.
Then for all $0<\de<i_*^-(p,t)$,
\be{eq:estim-A-unif}
\|(t(p)-t) \A\|_{L^\infty(\NN^-(p,\de))}\le C(t_*,\De_0,\RR_0).
\end{equation}

\end{proposition}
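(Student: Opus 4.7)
The plan is to integrate the transport equation for $\A$ along each past null geodesic from $p$ and exploit the fact that, by Proposition \ref{prop:coeff}, $\trch$ is close to $2/s$ uniformly on $\NN^-(p,\de)$. Since the prescribed vertex data forces $s\A(p) = \J_0$ with $|\J_0(p)| \le 1$, the natural $1/s$ blow-up of $\A$ near the vertex is exactly matched by the transport equation, and the remaining drift is controlled.

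First I would compute $L|\A|^2$ along a null generator $\ga_\omega(s)$. Using the transport equation $\D_L \A + \frac 12 \trch\, \A = 0$ and the bound $|\D h|\les \De_0$ from \eqref{bound.Dh}, one finds
\[
L|\A|^2 = (\D_L h^{IJ})\A_I \A_J + 2 h^{IJ}\A_I (\D_L \A)_J = -\trch\, |\A|^2 + \OO(\De_0)\, |\A|^2.
\]
Along $\ga_\omega$, $L=d/ds$, so dividing by $|\A|^2$ and setting $G(s) = s^2 |\A|^2(\ga_\omega(s))$, we obtain
\[
\frac{d}{ds}\log G(s) = -\!\left(\trch - \frac{2}{s}\right) + \OO(\De_0).
\]

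Next I would feed in the sup-norm bound $\sup_{S_t}|\trch - 2/s(t)| \le C(t_*,\De_0,\RR_0)$ provided by \eqref{eq:trch} in Proposition \ref{prop:coeff}, together with the vertex normalization $\lim_{s\to 0^+} G(s) = |\J_0(p)|^2 \le 1$, which follows directly from the initial condition $s\A(p) = \J_0$. Integrating the ODE from $0$ to any $s \in (0,\de]$ with $\de \le i_*^-(p,t)$, both terms on the right contribute at most $C\de$, yielding
\[
\log G(s) \le \log G(0^+) + (C+\De_0)\,\de \;\les\; 1,
\]
hence $s|\A| \le C(t_*,\De_0,\RR_0)$ pointwise on $\NN^-(p,\de)$. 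Finally, \eqref{eq:dsdt} gives $s \sim t(p)-t$ with a constant depending only on $\De_0$, which converts the bound on $s|\A|$ into \eqref{eq:estim-A-unif}.

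The main obstacle is the behavior at the vertex: the coefficient $\trch$ in the transport equation is singular at $p$ and $\A$ itself blows up, so the identity for $\log G$ must be interpreted as a limit $s \to 0^+$. This is the reason one must use the uniform sup-norm form of Proposition \ref{prop:coeff} (rather than an $L^2_\omega$ bound on $\nabb \trch$): the $(\trch - 2/s)$ discrepancy has to be integrable in $s$ all the way down to $s=0$, which the sup bound of order one readily supplies. Once this is handled, the rest is a straightforward Grönwall-type integration along the generators of $\NN^-(p,\de)$.
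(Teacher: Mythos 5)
Your argument is correct, and it takes a genuinely more direct route than the paper. The paper first reduces to the case of a vectorfield (handling higher rank by induction), sets $\B=s\A$, and then exploits the fact that $\D_L$ is compatible with $\g$ but not with $h$: it derives an \emph{exact} transport equation for the Lorentzian square $\langle\B,\B\rangle=\g^{\mu\nu}\B_\mu\B_\nu$, integrates it using the bound on $\trch-\frac 2s$ from Proposition \ref{prop:coeff}, and then separately propagates the time component $B_0=\langle\B,\T\rangle$, whose equation picks up $\langle\Bund,\D_L\T\rangle$ error terms bounded by $\De_0|\Bund|$; the two pieces are then recombined via $|\Bund|\les |B_0|+1$. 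You instead differentiate the positive definite $h$-norm directly and absorb the metric incompatibility through $|\D h|\les \De_0$ (estimate \eqref{bound.Dh}), which collapses the whole argument into a single Gr\"onwall inequality for $G(s)=s^2|\A|^2$ and works for tensors of arbitrary rank at once, with no induction and no component splitting. The two proofs rest on the same analytic inputs: the sup bound $\sup_{S_t}|\trch-\frac{2}{s}|\le C$ of \eqref{eq:trch} (which, as you note, is what makes the singular coefficient integrable down to the vertex, with constant depending on the flux and hence on $t_*,\De_0,\RR_0$), the vertex normalization $s\A(p)=\J_0$, and the comparability of $s$ and $t(p)-t$ through \eqref{eq:null-lapse-ineq} and \eqref{eq:dsdt}. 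Two small points worth making explicit in your write-up: the $\OO(\De_0)$ term uses $|\D_L h|\les |L|_h\,|\D h|$ together with $|L|_h\les \varphi^{-1}\les 1$, so the bound $\f12\le\varphi\le 2$ from \eqref{eq:null-lapse-ineq} is needed already at that step, not only at the end; and since $\log G$ may be undefined where $G$ vanishes, the integration should be phrased as Gr\"onwall applied to $\frac{d}{ds}G\le\big(|\trch-\frac 2s|+C\De_0\big)G$, which yields the same upper bound.
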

\begin{proof}.\quad 
Recall, see definition \ref{def:notation}, the convention $b\les 1$ for an inequality of 
the form $b\le C(t_*,\De_0,\RR_0)$. 
We  claim that it suffices to prove the proposittion
for the case when $\A_\mu$ is a vectorfield. The general case can be 
derived by a simple induction argument. Recall that we have,
$$
\D_L \A + \frac 12 \trch \A=0.
$$
with $ (s\A)$  prescribed to be $\J_0$ at the vertex $p$. 
In  view 
of the identity 
$
\frac {dt}{ds}=-(n\varphi)^{-1}
$
 as well as  estimate \eqref{eq:null-lapse-ineq}, it suffices to  prove the inequality 
$$
\|s\A\|_{L^\infty(\NN^-(p,\de))}\les 1.
$$
Letting  $\B=s\A$  we have
\bea
\D_L \B=-1/2 (\frac{2}{s}-\trch)\B,\qquad \B|_{s=0}=\J_0
\eea
Recall that,
\beaa
|\B|^2=B_0^2+|\underline{B}|^2=2|B_0|^2+<\B,\B>
\eeaa
where $B_0=<\B,\T>$, $\underline{B}$ is the projection of $\B$ on
the foliation $\Si_t$ and $<\B,\B>=\g^{\mu\nu} \B_\mu\B_\nu$.
 We shall first estimate $<\B,\B>$
by observing that,
\beaa
\frac{d}{ds}<\B,\B>=- (\frac{2}{s}-\trch)<\B,\B>,
\eeaa
which in turn implies 
\beaa
\frac{d}{dt}<\B,\B>= n\varphi (\frac{2}{s}-\trch)<\B,\B>,
\eeaa
Therefore, since  $|<\B,\B>(0)|\les |\J(0)|\les 1$,
\beaa
 |<\B(t),\B(t)>|\le \,  |<\B(0),\B(0)>|
\exp\big(\int_t^{t(p)}n\varphi \,|\trch-\frac{2}{s'}| dt'\big)\les 1,
\eeaa
where the last inequality follows from \eqref{eq:null-lapse-ineq} and \eqref{eq:trch}.
Therefore, for all $t(p)-\de< t\le t(p)$,
\be{eq:estim-Delta}
|<\B(t),\B(t)>|\les 1.
\end{equation}
We shall next derive a transport equation for
$B_0$ using the fact that $\B=-B_0 \T  +\Bund$
\beaa
\frac{d}{ds} B_0&=&<\D_\L \B, \T>+<\B, \D_\L\T>\\
&=& 1/2 (\frac{2}{s}-\trch)B_0+<\Bund , \D_\L \T>
\eeaa
Observe that,
\beaa
<\Bund , \D_\L \T>=-\f12 \varphi^{-1}\big(<\Bund, \D_\T\T+\D_N \T>\big)
\eeaa
Therefore, recalling our condition \eqref{bound.DT},
\beaa
|<\Bund, \D_T\T+\D_N \T>|\les \De_0  |\Bund|
\eeaa
Therefore,
\beaa
\frac{d}{ds}|B_0|\les |B_0|+\De_0|\Bund|
\eeaa
On the other hand, from \eqref{eq:estim-Delta},
$|-B_0^2+|\Bund|^2|=|<\Bund,\Bund>|\les 1$ from which,
\beaa
|\Bund|\les \sqrt{1+B_0^2}\les |B_0|+1
\eeaa
Therefore,
\beaa
\frac{d}{dt}|B_0|\les |B_0|+\De_0(|B_0|+1)
\eeaa
from which we deduce  the estimate
\beaa
|B_0|\les 1
\eeaa
Thus, together with \eqref{eq:estim-Delta},
we derive,
\be{eq:estim-B0}
|\B|\le |B_0|+|\Bund|\les 1
\end{equation}
as desired.
\end{proof}
\subsection{Estimates for $\nabb \A$.}

\begin{proposition}
Let $\A$ be the tensor defined in  \eqref{eq:transp-A}.
Then,
\be{eq:estim-nabA}
\|\sup_{t(p)-\de\le t\le t(p)} (t(p)-t)^{\frac 32} |\nabb\A(t)|\, \|_{L^2_\omega} \les 1,
\end{equation}
\be{eq:estim-nabA-L2}
\|\nabb\A\, \|_{L^2(\NN^-(p,\de))} \les 1,
\end{equation}
\end{proposition}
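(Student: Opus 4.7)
The plan is to commute the transport equation $\D_L \A + \tfrac12 \trch \A = 0$ with the angular derivative $\nabb$, rescale by an appropriate power of $s$ to cancel the singular leading coefficient $\trch \sim 2/s$, and integrate along the null generators.  Applying Lemma \ref{lem:commute} with $F = -\tfrac12 \trch \A$ yields
$$\nabb_L(\nabb_a \A) + \trch\, \nabb_a \A = -\chih_{ab}\, \nabb_b \A - \tfrac12 (\nabb_a \trch)\, \A - \tfrac12 \trch\, (\ze_a + \etab_a)\, \A + \R \cdot \A.$$
Setting $\B_a := s^2 \nabb_a \A$ and using $\D_L s = 1$ along null generators, the singular part of $\trch$ is absorbed by the rescaling, giving
$$\nabb_L \B_a = -\bigl(\trch - \tfrac{2}{s}\bigr) \B_a - \chih_{ab} \B_b + s^2 \,\mathrm{Src}_a,$$
with $|\mathrm{Src}| \les |\nabb\trch||\A| + |\trch||\ze+\etab||\A| + |\R||\A|$.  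Invoking \eqref{eq:estim-A-unif} ($|\A| \les 1/s$) together with $|\trch| \les 1/s$, this simplifies to $s^2 |\mathrm{Src}| \les s|\nabb\trch| + |\ze+\etab| + s|\R|$.  The initial condition $\B(0, \omega) = 0$ follows because the limit $s\A \to \J_0$ is $\omega$-independent to leading order.

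For \eqref{eq:estim-nabA}, I apply Gronwall along each null generator $\ga_\omega$.  By \eqref{eq:trch}, $|\trch - 2/s| \les 1$ pointwise, while \eqref{eq:mu} gives $\|\chih(\cdot, \omega)\|_{L^2_s} \les 1$ uniformly in $\omega$; Cauchy--Schwarz in $s$ then bounds the Gronwall exponent uniformly, so that
$$|\B(s, \omega)| \les \int_0^s s'^2 |\mathrm{Src}(s', \omega)|\,ds'.$$
Taking the supremum in $s$ followed by the $L^2_\omega$ norm, the three source contributions are controlled separately:  the $\int_0^s s' |\nabb\trch|\, ds'$ term is dominated pointwise by $G(\omega) := \sup_s s|\nabb\trch(s, \omega)|$, with $\|G\|_{L^2_\omega} \les 1$ by \eqref{eq:trch};  the $\int_0^s |\ze+\etab|\, ds'$ term is bounded by $\de^{1/2} \|\ze + \etab\|_{L^\infty_\omega L^2_s} \les 1$ from \eqref{eq:mu};  and $\int_0^s s' |\R|\, ds'$ is controlled in $L^2_\omega$ by $\de^{1/2} \RR(p, \de)^{1/2} \les 1$ via proposition \ref{thm:geom-estim}.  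Together these yield \eqref{eq:estim-nabA}.

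For \eqref{eq:estim-nabA-L2}, naively integrating the pointwise Gronwall bound in $s$ gives a logarithmic divergence at the vertex, so I instead invoke Hardy's inequality:  $\int_0^\de s^{-2} \bigl(\int_0^s s'^2 |\mathrm{Src}|\, ds'\bigr)^2\, ds \les \int_0^\de s^4 |\mathrm{Src}|^2\, ds$.  Since $\|\nabb\A\|_{L^2(\NN^-(p,\de))}^2$ is equivalent to $\int_0^\de \int_{{\Bbb S}^2} s^{-2} |\B|^2\, d\sigma\, ds$, the problem reduces to bounding $\int_0^\de \int s^4 |\mathrm{Src}|^2\, d\sigma\, ds$, which splits as $s^2|\nabb\trch|^2 + |\ze+\etab|^2 + s^2|\R|^2$; these three pieces are controlled respectively by $\de\|G\|_{L^2_\omega}^2$, by $\int d\sigma \int |\ze+\etab|^2 ds \les \|\ze+\etab\|_{L^\infty_\omega L^2_s}^2$, and by $\RR(p,\de)^2$.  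The main obstacle is the careful coordination of heterogeneous norms for the Ricci coefficients ($\nabb\trch$ only $L^2_\omega$ with an $s$-weight, $\chih, \ze, \etab$ only $L^\infty_\omega L^2_s$) and the curvature (only in flux sense along $\NN^-(p,\de)$); the precise power $s^2$ in the rescaling $\B = s^2\nabb\A$ is chosen exactly so that all these weights align and no divergence appears at the vertex $s = 0$.
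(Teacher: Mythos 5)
Your argument follows essentially the same route as the paper's proof: commute the transport equation for $\A$ with $\nabb$ via Lemma \ref{lem:commute}, rescale by $s^2$, integrate along the null generators using the Ricci coefficient estimates of Proposition \ref{prop:coeff} together with $|\A|\les 1/s$, and then pass to the $L^2(\NN^-(p,\de))$ bound; your use of Hardy's inequality in the last step is an acceptable substitute for the maximal function argument used in the paper. However, one step is not justified as written. You treat the curvature source as a generic $\R\cdot\A$ and claim that $\int\int s^2|\R|^2$ along the cone is controlled by $\RR(p,\de)$ ``via Proposition \ref{thm:geom-estim}''. As stated this is false: the reduced flux $\RR(p,\de)$ of \eqref{eq:rondR} controls only the components $\a,\b,\rho,\si,\bb$, not $\aa$, and the full $|\R|$ restricted to $\NN^-(p,\de)$ is not square integrable under the hypotheses. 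The step is saved only by the specific structure of the commutator term ${\R_{\mu}^{\,\,\la}}_{L a}\A_\la$: the presence of the $L$ index means it is a linear combination of the flux-controlled components alone, with no $\aa$ contribution. This observation, which the paper makes explicitly, must appear in your proof; without it the estimate of the curvature contribution has a genuine gap.

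A second, more minor point: your pointwise Gronwall on $|\B|$ glosses over the fact that $|\B|$ is the positive definite $h$-norm while the free index $\mu$ of $\B_{a\mu}$ is a spacetime index. Contracting the transport equation with $\B$ controls only the Lorentzian quantity $\B_{a\mu}\B_a^{\,\,\mu}$; to control $|\B|^2=2|\B_{a0}|^2+\B_{a\mu}\B_a^{\,\,\mu}$ one must also track the component $\B_{a0}=\langle \B_a,\T\rangle$, whose equation picks up extra terms of the schematic form $\varphi^{-1}(n^{-1}\nabb n+k)\cdot\B$ coming from $\D_L\T$. These are bounded by $\De_0|\B|$ and therefore harmless for Gronwall, but they belong in the exponent and should be recorded; the paper does this by deriving a separate equation for $\U_{a0}$ and combining it with the estimate for $\U_{a\mu}\U_a^{\,\,\mu}$. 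With these two points supplied, your proof is correct and coincides in substance with the paper's.
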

\begin{proof}
In what follows we recall that points on $\NN^-(p,\de)$ are parametrized by the coordinates 
 $(t,\omega)$ with $\omega\in {\Bbb S}^2$. According to to Proposition \ref{prop:coeff}
 the volume forms $dA_{\NN^-(p,\de)}$ on $\NN^-(p,\de)$
and $dS_t$ on $S_t$ are respectively 
equivalent to the expressions $(t(p)-t)^2 \, dt\, d\sigma_{{\Bbb S}^2}$ and  
$(t(p)-t)^2 \, d\sigma_{{\Bbb S}^2}$ with $d\sigma_{{\Bbb S}^2}$ denoting the standard volume form
on ${\Bbb S}^2$. Similarly equivalent are the quantities $(t(p)-t)$ and $s(t)$.
Notation $L^2_\omega$ above refers to the $L^2$ norm with respect to the measure
$d\sigma_{{\Bbb S}^2}$. 

We begin by applying the results of lemma \ref{lem:commute}
to the equation $
\D_\L \A + \frac 12 \trch \A=0.
$ and derive,
\beaa
\nabb_\L (\nabb_a A_\mu) +\chi_{ab} \nabb_b A_\mu= -\f12 \nabb_a (\trch \A_\mu) -\f12(\ze_a+\etab_a)
\trch \A_\mu+
 {\R_{\mu}^{\,\,\la}}_{L a} A_\la
\eeaa
Therefore,
\beaa
\nabb_\L (\nabb_a \A_\mu)+\trch (\nabb_a \A_\mu)=-\chih_{ab}\nabb_b \A_\mu
-\f12 (\nabb_a \trch) \A_\mu  -\f12(\ze_a+\etab_a)
\trch \A_\mu+ 
 {\R_{\mu\,\,L a}^{\,\,\la}} A_\la
\eeaa
which we rewrite in the form
\bea
\nabb_\L \U_{a\mu}+\trch\U_{a\mu}& =&-\chih_{ab}\U_{b\mu}+\F_{a\mu},\qquad \U|_{s=0}=0\\
\F_{a\mu}&=&
-\f12 (\nabb_a \trch) \A_\mu  -\f12(\ze_a+\etab_a)
\trch \A_\mu+ 
 {\R_{\mu\,\,\L a}^{\,\,\la}} A_\la
\eea
with $\U_{a\mu}=\nabb_a \A_\mu$. Observe that
$$
|\U|^2=2 |\U_{a0}|^2+\U_{a\mu} \U_{a}^{\,\,\mu}
$$
Then,
$$
\nabb_\L \U_{a0}+\trch\U_{a0} =-\chih_{ab}\U_{b0}+\F_{a0}+\frac 12 \phi^{-1}(n^{-1}\nabb_j n + k_{N j}) \U_{a j}
$$As a consequence,
\beaa
\nabb_\L \left (s^2\U_{a0}\right)=s^2\left ((\frac{2}{s}-\trch-\chih)\c\U_{a0}+\F_{a0}+
\frac 12 \phi^{-1}(n^{-1}\nabb_j n + k_{N j}) \U_{a j}\right) := s^2 \G
\eeaa
Using that $\frac {ds}{dt}=-n\varphi$ and 
$$
\frac d{ds} \left (s^4 |\U_{a0}|^2\right) = s^4 \g(\G,\U_{a0})
$$
together with boundedness of $n$ and $\varphi$,
we estimate in the range  $t(p)-\de\le t\le t(p)$,
\begin{align*}
\|\sup_{t} s(t)^3  |\U_{a0}|^2\|_{L^1_\omega} &\les 
\|\sup_t s(t)^{-1} \int_{t}^{t(p)} s(\tau)^4 \g( \G,\U_{a0})\, d\tau  \|_{L^1_\omega}\\&
\les \|\sup_t s(t)^{-1} \int_t^{t(p)} s(\tau)^{\frac 52} |\G|d\tau \|_{L^2_\omega} \| \sup_{\tau} s(\tau)^3  |\U_{a0} |^2 \|_{L^1_\omega}^{\frac 12} \\ &\les \epsilon^{-1}
\|\sup_ts(t)^{-1}\int_t^{t(p)} s(\tau)^{\frac 52} |\G|d\tau \|^2_{L^2_\omega} + 
\epsilon
\| \sup_\tau s(\tau)^3  |\U_{a0} |^2 \|_{L^1_\omega}
\end{align*}
To control $\|\sup_t s(t)^{-1} \int_t^{t(p)} s(\tau)^{\frac 52} |\G|d\tau \|_{L^2_\omega}$ we first estimate the integral,
$$J:=\|\sup_t s(t)^{-1}
\int_t^{t(p)} s(\tau)^{\frac 52} |(\frac{2}{s(\tau)}-\trch-\chih)\c\U_{a0}|d\tau \|_{L^2_\omega},$$
as follows,
\begin{align*}
J& \les 
\|\sup_t \int_t^{t(p)} |(\frac{2}{s(\tau)}-\trch-\chih)|d\tau \|_{L^\infty_\omega} \| \sup_\tau s(\tau)^{\frac 32}  |\U_{a0}| \|_{L^2_\omega}
\\ &\les  \|\sup_t s(t)^{\frac 12}
\int_t^{t(p)} |(\frac{2}{s(\tau)}-\trch-\chih)|^2 d\tau \|^{\frac 12}_{L^\infty_\omega} 
\| \sup_\tau s(\tau)^{\frac 32}  |\U_{a0} | \|_{L^2_\omega}\\ &\les \de^{\frac 12} 
\| \sup_\tau s(\tau)^{\frac 32}  |\U_{a0}| \|_{L^2_\omega},
\end{align*}
where the last inequality follows from \eqref{eq:trch} and \eqref{eq:mu}.
On the other hand, in view of \eqref{eq:estim-A-unif}, \eqref{eq:trch},\eqref{eq:mu}
\begin{align*}
\|\sup_t s(t)^{-1}
\int_t^{t(p)} s(\tau)^{\frac 52} |\nabb\trch| |\A| d\tau \|_{L^2_\omega}& \les 
\de^{\frac 12}\|\sup_\tau (t(p)-\tau)  |\nabb\trch| \|_{L^2_\omega}\les \de^{\frac12},\\
\|\sup_t s(t)^{-1}
\int_t^{t(p)} s(\tau)^{\frac 52} |\ze+\etab| |\trch| |\A| d\tau \|_{L^2_\omega}& \les 
\|\sup_\tau (t(p)-\tau)  |\trch| \|_{L^\infty} \int_t^{t(p)} |\ze+\etab|^2d\tau d\omega\les 1.
\end{align*}
Moreover, using Proposition \ref{thm:geom-estim}, we have
\begin{align*}
\|\sup_t s(t)^{-1}\int_t^{(p)} s(\tau)^{\frac 52} |\R_{\mu\,\,\,L a}^{\,\,\la}| |\A| d\tau \|_{L^2_\omega}& \les 
\|\sup_t s(t)^{-\frac 12}
\int_t^{t(p)} (t(p)-\tau) |\R_{\mu\,\,\,L a}^{\,\,\la}| d\tau \|_{L^2_\omega}
\\ &\les \|\R_{\mu\,\,\,L a}^{\,\,\la}\|^{\frac 12}_{L^2(\NN^-(p,\de_*))} 
\les  {\mathcal{R}}_0^{\frac 12}.
\end{align*}
Note that it is the presence of an $L$ component in the Riemann curvature tensor 
$\R_{\mu\,\,\,L a}^{\,\,\la}$ which allows us to express it as a linear combination of 
the tangential terms $\a,\b,\rho,\sigma,\bb$ entering into the expression for the curvature flux.

Finally,
\begin{align*}
\|\sup_t s(t)^{-1}\int_t^{t(p)} s(\tau)^{\frac 52}  |\varphi^{-1}(n^{-1}\nabb_j n + k_{N j}) \U_{a j}| d\tau \|_{L^2_\omega}& \les 
\Delta_0 \de 
\| \sup_\tau s(\tau)^{\frac 32}  |\U_{a\cdot} | \|_{L^2_\omega}
\end{align*}
Therefore,
$$
\|\sup_t s(t)^3  |\U_{a0}|^2\|_{L^1_\omega} \les \epsilon^{-1} \left (1+s(t)^{2}+{\mathcal{R}}_0\right)+
(\epsilon+\epsilon^{-1} \Delta_0^2 s(t)^2) \| \sup_\tau s(t)^{3}  |\U_{a\cdot} |^2 \|_{L^1_\omega}
$$
Combining this with the similar estimate on $\U_{a\mu} \U_{a}^{\,\,\mu}$ we obtain 
$$
\| \sup_{t(p)-\de\le t\le t(p)} (t(p)-t)^3  |\U_{a0}|^2\|_{L^1_\omega} \les 1+{\mathcal{R}}_0,
$$
which gives \eqref{eq:estim-nabA}. The argument above also provides the inequality
$$
(t(p)-t)  \|\U\|_{L^2_\omega}\les (t(p)-t)^{\frac 12} + \frac 1{t(p)-t} \int_t^{t(p)} 
\left ((t(p)-\tau) \|\R_{\mu\,\,\,\L a}^{\,\,\la}\|_{L^2_\omega}+\|\ze+\etab\|_{L^2_\omega}\right)\,d\tau.
$$
Using the maximal function estimate we then obtain
$$
\|\U\|_{L^2(\NN^-(p,\de_t))}\les 1+ {\mathcal{R}}_0^{\frac 12}
$$
and hence \eqref{eq:estim-nabA-L2}.
\end{proof}
\section{Appendix}
Recall that the curvature tensor $\R$ can be decomposed
into its  electric and magnetic parts $E, H$ as follows,
\be{eq:el-magn}
E(X,Y)=<\R(X,\T)\T, Y>,\qquad H(X,Y)=<\dual\R(X,\T)\T, Y>
\end{equation}
with $\dual\R$ the Hodge dual of $\R$. One can easily check
that $E$ and $H$ are tangent, traceless 2-tensors,   to $\Si_t$ and 
that $|\R|^2=|E|^2+|H|^2$. We easily check the formulas relative
to an orthonormal frame $e_0=T, e_1, e_2, e_3$,
\bea
\R_{abc0}&=&-\in_{abs}H_{sc},\qquad \dual
\R_{abc0}=\in_{abs}E_{sc}\label{eq:R-coeff}\\
\R_{abcd}&=&\in_{abs}\in_{cdt}E_{st},\qquad 
\dual\R_{abcd}=-\in_{abs}\in_{cdt}H_{st}\nn
\eea

We recall below some of  the main 
formulas  involving $k, E$ and $H$.
\bea
E_{ij}-R_{ij}&=&\tr k \,k_{ij}-k_{i}^{\, s}k_{sj}\label{eq:E}\\
H_{ij}&=&\curl k_{ij}\label{eq:H}
\eea
where, for any given symmetric two tensor 
$l$ of $\Si_t$ one defines 
$$\curl l_{ij}=\in_{i}^{\,ab}\nab_a l_{bj}+ \in_{j}^{\,ab}\nab_a l_{ib}.$$
We also recall the constraint equation for $k$,
\be{eq:constr-div}
\nab^j k_{ij}-\nab_i\tr k=0
\end{equation}
In the particular case when $\tr k$ is constant
 equations \eqref{eq:H}
and \eqref{eq:constr-div} form an elliptic Hodge system on $\Si_t$,
\bea
\div k&=&0\qquad 
\curl k=H \label{eq:Hodge-k}
\eea

\subsection{Elliptic $L^2$- estimates for Hodge systems}
Here  we recall the  following lemma concerning rank-2 symmetric Hodge systems 
on a $3$ dimensional compact  Riemannian manifold $\Si$.
\begin{lemma} The following $L^2$ elliptic estimates hold
on a $3$ dimensional Riemannian manoflod $\Si$.
\label{le:elliptic-Sigma}

{\bf i.}\quad  Let $V$ be a symmetric tracelss $2-$ tensor 
on $\Si$ verifying,
\bea
\div V&=&\rho,\qquad 
\curl V=\si
\eea
Then,
\bea
\int_\Si\big(|\nab V|^2+3 R_{mn} V^{im} V_{i}^{\,n}-\f12 R
|V|^2\big)=\int_\Si(|\si|^2+\f12 |\rho|^2)
\eea
where $R_{ij}$  is the Ricci curvature of $\Si$ and $R$ its scalar
curvature.

{\bf ii.}\quad For a scalar $\phi$ we have,
\beaa
\int_\Si|\nab^2 \phi|^2+\int_\Si R^{ij}\nab_i\phi\nab_j \phi=
\int_\Si|\lap\phi|^2
\eeaa
\end{lemma}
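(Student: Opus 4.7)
The plan is to prove (ii) first by the classical Bochner identity and then adapt the same philosophy, with heavier bookkeeping, to the symmetric traceless tensor setting of (i).

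For part (ii) I would start from the pointwise Bochner identity
\[
\tfrac{1}{2}\Delta |\nabla \phi|^2 = |\nabla^2 \phi|^2 + \langle \nabla \phi, \nabla \Delta \phi\rangle + R^{ij}\nabla_i \phi\, \nabla_j \phi,
\]
which follows directly from the Ricci commutator $[\nabla^j,\nabla_i]\nabla_j \phi = R_i^{\,j}\nabla_j \phi$ after expanding $\nabla^j \nabla_j |\nabla \phi|^2/2$. Integrating over the closed manifold $\Sigma$ annihilates the left-hand side, and a single integration by parts converts $\int \langle \nabla \phi, \nabla \Delta \phi\rangle$ into $-\int |\Delta \phi|^2$, giving the stated identity.

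For part (i) the plan is to produce a Weitzenb\"ock identity for symmetric traceless $2$-tensors. First I would rewrite $|\curl V|^2 + \tfrac{1}{2}|\rho|^2$ using the definitions of $\curl$ and $\div$ and the $3$-dimensional contraction $\epsilon_i{}^{ab}\epsilon^i{}_{cd}=\delta^a_c\delta^b_d-\delta^a_d\delta^b_c$, so that every term becomes a quadratic in $\nabla V$ with no $\epsilon$'s remaining. This produces a pointwise expression of the schematic form $\nabla^a V^{bc}(\nabla_a V_{bc}-\nabla_b V_{ac}) + \nabla^a V_{ab}\,\nabla_c V^{cb}$, up to symmetrizations absorbed by the symmetry and tracelessness of $V$. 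Then, integrating by parts in the ``cross'' term $\int \nabla^a V^{bc}\nabla_b V_{ac}$, I would trade one derivative on $V$ for a second one, landing in $\int V^{bc}\nabla_a\nabla_b V^a{}_c$ plus a divergence. Commuting $[\nabla_a,\nabla_b]$ past $V$ produces the Riemann-tensor terms
\[
[\nabla_a,\nabla_b]V_{ij} = R^k{}_{iab}V_{kj}+R^k{}_{jab}V_{ik},
\]
and here the key simplification is the $3$-dimensional identity that the Riemann tensor is algebraically determined by the Ricci tensor and scalar curvature via
\[
R_{ijkl}=R_{ik}g_{jl}-R_{il}g_{jk}+R_{jl}g_{ik}-R_{jk}g_{il}-\tfrac{R}{2}(g_{ik}g_{jl}-g_{il}g_{jk}).
\]
Combined with $g^{ij}V_{ij}=0$ and the symmetry of $V$, the sum of the curvature contractions collapses exactly to $3R_{mn}V^{im}V_i{}^n - \tfrac{1}{2}R|V|^2$, giving the claimed identity.

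The main obstacle will be the index bookkeeping in part (i): making sure that (a) the $\epsilon$-contractions in $|\curl V|^2$ are unwound correctly, (b) the many a priori extraneous tensorial terms (traces of $V$, antisymmetric pieces, cross divergences) vanish by the symmetric-tracelessness of $V$, and (c) the curvature terms arising from commutators combine with exactly the right coefficients after invoking the $3$-dimensional Riemann/Ricci identity. Once these cancellations are carried out, no further analytic input is needed, only the absence of boundary from compactness of $\Sigma$.
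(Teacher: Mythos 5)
Your strategy is the right one, and it is essentially the only proof on record: the paper itself gives no argument for this lemma, deferring to Proposition 4.4.1 of Christodoulou--Klainerman, whose proof is precisely the Bochner/Weitzenb\"ock computation you outline. Part (ii) as you sketch it is complete. For part (i), your plan --- unwind the $\epsilon$-contractions, integrate by parts in the cross term, commute covariant derivatives, and use the three-dimensional expression of Riemann through Ricci together with the symmetry and tracelessness of $V$ --- does produce the curvature terms with exactly the stated coefficients $3R_{mn}V^{im}V_i{}^{n}-\frac12 R|V|^2$, so that part of your claim checks out.

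The one step you assert without computing --- that everything ``collapses exactly'' to the printed identity --- is where a careful execution disagrees with the statement: the coefficient of $|\rho|^2$ comes out as $\frac32$, not $\frac12$. Concretely, set $C_{ij}=\epsilon_i{}^{ab}\nabla_aV_{bj}$. Since the antisymmetric part of $C$ is $\frac12\epsilon_{ijk}\rho^k$, one has pointwise $|C|^2=|\sigma|^2+\frac12|\rho|^2$, with $\sigma$ the symmetrized curl; on the other hand $\int_\Sigma|C|^2=\int_\Sigma|\nabla V|^2-\int_\Sigma\nabla^aV^{bj}\nabla_bV_{aj}$, and the cross term, after integration by parts and commutation (using $\div V=\rho$ and the $3$-dimensional Riemann--Ricci identity), equals $\int_\Sigma|\rho|^2-3\int_\Sigma R_{mn}V^{im}V_i{}^{n}+\frac12\int_\Sigma R|V|^2$. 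Combining gives $\int_\Sigma\big(|\nabla V|^2+3R_{mn}V^{im}V_i{}^{n}-\frac12 R|V|^2\big)=\int_\Sigma\big(|\sigma|^2+\frac32|\rho|^2\big)$. That the printed $\frac12$ cannot be right under any convention for curl is already visible on the flat torus with $V_{11}=f(x^1)$, $V_{22}=V_{33}=-\frac12 f(x^1)$: there $|\nabla V|^2=\frac32 (f')^2$, $|\rho|^2=(f')^2$, and the symmetrized curl vanishes. So when you carry out the bookkeeping, expect to land on the identity with $\frac32|\rho|^2$ (as in the cited source); the discrepancy is harmless for the paper's application, where the lemma is used for the second fundamental form with $\div k=0$.
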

\begin{proof}. \quad See Proposition 4.4.1 in \cite{C-K}.
\end{proof}
\subsection{Apriori estimates for $k$}We now  apply lemma
\ref{le:elliptic-Sigma} to the Hodge system \eqref{eq:Hodge-k} for $k$ on a
fixed  hypersurface $\Si=\Si_t$, $t<0$,  
\beaa
\int_\Si\big(|\nab k|^2+3(k^2)^{mn}(E_{mn}+(k^2)_{mn})-\f12 |k|^2 |k|^2\big)=
\int_\Si|H|^2
\eeaa
Interpreting $k$ as $3\times 3$ symetric matrices we can write 
\beaa
3(k^2)^{mn}(k^2)_{mn}-\f12 |k|^2 |k|^2=3 \tr(k^4)-\f12 (\tr k^2)^2
\eeaa
Observe that we have the pointwise inequality\footnote{Indeed diagonalizing $k$ it suffices to prove the
inequality for arbitrary  real numbers $a,b,c$,
$3(a^2+b^2+c^2)^2\ge a^4+b^4+c^4$.}, for an arbitrary symmetric matrix $k$,
$ \tr (k^4)\ge \frac{1}{3}|k|^4$. Therefore,
\beaa
\int_\Si|\nab k|^2+\f12 |k|^4&\le& \int_\Si |H|^2+\int_\Si  |E|\, |k|^2
\le \int_\Si |H|^2 +|E|^2+\int_\Si\frac{1}{4}|k|^4 
\eeaa
This  proves the following:
\begin{proposition}
\label{prop:L2-estim-k}On  any leaf $\Si$ of a constant mean curvature foliation $\Si_t$
the second fundamental form $k$ verifies the estimate,
\be{eq:L2-estim-k}
\int_\Si|\nab k|^2+\frac{1}{4} |k|^4\le \int_\Si |H|^2 +|E|^2=\int_\Si |\R|^2.
\end{equation}
\end{proposition}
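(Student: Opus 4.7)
The plan is to apply the rank-2 Hodge $L^2$-estimate of Lemma~\ref{le:elliptic-Sigma}(i) directly to the symmetric traceless tensor $k$ (traceless after subtracting the constant $\tr k$-piece, which is harmless in the CMC setting), using the Hodge system \eqref{eq:Hodge-k}
\[
\div k = 0, \qquad \curl k = H.
\]
This immediately yields
\[
\int_\Si \bigl(|\nab k|^2 + 3 R_{mn}\, k^{im} k_i{}^n - \tfrac12 R\,|k|^2\bigr) = \int_\Si |H|^2.
\]
The strategy is then to re-express the intrinsic Ricci and scalar curvatures of $\Si$ in terms of $E$, $H$, and $k$ via the Gauss equation \eqref{eq:E} and the constraint \eqref{eq:constr1}, and to show that the resulting polynomial expression in $k$ is bounded below by a positive multiple of $|k|^4$ up to a controllable cross term with $E$.

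Concretely, substituting $R_{mn} = E_{mn} - \tr k\, k_{mn} + (k^2)_{mn}$ from \eqref{eq:E} and $R = |k|^2 - (\tr k)^2$ from \eqref{eq:constr1} turns the curvature correction into
\[
3 E^{mn}(k^2)_{mn} + 3\,\tr(k^4) - 3\,\tr k\,\tr(k^3) - \tfrac12 |k|^4 + \tfrac12(\tr k)^2|k|^2.
\]
The central algebraic observation is the pointwise inequality $\tr(k^4) \ge \tfrac13 |k|^4$, which follows by diagonalizing the symmetric matrix $k$ and the elementary fact $3(a^2+b^2+c^2)^2 \ge (a^4+b^4+c^4)$ applied to its eigenvalues. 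This converts $3\,\tr(k^4) - \tfrac12 |k|^4$ into a positive quantity $\ge \tfrac12 |k|^4$. The mixed $E$--$k$ term is then split by Cauchy--Schwarz,
\[
3 |E|\,|k|^2 \le 3|E|^2 + \tfrac{3}{4}|k|^4 \quad\text{(or a suitable Young split)},
\]
with the $|k|^4$ portion absorbed into the $\tfrac12|k|^4$ coming from $\tr(k^4)$, leaving at least $\tfrac14|k|^4$ on the positive side, which is exactly the coefficient appearing in the stated inequality. The remaining cubic and quadratic-in-$\tr k$ corrections are handled using the CMC hypothesis $\tr k = t$ (a constant on $\Si_t$), so that $\tr k\,\tr(k^3)$ and $(\tr k)^2 |k|^2$ are controlled by a small multiple of $|k|^4$ and can likewise be absorbed, provided the coefficients are arranged carefully.

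The main obstacle, and the only place requiring care, is this bookkeeping: ensuring that after substituting the Gauss--Codazzi identities every cubic or quartic term in $k$ carries a sign or coefficient compatible with absorption into $\tfrac14 \int_\Si |k|^4$ on the left, while the $E$ term produces no more than $\int_\Si |E|^2$ on the right. The CMC hypothesis enters precisely here, since it makes the potentially bad $\tr k$-cubic contribution a bounded-coefficient term rather than a genuine new nonlinearity. The final identity $\int_\Si |E|^2 + |H|^2 = \int_\Si |\R|^2$ follows from the orthogonal decomposition $|\R|^2 = |E|^2 + |H|^2$ recorded in the appendix.
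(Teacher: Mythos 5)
Your skeleton is the same as the paper's: apply Lemma~\ref{le:elliptic-Sigma}(i) to the Hodge system \eqref{eq:Hodge-k}, rewrite the intrinsic curvature of $\Si$ via \eqref{eq:E} and \eqref{eq:constr1}, use the pointwise inequality $\tr(k^4)\ge \frac13 |k|^4$, and absorb the $E$--$k$ cross term by Young. The difference is that you keep the $\tr k$ corrections and the factor $3$ in front of $E^{mn}(k^2)_{mn}$, whereas the paper works with the identity in which these are already discarded: its displayed identity reads $3(k^2)^{mn}(E_{mn}+(k^2)_{mn})-\frac12|k|^4$, and the cross term is then estimated as $\int_\Si |E|\,|k|^2\le \int_\Si |E|^2+\frac14|k|^4$, which is precisely how the coefficient $\frac14$ on $|k|^4$ and the coefficient $1$ on $\int|E|^2$ in \eqref{eq:L2-estim-k} arise.

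However, the bookkeeping that you yourself identify as the crux does not close as written. First, keeping the factor $3$, your split $3|E||k|^2\le 3|E|^2+\frac34|k|^4$ requires absorbing $\frac34|k|^4$ into the margin $\frac12|k|^4$ produced by $3\tr(k^4)-\frac12|k|^4\ge\frac12|k|^4$; that leaves $-\frac14|k|^4$, not $+\frac14|k|^4$, and any split that does leave $\frac14|k|^4$ costs at least $9\int_\Si|E|^2$ on the right, not $\int_\Si|E|^2$. Second, the claim that $\tr k\,\tr(k^3)$ is ``controlled by a small multiple of $|k|^4$'' is false: it is cubic in $k$, so Young either produces a constant of size $(\tr k)^4|\Si|$ not present in \eqref{eq:L2-estim-k}, or one must use the favorable term $+\frac12(\tr k)^2|k|^2$ (which, note, enters with a good sign and needs no control at all), and then $3|\tr k|\,|k|^3\le\frac32\big(\epsilon(\tr k)^2|k|^2+\epsilon^{-1}|k|^4\big)$ forces $\epsilon\le\frac13$ and hence a $|k|^4$ cost of at least $\frac92$, far exceeding the available $\frac12$. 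So with all terms honestly retained your argument does not reproduce the stated constants; to land on \eqref{eq:L2-estim-k} you must either pass to the form the paper actually uses (effectively coefficient $1$ on the cross term and no $\tr k$ corrections, e.g.\ by applying the lemma to the traceless part $\hat k$, which under CMC still satisfies $\div\hat k=0$, $\curl\hat k=H$ and $\nab\hat k=\nab k$, and then tracking $|\hat k|^2=|k|^2-\frac13(\tr k)^2$) or accept weaker constants. As it stands, the final absorption step of your proposal fails.
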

In view of the energy estimate of   proposition \ref{prop:L2-curv}
we  derive,
\begin{corollary}\label{co:firstder.k}
The following estimates hold true with a constant $C$ depending only on
$\De_0$ and $t_*$,
\bea
\|\nab k(t)\|_{L^2}+\|k(t)\|_{L^4}\le C \RR_{0}
\eea
\end{corollary}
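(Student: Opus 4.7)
The plan is simply to combine the two ingredients already in hand: the pointwise elliptic identity for the Hodge system satisfied by $k$, which was just proved in Proposition \ref{prop:L2-estim-k}, together with the global energy estimate for the curvature tensor provided by Proposition \ref{prop:L2-curv}.

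More precisely, since $\tr k = t$ is constant on $\Si_t$, the pair $(\div k, \curl k) = (0, H)$ is an elliptic rank $2$ symmetric Hodge system in the sense of Lemma \ref{le:elliptic-Sigma}, and Proposition \ref{prop:L2-estim-k} already handles the positivity of the curvature terms appearing in the integration-by-parts identity by exploiting the matrix inequality $\tr(k^4)\ge \tfrac13 |k|^4$. Hence on each leaf $\Si_t$, $t_0\le t<t_*$,
\beaa
\int_{\Si_t}|\nab k|^2 + \tfrac14 |k|^4 \;\le\; \int_{\Si_t}|\R|^2 \;=\; \|\R(t)\|_{L^2}^2.
\eeaa

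On the other hand, under assumption \textbf{A2}, Proposition \ref{prop:L2-curv} supplies the uniform bound $\|\R(t)\|_{L^2}\le C(\De_0,t_*)\,\RR_0$ on the same time interval. Substituting this into the previous inequality immediately controls $\|\nab k(t)\|_{L^2}^2$ and $\|k(t)\|_{L^4}^4$ in terms of $\RR_0^2$ and the fundamental constants, which (up to adjusting the constant $C$ in accordance with the convention of Definition \ref{def:notation}) yields the stated bound. No obstacle is expected: each ingredient is already in place and the argument is purely algebraic.
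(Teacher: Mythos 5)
Your proposal is correct and is essentially identical to the paper's own argument: the corollary is obtained by inserting the uniform curvature energy bound of Proposition \ref{prop:L2-curv} into the Hodge-system estimate of Proposition \ref{prop:L2-estim-k}. Nothing further is needed.
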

\subsection{Higher derivatives estimates  for $k$}
To derive second  derivative estimates for $k$ we rewrite the curl equation in 
\eqref{eq:Hodge-k} in the form,
\beaa
\nab_i k_{jm}-\nab_j k_{im}=\in_{ij^s} H_{sm}
\eeaa
Differentiating we  obtain,
\beaa
\nab^i \nab_i k_{jm} -\nab^i \nab k_{im} =\in_{ij^s}\nab^i  H_{sm}
\eeaa
or, symbolically,
\beaa
\lap k= R \star  k + \nab H
\eeaa
where $R \star  k$ is a quadratic expression with respect to the Ricci
curvature $R$ of $\Si_t$ and $k$. Thus, since, the Ricci curvature $R$
can be expressed in  the form,
\beaa
R_{ij} -k_{ia} k^a_{\,\, j} +\tr k k_{ij} =E_{ij}
\eeaa
we derive,
\beaa
|\lap k| \le |k|^3  +|E||k|+ |\nab H|
\eeaa
Therefore,
\beaa
  \int_{\Si_t}|\lap k|^2& \le&  \int_{\Si_t}\big( |k|^6 + |E|^2 |k|^2+ |\nab H|^2\big)
  \eeaa
It is easy to see by a standard integration by parts argument that,
\beaa
\int_{\Si_t} |\nab^2 k(t)|^2\le  \int _{\Si_t}  |\lap k|^2+\int_{\Si_t}| R|^2|\nab k|^2
\eeaa
Consequently,
\beaa
  \int_{\Si_t}|\nab^2 k |^2& \le&  \int_{\Si_t}\big( |k|^6 + |E|^2 |k|^2+ |\nab H|^2 \big)+ \int_{\Si_t} |R|^2 |\nab k|^2
  \eeaa
Therefore, since $\|k\|_{L^\infty}\le \De_0$,
\bea
\|\nab^2 k(t)\|_{L^2}
&\le &\De_0^2\, \big(\|k(t)\|_{L^4}^4+\|\R\|^2_{L^2}\big)+\|\nab H(t)\|_{L^2}^2\\
\nn&+&\|R(t)\|_{L^\infty}\|\nab k(t)\|_{L^2}^2\label{last-estimate}
\eea
It is easy to see that 
$
\|\nab H(t)\|_{L^2}^2\les \|\D\R(t)\|_{L^2}$. 
Also, 
\beaa
\|R(t)\|_{L^\infty}&\le&\ |\R\|_{L^\infty}+\|k\|_{L^\infty}^2
\les \|\R(t)\|_{H^2}+\De_0^2
\eeaa
Therefore, in view of theorem \ref{mainthm-energy}
and  the bounds for  $\|\nab k(t)\|_{L^2}$ and $\|k(t)\|_{L^4}$ of  corollary \ref{co:firstder.k} we derive from  \eqref{last-estimate},
\beaa
\|\nab^2 k(t)\|_{L^2}& \le& C
\eeaa
with $C$ a constant depending only on $\De_0, t_*$ and $\RR_0$.

Differentiating once more the equation for $\lap k$ and proceeding in the same fashion we can also derive  similar bounds for the third derivatives of $k$. This proves the following.
\begin{theorem}
\label{thm-last}
The second fundamental form $k$ of the $t$ foliation  satisfies the following
estimate, for all $t_0\le t<\t_*$,
\bea
\|\nab^3 k(t)\|_{L^2}+\|\nab^2 k(t)\|_{L^2}+\|\nab k(t)\|_{L^2}+\|k(t)\|_{L^4}\le C
\eea
with $C$ a constant depending only on $\De_0, t_*$ and $\RR_0$.
\end{theorem}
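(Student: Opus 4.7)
The first three quantities on the left-hand side are already controlled: $\|\nab k(t)\|_{L^2}+\|k(t)\|_{L^4}\les \RR_0$ by Corollary \ref{co:firstder.k}, and the bound on $\|\nab^2 k(t)\|_{L^2}$ is obtained by the calculation displayed immediately before the theorem statement, which uses only $\|k\|_{L^\infty}\le \De_0$, $\|\R\|_{L^2}\les\RR_0$, and the a priori estimates from Theorem \ref{mainthm-energy}. The plan is therefore to supply the missing bound $\|\nab^3 k(t)\|_{L^2}\le C$ by differentiating the elliptic identity $\lap k=R\star k+\nab H$ one more time and applying a three-dimensional $L^2$ Hodge estimate in the spirit of Lemma \ref{le:elliptic-Sigma}.

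Concretely, I would differentiate the schematic equation $\lap k_{ij}=(R\star k)_{ij}+(\nab H)_{ij}$ and commute $\nab_a$ through $\lap$, obtaining
\[
\lap(\nab_a k)=\nab_a(R\star k)+\nab_a(\nab H)+R\star\nab k+\nab R\star k,
\]
where the last two schematic terms arise from the commutator $[\nab_a,\lap]$ acting on a symmetric $2$-tensor on $\Si_t$. An integration-by-parts argument analogous to the one used for $\|\nab^2 k\|_{L^2}$ then gives
\[
\int_{\Si_t}|\nab^3 k|^2\les\int_{\Si_t}|\lap\nab k|^2+\int_{\Si_t}|R|^2|\nab^2 k|^2,
\]
plus lower-order curvature contributions of the same structure.

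The task then reduces to estimating each factor on the right-hand side. For $\nab^2 H$ one expresses spatial derivatives of $H$, which is the Hodge-dual spatial projection of $\R$, in terms of space-time covariant derivatives modulo projection corrections, producing a schematic pointwise bound $|\nab^2 H|\les|\D^2\R|+|k||\D\R|+(|\nab k|+|k|^2)|\R|$; its $L^2$ norm is controlled by Theorem \ref{mainthm-energy}, assumption {\bf A2}, and the Sobolev-type inequalities of Corollaries \ref{corr:Sob} and \ref{cor:Sobolev} combined with Lemma \ref{le:simple-calc} to handle $\|\nab k\|_{L^4}$. The term $\nab R$ is handled through the Gauss identity $R_{ij}=E_{ij}+(k^2)_{ij}-\tr k\, k_{ij}$, which yields $|\nab R|\les|\D\R|+|k||\nab k|$, and one has $\|R\|_{L^\infty}\les\|\R\|_{H^2}+\De_0^2\les 1$ by Corollary \ref{cor:Sobolev} combined with Theorem \ref{mainthm-energy}. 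Assembling these ingredients produces a uniform bound $\|\nab^3 k(t)\|_{L^2}\les 1$ depending only on $\De_0,t_*,\RR_0$, as required.

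The main obstacle I expect is the bookkeeping of the projection-correction terms that arise when converting spatial derivatives of $H$, or of the spatial Ricci tensor $R$, into expressions involving the space-time covariant derivatives $\D\R$ and $\D^2\R$ supplied by Theorem \ref{mainthm-energy}: each projection onto $\Si_t$ produces additional factors involving $k$ and $\pih$, and one must verify that every resulting cross-term can be absorbed using either the bounds already established in this section or the $L^\infty$-to-$H^2$ Sobolev embedding of Corollary \ref{cor:Sobolev}. The underlying structure, however, is the same rank-$2$ elliptic $L^2$ estimate used to obtain $\|\nab^2 k\|_{L^2}$, applied at one higher order of differentiation.
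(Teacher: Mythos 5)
Your proposal is correct and follows essentially the same route as the paper: the bounds on $\|k\|_{L^4}$, $\|\nab k\|_{L^2}$, $\|\nab^2 k\|_{L^2}$ are exactly those of Corollary \ref{co:firstder.k} and the computation preceding the theorem, and the paper obtains the third-derivative bound precisely by differentiating the identity $\lap k=R\star k+\nab H$ once more and repeating the same elliptic/integration-by-parts argument, which is what you carry out. Your write-up simply makes explicit the commutator and projection terms that the paper leaves to the phrase ``proceeding in the same fashion,'' and all of them are absorbed, as you indicate, by Theorem \ref{mainthm-energy}, assumption {\bf A2}, and the Sobolev estimates already established.
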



\begin{thebibliography}{Mo-Se-So}
\expandafter\ifx\csname url\endcsname\relax
  \def\url#1{{\tt #1}}\fi
\expandafter\ifx\csname urlprefix\endcsname\relax\def\urlprefix{URL }\fi
\bibitem[And]{And} M. Anderson, \textit{On long-time evolution in general relativity and geometrization of 3-manifolds}, 
Comm. Math. Phys, \textbf{222} (2001), 533-567. 

\bibitem[BKM]{BKM} J. T. Beale, T. Kato, A. Majda, 
\textit{Remarks on the breakdown of smooth solutions for the 3-D Euler equations}, 
Comm. Math. Phys. \textbf{94} (1984), 61-66.

\bibitem[Br]{Br} Y. Choqu\'et-Bruhat, \textit{Theoreme d'existence pour certains
systemes d'equations aux derivees partielles nonlineaires}, Acta Math. 
\textbf{88} (1952), 141-225.

\bibitem[C-K]{C-K} D. Christodoulou, S. Klainerman, \textit{The global nonlinear stability of
the Minkowski space}, Princeton Math. Series {\bf 41}, 1993.
\bibitem[Fried]{Fried} H.G. Friedlander {\it The Wave Equation on a Curved Space-time},
Cambridge University Press, 1976.

\bibitem[HE]{HE}
Hawking, S. W. \& Ellis, G. F. R.
{\it The Large Scale Structure of Space-time}, Cambridge: Cambridge University Press,
1973.


 \bibitem[EM1]{EM1} D. Eardley, V. Moncrief, \textit{ The global
existence of Yang-Mills-Higgs fields in $4$-dimensional Minkowski space.
I. Local existence and smoothness properties},  Comm. Math. Phys. {\bf 
83} (1982), no. 2, 171--191.

\bibitem[EM2]{EM2} D,  Eardley, V. Moncrief,
\textit{ The global existence of
Yang-Mills-Higgs fields in $4$-dimensional Minkowski space. II.
Completion of proof},  Comm. Math. Phys. {\bf  83} (1982), no. 2,
193--212. 


\bibitem[HKM]{HKM} 
Hughes, T. J. R., T. Kato and J. E. Marsden
{\it Well-posed quasi-linear second-order hyperbolic systems
 with applications to nonlinear elastodynamics and general 
relativity}, Arch. Rational Mech. Anal. 63, 1977, 273-394.


\bibitem[Kl]{kl}S.~Klainerman.
\newblock {\it PDE as a unified subject},  Special Volume  GAFA (2000), 279-315.

\bibitem[Kl-Ma]{Kl-Ma} S. Klainerman, M. Machedon,
 {\it Finite Energy Solutions for the Yang-Mills
Equations in ${\Bbb R}^{1+3}$}, Annals of Math., Vol. 142, (1995),
39-119.


\bibitem[Kl-Ro1]{Causal1} S. Klainerman, I. Rodnianski, 
\textit{ Causal geometry of Einstein-Vacuum 
spacetimes with finite curvature flux}, 
 Inventiones Math. \textbf{159} (2005), 437-529.

\bibitem[Kl-Ro2]{Causal2} S. Klainerman, I. Rodnianski, 
\textit{ A geometric approach to Littlewood-Paley theory},
 {\it GAFA}, {\bf  16} (2006), 126-163. 
 
\bibitem[Kl-Ro3]{Causal3} S. Klainerman, I. Rodnianski, 
\textit{  Sharp trace theorems  for null hypersurfaces on
Einstein metrics with finite curvature flux},   {\it GAFA}, {\bf 16} (2006), 164-229.

\bibitem[Kl-Ro4]{injectivity} S. Klainerman, I. Rodnianski,
\textit{Lower bounds for the radius of injectivity
of null hypersurfaces},  to appear in Journ. A.M.S.

\bibitem[Kl-Ro5]{Kirch} S. Klainerman, I. Rodnianski,
\textit{A Kirchoff-Sobolev parametrix for  the
 wave equations in a curved space-time.},  Journ. of Hyperb. Eqts., {\bf  4} (2007), 401-433.

\bibitem[M]{M}  V. Moncrief,  \textit{An integral equation
 for space-time curvature in General Relativity}, Newton 
 Institute preprint,  NI05086-GMR.	

\bibitem[Sob]{Sob} S. Sobolev, {\it Methodes nouvelle a resoudre le probleme de Cauchy
pour les equations lineaires hyperboliques normales},
 Matematicheskii Sbornik, vol 1 (43) 1936,  31 -79.


\bibitem[Wang]{Wang} Q. Wang \textit{Causal geometry of Einstein vacuum space-times.}   PhD   thesis, Princeton University, 2006.

\end{thebibliography}
\end{document}